\newtheorem{theorem}{Theorem}[section]
\newtheorem{corollary}[theorem]{Corollary}
\newtheorem{lemma}[theorem]{Lemma}
\newtheorem{proposition}[theorem]{Proposition}
\theoremstyle{definition}
\newtheorem{Ass}{Assumption}
\DeclareMathOperator{\Var}{Var}
\newcommand{\N}{\mathbb N}
\renewcommand{\P}{\mathbb P}
\newcommand{\E}{\mathbb E}
\newcommand{\cD}{{\mathcal D}}
\newcommand{\cF}{{\mathcal F}}
\newcommand{\given}{\, \big|\, }
\title{}
\newcommand{\cercleDiscretFleches}[3]{
    % #1 = n (le dernier indice, donc n+1 sommets au total)
    % #2 = liste sommets bleus
    % #3 = liste sommets départ flèches
    
    \begin{tikzpicture}
        \def\radius{3.5} 
        \pgfmathtruncatemacro{\totalNodes}{#1 + 1}
        \pgfmathsetmacro{\nodeSize}{max(2.5, min(8, 180/\totalNodes))}
        \pgfmathsetmacro{\fontSize}{\nodeSize*1.4}
        
        \draw[thick, gray!30] (0,0) circle (\radius);
        
        \foreach \i in {0,...,#1} {
            \pgfmathsetmacro{\angle}{(\i)*(360/\totalNodes)}
            
            % Logique de couleur
            \edef\fillcolor{white}
            \foreach \j in {#2} {
                \ifnum\i=\j \xdef\fillcolor{blue!50} \fi
            }
            
            \node[circle, draw=black, fill=\fillcolor, 
                  minimum size=\nodeSize mm, 
                  inner sep=0.2pt,
                  font=\fontsize{\fontSize}{\fontSize}\selectfont] 
                (n\i) at (\angle:\radius) {\i};
        }
        
        \foreach \startNode in {#3} {
            % Calcul du sommet d'arrivée (n -> 0)
            \pgfmathsetmacro{\nextNode}{int(mod(\startNode+1, \totalNodes))}
            
            \pgfmathsetmacro{\bendAngle}{100/\totalNodes + 15}
            
            \draw[-{Stealth[length=2mm]}, blue, thick] 
                (n\startNode) to[bend right=\bendAngle] (n\nextNode);
        }
    \end{tikzpicture}
}
\begin{document}

\title[]{Universality of cutoff for independent random walks on the circle conditioned not to intersect}

%%%%%%%%%%%%%%%%%%%%%%%%%%%%%%%%%%%%%%%%%%%%%%%
%% ORCID can be inserted by command:         %%
%% \orcid{0000-0000-0000-0000}               %%
%%%%%%%%%%%%%%%%%%%%%%%%%%%%%%%%%%%%%%%%%%%%%%%
\author[]{Anna Ben-Hamou}
\author[]{Pierre Tarrago}
\address[]{Sorbonne University, LPSM \\
4 place Jussieu, 75005 Paris, France}
\email[]{anna.ben\_hamou@sorbonne-universite.fr}
\email[]{pierre.tarrago@sorbonne-universite.fr}
\begin{abstract}
In the present paper, we consider a class of Markov processes on the discrete circle which has been introduced by König, O'Connell and Roch. These processes describe movements of exchangeable interacting particles and are discrete analogues of the unitary Dyson Brownian motion: a random number of particles jump together either to the left or to the right, with trajectories conditioned to never intersect. We provide asymptotic mixing times for stochastic processes in this class as the number of particles goes to infinity, under a sub-Gaussian assumption on the random number of particles moving at each step. As a consequence, we prove that a cutoff phenomenon holds independently of the transition probabilities, subject only to the sub-Gaussian assumption and a minimal aperiodicity hypothesis. Finally, an application to dimer models on the hexagonal lattice is provided.
\end{abstract}
\maketitle

\section{Introduction}
In this paper, we study the mixing times of a class of Markov chains describing particles evolving on a discrete circle without intersecting, which is a generalization of discrete random processes introduced by König, O'Connell and Roch in \cite[Section 5]{konig2002non}. All Markov operators in this class commute and thus have the same invariant measure. These operators have a deep connection with the quantum cohomology of the Grassmannian \cite{guilhot2022quantum}, an algebra describing certain intersections of curves in the Grassmannian manifold. As a byproduct of this relation, they are explicitly diagonalizable and yield an integrable system. Using steepest descent methods and explicit combinatorial properties of the symmetric group, it is then possible to obtain effective upper and lower bounds on the total variation distance between the Markov chains at given times and their invariant measure. As a consequence, we prove that a cutoff phenomenon always occurs under sub-Gaussian and aperiodicity assumptions, independently of the local weights of the Markov chain.

\subsection{Notations and statement of the result}
For $k\leq n$ two positive integers, let $B_{k,n}$ be the set of decreasing sequences of $\llbracket 0,n-1\rrbracket$ of length $k$. An element $I=\{I(1)>\dots >I(k)\}\in B_{k,n}$ will be viewed as the positions of $k$ particles on a cycle with $n$ sites, numbered anti-clockwise from $0$ to $n-1$, and will often be associated with
\[
\xi(I)=(\omega^{I(1)-\frac{k-1}{2}},\dots, \omega^{I(k)-\frac{k-1}{2}})\, ,
\]
where $\omega=e^{i2\pi/n}$. Now for $\ell\in\llbracket 1,k\rrbracket$, let $A^{(\ell)}$ be the adjacency matrix of the oriented graph on $B_{k,n}$ corresponding to $\ell$ particles moving one step anti-clockwise on the circle without intersecting (note that if two neighboring particles are followed by an empty site, then they can simultaneously jump of one step). 

For $\ell\in \llbracket -k,-1\rrbracket$, we set $A^{(\ell)}=\,^t\left[A^{(-\ell)}\right]$, which corresponds to clockwise moves. See Figure \ref{Fig:movement} for an example of possible moves and the resulting corresponding graph in small examples. Remark that the case $\ell< 0$ was not considered in \cite{konig2002non}. Although it does not bring any technical or insightful novelty, it is necessary to consider conditioned version of the ASEP on the circle, see Section \ref{subsec:condASEP}.

\begin{figure}[h!]
\scalebox{0.8}{\cercleDiscretFleches{9}{3,4,6,7,9}{3, 4, 9}}
\hspace{1cm}
\includegraphics[scale=0.4]{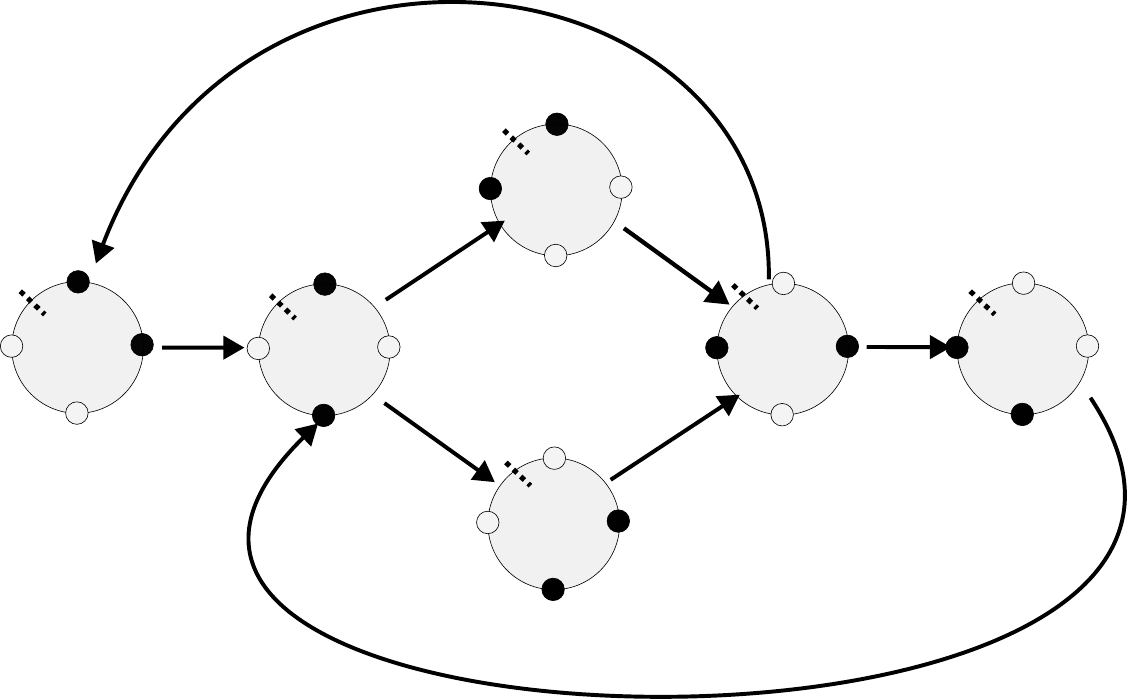}
\caption{\label{Fig:movement}Possible move corresponding to $A^{(3)}$ on $B_{5,10}$ and graph corresponding to the adjacency matrix $A^{(-1)}$ on $B_{2,4}$ (the dashed line marks the origin of the circle).}
\end{figure}

Two elements of $B_{k,n}$ will play a particular role:
\[
 I_0= (k-1,\dots,1,0)\qquad\text{and}\qquad I_1=  I_0+(1,0,\dots,0)\, .
 \]
Letting $e_\ell$ be the $\ell^{\text{th}}$ elementary polynomial, the Perron-Frobenius eigenvalue of $A^{(\ell)}$ is given by
\[
\alpha^{(\ell)}_{I_0}=e_\ell(1,\omega,\dots,\omega^{k-1})=\omega^{\frac{\ell(\ell-1)}{2}}\frac{(1-\omega^k)\dots(1-\omega^{k-\ell+1})}{(1-\omega)\dots(1-\omega^\ell)}\, ,
\]
associated with the eigenvector $\varphi_{I_0}$ given by 
\[
\varphi_{I_0}(I)=\frac{V(\xi(I))}{V(\xi(I_0))}\, ,
\]
where $V(\xi(I))$ is the Vandermonde determinant:
\[
V(\xi(I))=\prod_{i<j} \left(\omega^{I(j)}-\omega^{I(i)}\right)\, .
\]
The Doob $h$-transform of $A^{(\ell)}$ is then defined by
\[
Q^{(\ell)}(I,J)=\frac{1}{\alpha^{(\ell)}_{I_0}}\frac{\varphi_{I_0}(J)}{\varphi_{I_0}(I)} A^{(\ell)}(I,J)\, .
\]
This transition matrix approximates the dynamic of a Markov chain on $B_{k,n}$ where, at each step, $\ell$ particles make a step (anti-clockwise if $\ell\geq 1$, clockwise if $\ell\leq -1$), conditionally on the event that no collision ever occurs. All the matrices $Q^{(\ell)}$ share the same stationary distribution given by
\[
\mu(I)=\frac{|V(\xi(I))|^2}{n^k}\, \cdot 
\]

Let $p=(p_{-k},\ldots,p_{-1},p_0,p_1,\dots,p_{k})$ be a probability vector $\llbracket -k,k\rrbracket$. We are interested in the mixing time of the Markov chain on $B_{k,n}$ with transition kernel
\begin{equation}\label{eq:def-P}
P=\sum_{\ell=-k}^{k} p_\ell Q^{(\ell)}\, ,
\end{equation}
where $Q^{(0)}$ is the identity matrix. The eigenvalues are given by
\[
\lambda_J=\sum_{\ell=-k}^{k}p_\ell\frac{e_\ell(\xi(J))}{e_\ell(\xi(I_0))}
\]
(see section~\ref{sec:spectral-gap}) and we introduce the spectral parameter 
\begin{equation}\label{eq:def_spectral_gap}
\gamma=1-|\lambda_{I_1}|.
\end{equation}
Beware that $\gamma$ may not be the spectral gap of the random walk, depending on how the random walk is close to being periodic. However, under our assumptions, $\gamma$ will give the main contribution to the mixing time, see Theorem \ref{thm:main}. Assuming that the gcd of the support of $p$ is equal to $1$, the chain $P$ is irreducible and aperiodic. This implies that, given any initial probability distribution $\mu_0$ on $B_{k,n}$, the distribution $\mu_0 P^t$ converges towards $\mu$ as $t$ goes to infinity. Let us introduce the quantity 
$$\mathcal{D}(t)=\sup_{\mu_0\in\mathcal{P}\left(B_{k,n}\right)}d_{TV}\left(\mu_0P^t,\mu\right)\, .$$
This quantity quantifies the convergence towards the stationary distribution and is the main object of study of the present manuscript. For $\epsilon>0$, introduce the mixing time 
$$t_{\epsilon}=\inf(t\geq 0, \mathcal{D}(t)\leq \epsilon).$$
We will consider the following regime. In what follows, $X$ denotes a random variable with distribution~$p$.
%\begin{itemize}
%\item the number of particles $k$ tends to $+\infty$;
%\item the probability $p_0$ to stay on the same position is strictly positive and uniformly bounded away from $1$: $\sup_{n\geq 1} p_0<1$;
%\item the probability $p_k$ to move all the particles is uniformly bounded away from $1$: $\sup_{n\geq 1} p_k<1$;
%\end{itemize}
\begin{Ass}[Number of particles]\label{ass:number-particles}
There exists $\eta\in (0,1/2)$ such that 
\[
\eta\leq \frac{k}{n}\leq 1-\eta\, .
\]
\end{Ass}

\begin{Ass}[Expectation]\label{ass:far-k}
There exists $\delta>0$ such that 
\[
\delta\leq \E[|X|]\leq  (1-\delta)k\, .
\]
\end{Ass}
Due to the symmetries of the system, the upper bound of the latter assumption could be replaced by $k-\delta$ up to small changes in the proofs, see the second paragraph of Section \ref{subsec:related_work}.
\begin{Ass}[Sub-Gaussianity]\label{ass:X-subgaussian}
There exists $K_{g}>0$ such that
$( |X|-\mathbb{E}[|X|])$ is sub-Gaussian with parameter $K_g\sqrt{\E[|X|]}$.
\end{Ass}
We guess that this assumption may be weaken in the case where $\E[|X|]$ is of order smaller than $k$. For example, one expects that a sub-exponentiality condition instead of sub-gaussianity (with the same order of parameters) would suffice if $\E[|X|]$ is of order $1$.

\begin{Ass}[Aperiodicity]\label{ass:X-aperiodicity}
There exist $K_{a}>0$ such that the Fourier transform $\Phi_p$ of the variable $X$ satisfies, for all $\theta\in [-\pi,\pi]$,
\[\left\vert\Phi_{p}(\theta)\right\vert\leq 1-K_{a}\min\{\E[|X|]\theta^2\, ,\, 1\}\, .
\]
 \end{Ass}
The latter assumption means that the random walk $X$ must be aperiodic and have a variance of order at least $\E[|X|]$. 

For $\eta\in (0,1/2)$ and $\delta,K_g>0$, let us denote by $\mathcal{A}(\eta,\delta,K_g,K_a)$ the set of sequences of kernels $(P^{(n)})_{n\geq 1}$ of the form~\eqref{eq:def-P} such that for all $n\geq 1$,  Assumption~\ref{ass:number-particles} is satisfied with constant $\eta$, and the probability distribution $p$ over $\{-k,\dots,k\}$ satisfies Assumption~\ref{ass:far-k} with constant $\delta$, Assumption~\ref{ass:X-subgaussian} with parameter $K_g$, and Assumption~\ref{ass:X-aperiodicity} with parameter $K_a$.

\begin{theorem}\label{thm:main}
Let $\eta\in (0,1/2)$ and $\delta,K_g,K_a>0$. Any sequence of kernels belonging to $\mathcal{A}(\eta,\delta,K_g,K_a)$ satisfies
$$\lim_{s\rightarrow +\infty}\liminf_{n\rightarrow +\infty}\mathcal{D}\left(\frac{1}{\gamma}(\log n-s)\right)=1\text{ and }\lim_{s\rightarrow +\infty}\limsup_{n\rightarrow +\infty}\mathcal{D}\left(\frac{1}{\gamma}(\log n+s)\right)=0.$$
\end{theorem}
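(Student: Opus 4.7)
My plan is to prove the matching lower and upper bounds on $\mathcal{D}(t)$ around the critical time $\frac{1}{\gamma}\log n$ by exploiting the explicit simultaneous diagonalization of the family $\{Q^{(\ell)}\}$. Since all the $Q^{(\ell)}$ commute, they share a common eigenbasis indexed by $J\in B_{k,n}$, and the kernel $P$ inherits the eigenvalues $\lambda_J = \sum_\ell p_\ell e_\ell(\xi(J))/e_\ell(\xi(I_0))$. The entire analytic content of the theorem will come from controlling the moduli $|\lambda_J|$ finely enough, as a function of $J$, that the spectral parameter $\gamma = 1-|\lambda_{I_1}|$ dictates the cutoff window, and then from translating this spectral information into total variation estimates.

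For the upper bound, I would start from an $L^2$ inequality of the form
\[
4\,\|\delta_I P^t - \mu\|_{TV}^2 \;\leq\; \sum_{J\neq I_0} |\lambda_J|^{2t}\, c_J(I),
\]
where $c_J(I)$ is an explicit eigenbasis-normalization factor coming from the Vandermonde form of the right eigenvectors $\varphi_J$. The heart of the argument is then to bound $\sum_{J\neq I_0} |\lambda_J|^{2t} c_J(I) \leq e^{-2s}$ at $t=\frac{1}{\gamma}(\log n + s)$. To achieve this, I would (i) rewrite each $\lambda_J$ in terms of the $k$ angular parameters $\theta_i = \tfrac{2\pi}{n}(J(i)-I_0(i))$ using the generating function $\prod_i (1+\xi(J)_i z) = \sum_\ell e_\ell(\xi(J)) z^\ell$; (ii) use Assumption~\ref{ass:X-subgaussian} to obtain a Gaussian-type bound $|\lambda_J|\leq \exp(-c\,\E[|X|]\sum_i \theta_i^2)$ in a neighborhood of $J=I_0$; (iii) use Assumption~\ref{ass:X-aperiodicity} to keep $|\lambda_J|$ uniformly bounded away from $1$ for $J$'s outside that neighborhood; and (iv) finally perform the summation over $J\in B_{k,n}$, which, in the Young-diagram parametrization, reduces to a Gaussian sum whose total mass is of order $(\E[|X|]\, t)^{-k/2}$ and translates, after matching with the form of $\gamma$, into the required $e^{-s}$ decay.

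For the lower bound, I would use a distinguishing statistic built from eigenfunctions of modulus $|\lambda_{I_1}|$. The natural candidate is $f=\varphi_{I_1}$, together with the other eigenvectors whose eigenvalue has modulus $|\lambda_{I_1}|$ (the degeneracies are finitely many and come from rotations of the circle); a rotation-symmetrized combination produces a real-valued test function $f$ such that $\E_\mu[f]=0$ while $\E_{\delta_{I_0} P^t}[f] = \lambda_{I_1}^t f(I_0)$. Using $|f(I_0)|$ of order $\sqrt{n}$ (to be extracted from the Vandermonde formula) and $\Var_\mu(f)$ of constant order (from orthogonality of the eigenbasis with respect to $\mu$), a standard Chebyshev/Paley-Zygmund argument at $t=\frac{1}{\gamma}(\log n - s)$ yields an expectation shift of order $e^{s/2}$ times the standard deviation, and hence a TV distance tending to $1$ as $s\to\infty$.

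The main obstacle will be item (iv) of the upper bound: converting the combinatorial/algebraic formula for $\lambda_J$ into a uniform analytic control across the whole spectrum, including the "intermediate" $J$'s where the Gaussian approximation is no longer sharp but the aperiodicity estimate is not yet saturated. This is the step where Assumption~\ref{ass:X-subgaussian} has to be combined with careful steepest-descent bookkeeping on ratios of elementary symmetric polynomials evaluated at roots of unity, and where the normalization factor $c_J(I)$ must be shown not to spoil the Gaussian concentration. The lower bound is more mechanical once the eigenfunction identifications are made, but requires verifying that no spurious cancellations occur when one symmetrizes over eigenvalues of modulus $|\lambda_{I_1}|$.
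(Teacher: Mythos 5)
Your overall architecture (simultaneous diagonalization, the $\ell^2$ upper bound $4\cD(t)^2\le\sum_{J\ne I_0}|d(J)|^2|\lambda_J|^{2t}$, and an eigenfunction-based second-moment lower bound) is the same as the paper's, but both halves of your sketch have genuine gaps exactly where the work lies. For the lower bound: the normalization is off — for the $\mu$-normalized eigenfunction one has $f_{I_1}(I_0)=|V(\xi(I_1))|/|V(\xi(I_0))|=\sin(k\pi/n)/\sin(\pi/n)\asymp n$, not $\sqrt n$; with your claimed $\sqrt n$ the mean shift at $t=\frac1\gamma(\log n-s)$ is $(1-\gamma)^t\sqrt n\asymp e^{s}/\sqrt n\to0$, so the statistic does not separate the two laws at all. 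More importantly, controlling $\Var_\mu(f)$ is not enough: Paley--Zygmund requires a bound on the variance of $f(X_t)$ under $\P_{I_0}$ at time $t$, which a priori could swamp the mean shift. The paper closes this precisely by Pieri's rule, $S_{I_1}\overline S_{I_1}=S_{I_2}+S_{I_0}$, together with the computation $\lambda_{I_2}=1-2\gamma(1+O(1/n))$ (Lemma~\ref{lem:lambda_I2}), so that $\E_{I_0}[|f(X_t)|^2]$ decays at essentially the same rate $(1-\gamma)^{2t}$ as the squared mean; this second-eigenvalue identification is absent from your plan and cannot be bypassed by stationary-variance considerations.

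For the upper bound, your step (ii) is incorrect in form: near $I_0$ the decay of $|\lambda_J|$ is \emph{linear}, not quadratic, in the displacement. Writing $|\tau^J|$ for the total transport displacement of $J$ from $I_0$, the paper shows $|\lambda_J|=e^{-\gamma|\tau^J|(1+O(1/\log k))}$ (Proposition~\ref{prop:estimates_J}), i.e.\ exponent $\asymp \E[|X|]\sum_i|\theta_i|/n$; a bound of the form $\exp(-c\,\E[|X|]\sum_i\theta_i^2)$ is actually false once a particle is displaced by more than $O(1)$ steps (it would force $|\lambda_J|\le e^{-c'\gamma j^2}$ while the truth is $e^{-\gamma j(1+o(1))}$), and conversely your step (iii) fails because configurations with a single particle displaced by $j$ steps have $|\lambda_J|=1-O(\gamma j)$, arbitrarily close to $1$ at all scales $1\le j\lesssim n^2$ — aperiodicity has nothing to do with them (its actual role is only to kill the $n$ rotation copies of $I_0$, via the $\Phi_p(2\pi m/n)$ factors). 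Moreover the constant in the exponent must be sharp: the eigenfunction factors $d(J)^2$ grow like $e^{2|\tau^J|\log k}$ and at $t=\frac1\gamma(\log n+s)$ are cancelled only at leading order by $e^{-2\gamma t|\tau^J|}=e^{-2(\log n+s)|\tau^J|}$, leaving the partition generating function $\Gamma(2s+O(1))$ — not a Gaussian sum of mass $(\E[|X|]t)^{-k/2}$, which is inconsistent with the true answer of order $e^{-2s}$. Obtaining this $1+O(1/\log k)$ precision uniformly over $\ell$ and over the whole spectrum (the classes $\mathcal J_1,\mathcal J_2,\mathcal J_3$, steepest descent for large $\ell$, Newton's identities for small $\ell$, sub-Gaussian averaging of $\gamma_X$ over $\ell$) is the heart of the paper and is still missing from your plan, as your final paragraph partly concedes.
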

This theoreom is a consequence of Proposition \ref{prop:lower-bound}, Proposition \ref{prop:upper_bound_1} and \eqref{eq:inequality_l2_l1}. According to Theorem \ref{thm:main}, any sequence of kernels belonging to $\mathcal{A}(\eta,\delta,K_g,K_a)$ exhibits the cutoff phenomenon at time $\tfrac{\log n}{\gamma}$ and with window of order $\tfrac{1}{\gamma}$. As a corollary, we have the following equivalent for the mixing time.

\begin{corollary}\label{cor:mixing_time}
Let $\eta\in (0,1/2)$ and $\delta,K_g,K_a>0$. Any sequence of kernels belonging to $\mathcal{A}(\eta,\delta,K_g,K_a)$ satisfies
$$t_{\epsilon} \underset{n\to +\infty}{\sim}\frac{\log n}{\gamma}\, .$$
\end{corollary}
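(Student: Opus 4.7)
The plan is to derive Corollary~\ref{cor:mixing_time} as a direct consequence of Theorem~\ref{thm:main} combined with the standard monotonicity of the function $t\mapsto \mathcal{D}(t)$. The first ingredient I need is that $\mathcal{D}$ is non-increasing in $t$: for any initial distribution $\mu_0$, the contraction property of the Markov kernel $P$ with respect to the total variation distance gives $d_{TV}(\mu_0 P^{t+1},\mu) = d_{TV}(\mu_0 P^t P,\mu P) \leq d_{TV}(\mu_0 P^t,\mu)$, and after taking the supremum over $\mu_0$ one gets $\mathcal{D}(t+1)\leq\mathcal{D}(t)$. The argument then extends by an elementary interpolation argument to any real-valued times of the form appearing in the theorem.

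With monotonicity in hand, fix $\epsilon\in(0,1)$. By the second limit in Theorem~\ref{thm:main}, I can pick $s = s(\epsilon)>0$ large enough so that $\limsup_{n\to\infty}\mathcal{D}\bigl(\gamma^{-1}(\log n + s)\bigr) < \epsilon$; hence for all $n$ large enough one has $\mathcal{D}(\gamma^{-1}(\log n + s))\leq \epsilon$, which by the definition of $t_\epsilon$ gives the upper bound $t_\epsilon \leq \gamma^{-1}(\log n + s)$. Symmetrically, using the first limit I pick $s' = s'(\epsilon)>0$ large enough that $\liminf_{n\to\infty}\mathcal{D}(\gamma^{-1}(\log n - s'))>\epsilon$, and monotonicity combined with the definition of $t_\epsilon$ yields $t_\epsilon \geq \gamma^{-1}(\log n - s')$ for all large $n$.

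To conclude, note that $|\lambda_{I_1}|\leq 1$ forces $\gamma\leq 1$, so $\gamma^{-1}\log n \geq \log n \to +\infty$. Dividing the two-sided estimate by $\gamma^{-1}\log n$ yields
\[
1 - \frac{s'}{\log n} \;\leq\; \frac{t_\epsilon \gamma}{\log n} \;\leq\; 1 + \frac{s}{\log n}\, ,
\]
and since $s,s'$ depend only on $\epsilon$, letting $n\to+\infty$ gives $t_\epsilon \sim \log n/\gamma$ as claimed. There is no real obstacle here: the corollary is essentially a bookkeeping translation of the cutoff window statement of Theorem~\ref{thm:main} into a mixing-time asymptotic, and the only mild care is ensuring that the monotonicity of $\mathcal{D}$ is invoked correctly so that the upper threshold $\gamma^{-1}(\log n + s)$ really provides an upper bound on $t_\epsilon$.
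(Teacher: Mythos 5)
Your proposal is correct and matches the paper's (implicit) argument: the paper offers no separate proof of Corollary~\ref{cor:mixing_time}, treating it as the standard consequence of the cutoff statement in Theorem~\ref{thm:main}, exactly as you do via monotonicity of $\mathcal{D}$ and the two-sided bounds $\gamma^{-1}(\log n-s')\leq t_\epsilon\leq \gamma^{-1}(\log n+s)$ with $s,s'$ depending only on $\epsilon$. The only cosmetic point is that the cancellation of $\gamma$ in the ratio makes the remark $\gamma\leq 1$ unnecessary except to absorb integer rounding, which is anyway $o(\log n/\gamma)$.
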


\subsection{Related work}\label{subsec:related_work}
The transition matrices $Q^{(\ell)}$ can be understood as follows: consider the dynamics of $k$ particles on the circle where at each step $\ell$ particles make a step in the same direction (either all clockwise or all anti-clockwise). Then condition on the very unlikely event that, during the whole process, no particles ever collide. The trajectory of this conditioned process can then be approximated by a Markov chain with transition matrix $Q^{(\ell)}$. The stationary distribution of this process is an instance of hyper-uniform distributions: it is a discrete version of the Circular $\beta$-Ensemble for $\beta=2$. 

There is a much broader class of interacting particles that generalizes the cases studied in this paper. As previously stated, the transition matrices considered here are deeply related to the elementary symmetric functions $e_{\ell}$ with $0\leq \ell\leq n$. Similarly, there exists a transition operator for any Schur function whose associated Young diagram fits into a rectangle of size $k\times n-k$, and for each of these transition operators, a stationary distribution is the discrete Circular Unitary Ensemble, see \cite{guilhot2022quantum}. The reason behind this fact is that they generate a commutative algebra called the quantum cohomology of the Grassmannian, see \cite{bertram}. As a particular case of this that we did not pursue for the sake of clarity, a straightforward generalization would be to also consider a shift of all particles by more than one step; such a generalization allows for the removal of the upper bound in Assumption \ref{ass:far-k}.

Actually, the processes we are considering are discrete versions of the circular unitary Brownian motion whose stationary distribution is the Circular Unitary Ensemble. It has been shown in \cite{meliot2014cut} that this process exhibits a cutoff phenomenon as the number of particles tends to infinity. Many more processes are considered in \cite{meliot2014cut} for various Brownian motions in symmetric space coming from different simple Lie algebras. Discrete versions of such continuous processes also exist for the orthogonal and symplectic groups, see \cite[Section 7]{guilhot2023homology}, although no mixing times are known in these cases.

Theorem~\ref{thm:main} can be seen as a universality result for the occurrence of cutoff: under mild conditions on the distribution $p$ of the number of moved particles, the corresponding chain exhibits cutoff, at a time that depends on this distribution only through the induced spectral gap. Such universality phenomena in the mixing behavior of interacting particle systems have already been highlighted. For instance, the result of Salez~\cite{salez2023universality} establishes the universality of cutoff for the zero-range process with bounded monotone rates.

In the case where $p$ is supported on $\{-1,0,1\}$, it is interesting to compare our result with the Simple Exclusion Process, either symmetric or asymmetric. In this process, at each step, a particle attempts to move left or right. If the targeted site is occupied, then the move is censored and the chain stays in the same configuration. The stationary distribution is then the uniform distribution on configurations. In the symmetric case with $k=\alpha n$ particles, $\alpha\in (0,1)$, the result of Lacoin~\cite{lacoin2017simple,lacoin2016cutoff} shows that the process has cutoff at time $\tfrac{n^3\log n}{4\pi^2}$. Recently, Schmid and Sly~\cite{schmid2022mixing} showed the totally asymmetric exclusion process on the circle mixes in time of order $n^{5/2}$, without cutoff. In comparison, the chains described below in sections~\ref{subsec:constant} (with $k_0=1$) and~\ref{subsec:condASEP}, both corresponding to one single particle moving at a time, have cutoff at a time of order $n^2\log n$. Stationarity is thus attained much shortly than for simple exclusion processes. Note that the invariant distribution $\mu$ can be simulated by generating a random matrix in the unitary group (for example by diagonalizing a random matrix from the Gaussian Unitary Ensemble) and outputting its eigenvalues. Typically, this method has complexity $n^3$. Hence, provided that simulating a transition from the chains described in~\ref{subsec:constant} and~\ref{subsec:condASEP} has time complexity much smaller than $n$, then those chains can provide an efficient way to simulate from~$\mu$. Also note that for chains $P$ of the form~\eqref{eq:def-P} with a spectral gap~$\gamma$ of order~$n^{-1}$ such as the periodic dimer model described in section~\ref{subsec:dimer}, the mixing time can be as small as order~$n\log n$. 

\subsection{Structure of the paper and specific notations} Apart from the next subsection which is provides applications of Theorem \ref{thm:main}, the remaining sections are devoted to the proof of this theorem. Section \ref{sec:spectral-gap} gives precise estimates on the spectral gap $\gamma$ which plays a prominent role in the statement of the theorem. We also relate in this section $\gamma$ to each individual spectral gap $\gamma_{\ell}$ corresponding to the matrix $Q^{(\ell)}$. Section \ref{sec:lower-bound} is then devoted to the proof of the lower bound of Theorem \ref{thm:main}, which is embodied by Proposition \ref{prop:lower-bound}. Section \ref{sec:upper-bound} provides then the upper bound of the theorem in Proposition \ref{prop:upper_bound_1}. The proof of this proposition relies on many estimates on the individual eigenvalues $\lambda_{J}^{\ell}$ of the matrices $Q^{(\ell)}$. A summary of these estimates is given in Proposition \ref{prop:estimates_J} of this section. The proof of the estimates in the case where $\ell$ is larger than a power of $\log n$ is given in Section \ref{sec:estimates_superlog}, while the proof in the case of $\ell$ being of order a power of $\log n$ or smaller is given in Section \ref{sec:estimates_lowerlog}.

Throughout this paper, we use several constants $c,C>0$ which are always assumed to depend at most only on $\delta,\eta, K_g$ and $K_a$. We then write $A\asymp B$ to say that $c\leq \frac{A}{B}\leq C$ for some constant $c,C>0$ and $O(B)$  to denote a quantity $A\in\mathbb{C}$ such that $\vert A\vert \leq C\vert B\vert$ for some constant $C>0$. To emphasize the fact that $A,B\in \mathbb{R}$, we write $O_{\mathbb{R}}(B)$ instead of $O(B)$.
\subsection{Application to simple models}
\subsubsection{Constant Markov kernel}\label{subsec:constant}
Let $p=(p_{-k_0},\ldots,p_{k_0})$ be a probability vector on $\llbracket -k_0,k_0\rrbracket$ generating an aperiodic random walk on $\mathbb{Z}$, and set $p^{(n)}=p$ for $k\geq k_0, n\geq k$. Then, $p^{(n)}$ satisfies Assumptions~\ref{ass:far-k}, \ref{ass:X-subgaussian} and~\ref{ass:X-aperiodicity}. Moreover, by Proposition \ref{prop:spectral-gap}, as $k,n\rightarrow +\infty$ with $\frac{k}{n}\rightarrow \theta\in ]0,1[$; the corresponding spectral gap $\gamma$ satisfies 
\begin{align*}
\gamma=&2\left(1+O\left(\frac{1}{n}\right)\right)\frac{\sin\left(\tfrac{\pi}{n}\right)}{\sin\left(\tfrac{k\pi}{n}\right)}\sum_{\ell=1}^{k_0}(p_\ell+p_{-\ell}) \sin\left(\tfrac{\ell\pi}{n}\right)\sin\left(\tfrac{(k-\ell)\pi}{n}\right)=2\left(1+O\left(\frac{1}{n}\right)\right)\frac{\pi^2\mathbb{E}\left[\left\vert X\right\vert\right]}{n^2}.
\end{align*}
Hence, by Corollary \ref{cor:mixing_time}, for all $\epsilon>0$,
$$t_{\epsilon}\underset{n\rightarrow +\infty}{\sim}\frac{n^2\log n}{2\pi^2\mathbb{E}\left[\left\vert X\right\vert\right]}.$$

\subsubsection{Conditioned ASEP}\label{subsec:condASEP} 
Let $\alpha,\beta\geq 0$ be such that $\alpha+\beta>0$ and set $\epsilon=1-\frac{\alpha+\beta}{k}$. Let $k\geq 1$ be such that $\epsilon>0$ and set $A=\frac{1}{k}\alpha A^{(-1)}+\epsilon A^{(0)}+\frac{1}{k}\beta A^{(1)}$. The matrix $A$ is the transition matrix of $k$ particles moving on a circle of size $n$ independently with kernel $\frac{1}{k}\alpha\delta_{-1}+\epsilon\delta_{0}+\frac{1}{k}\beta\delta_{1}$ and killed when intersecting. Then, setting 
$$\alpha_{I_0}=\frac{1}{k}\alpha\overline{e_{1}}(\xi_{n}(I_0)))+\epsilon+\frac{1}{k}\beta e_{1}(\xi_{n}(I_0)))=(\alpha+\beta)\frac{\sin\left(\frac{k\pi}{n}\right)}{k\sin\left(\frac{\pi}{n}\right)}+\epsilon,$$
the matrix $P=\frac{1}{\alpha_{I_0}}\Delta^{-1}A\Delta$
is the transition matrix of the Doob's h-transform of the killed process, which is a natural form of conditioning the particles to never intersect each other. Remark that we can write 
$$P=p_{-1}Q^{(-1)}+p_0Q^{(0)}+p_{1}Q^{(1)}$$
with 
$$p_{-1}=\frac{\alpha \overline{e_{1}}(\xi_{n}(I_0)))}{k\alpha_{I_0}}=\frac{\alpha}{\alpha+\beta+\epsilon\frac{k\sin\left(\frac{\pi}{n}\right)}{\sin\left(\frac{k\pi}{n}\right)}}, \quad p_{0}=\frac{\epsilon}{(\alpha+\beta)\frac{\sin\left(\frac{k\pi}{n}\right)}{k\sin\left(\frac{\pi}{n}\right)}+\epsilon},\quad p_{1}=\frac{\beta }{\alpha+\beta+\epsilon\frac{k\sin\left(\frac{\pi}{n}\right)}{\sin\left(\frac{k\pi}{n}\right)}}.$$

 Set $p(n)=p$ for $n\geq 1$. Then, $p$ satisfies Assumptions~\ref{ass:far-k}, \ref{ass:X-subgaussian} and~\ref{ass:X-aperiodicity}. Let $\theta\in]0,1[$. Then, by Theorem \ref{thm:main}, as $k,n\rightarrow +\infty$ with $\frac{k}{n}\rightarrow \theta$,
$$\liminf_{s\rightarrow -\infty}\liminf_{n\rightarrow +\infty}\mathcal{D}\left(\frac{1}{\gamma}(\log k-s)\right)=1\text{ and }\limsup_{s\rightarrow +\infty}\limsup_{n\rightarrow +\infty}\mathcal{D}\left(\frac{1}{\gamma}(\log k+s)\right)=0$$
with, using Proposition \ref{prop:spectral-gap} below,
$$\gamma=2\left(1+O\left(\frac{1}{n}\right)\right)\frac{\pi^2}{n^2}(p_{-1}+p_{1}).$$
Since 
$$p_{-1}+p_{1}=\frac{\alpha+\beta}{\alpha+\beta+\frac{\theta\pi}{\sin\left(\theta \pi\right)}}\left(1+O\left(\frac{1}{n}\right)\right),$$
we deduce that for all $\epsilon>0$, the mixing time $t_{\epsilon}=\min(t\geq 1, \mathcal{D}(t)\leq \epsilon)$ is asymptotically given by 
$$t_{\epsilon}\sim\frac{n^2\log n}{2\pi^2}\left(1+\frac{\theta\pi}{(\alpha+\beta)\sin(\theta \pi)}\right).$$
Remark that the mixing time only depends on $\alpha+\beta$. The mixing behaviour of the conditioned ASEP is thus very different from the reflected ASEP, for which the mixing time depends on both $\alpha$ and $\beta$. In the specific case of TASEP, where $\alpha=0$, there is still a cut-off phenomenon in the conditioned version while it has been proven in that there is no cut-off in the reflected version, see \cite{schmid2022mixing}.

\subsubsection{Periodic dimer model on the hexagonal lattice}\label{subsec:dimer} 
In this section, we give an application of our results to the case of dimer models on the hexagonal lattice in the case of an infinite cylinder. See \cite{kenyon} for a general introduction to dimer models and more specifically \cite[Section 4.2]{kenyon} for the hexagonal model we are considering. The relation between this model and non-colliding random walk is known for a long time, see \cite{wu1968remarks}. In our case, the corresponding random walk is called the noncolliding Bernoulli random walk on the circle, see \cite{konig2002non} (see also \cite{gorin2019universality} for the non circular version): this particular random walk is included in the class of Markov processes of the present paper and corresponds to the choice of probability distribution $p_{\ell}=\binom{k}{\ell}p^\ell(1-p)^{k-\ell}$ for some $0\leq p\leq 1$. .

Let us briefly recall the definition of the dimer model on the hexagonal lattice and its relation to the non-colliding Bernoulli random walk. Denote by $H_{n,m}$ the doubly periodic hexagonal lattice of length $m$ and width $n$, whose set of vertices is given by $V_{n,m}=\{ai+be^{-i\pi/6}, a,b\in \mathbb{Z},a+b\not=-1\mod 3\}/\{n\sqrt{3}\mathbb{Z}+m\frac{3}{2}i\mathbb{Z}\}$ and there is an edge from $v$ to $w$ is $\vert v-w\vert=1$. We denote by $E$ the edge set of $H_{n,m}$ and each $e=\{v,w\}\in E$ has a natural orientation $\ell\in\{0,1,2\}$ given by the 
condition of $w-v\in\mathbb{R}e^{i\pi/2+2\ell i\pi/3}$. For $\ell\in\{0,1,2\}$, let us denote by $E_\ell$ the subset of edges of orientation $\ell$.

A dimer configuration on $H_{n,m}$ is a subset $\mathcal{E}\subset E$ such that each $v\in V$ is adjacent to exactly one element of $\mathcal{E}$. For any vector $a=(a_{0},a_{1},a_{2})\in\mathbb{R}_{\geq 0}$, one associates to each dimer configuration $\mathcal{E}\subset E$ a weight 
$$w(\mathcal{E})=\prod_{\ell\in\{0,1,2\}}a_\ell^{\#\mathcal{E}\cap E_{\ell}}.$$
This turns the set $\Sigma$ of dimer configurations on $H_{n,m}$ into a probability space with 
$$\mathbb{P}_{a}(\mathcal{E})=\frac{w(\mathcal{E})}{Z_{n,m}(a)},$$
where $Z_{n,m}(a)$ is the partition function of the system.

For $0\leq t\leq m$, denote by $V_{t}\subset V$ the slice 
$$V_t=\left\{v\in V, \Im v=\frac{3}{2}t\right\}=\frac{3t}{2}i+\sqrt{3}(\mathbb{Z}/n\mathbb{Z}).$$
By the latter description, each set $V_{t}$ is isomorphic to $\mathbb{Z}/n\mathbb{Z}$ by an isomorphism $\Phi$ sending $\frac{3t}{2}i+(r\mod[n])\sqrt{3}$ to $(r+t)[n]$.

Let us denote by $E_t$ the set of edges of type $1$ or $2$ adjacent to elements of  $V_t$. The graph structure yields that for any dimer configuration $\mathcal{E}$, the number 
$$k=\mathcal{E}\cap E_t$$
is independent of $1\leq t\leq m$. Moreover, the full configuration $\mathcal{E}$ is uniquely defined by the data of $\bigcup_{t=1}^mE_t$. In particular, if we set $\mathcal{J}_{t}\subset V_t$ the subset of vertices adjacent to edges of $E_t$, then the sequence $(\mathcal{J}_t)_{0\leq t\leq m-1}$ fully determines $\mathcal{E}$. Since $\#V_t=k$ for all $0\leq t\leq m-1$, the sequence $\Psi(\mathcal{E}):=(J_t)_{0\leq t\leq m-1}=(\Phi(\mathcal{J}_t))_{0\leq t\leq m-1}$ is a process on $B_{k,n}$.

For any two configurations $I_1,I_2\in B_{k,n}$, one has 
$$\sum_{\substack{\mathcal{E}\in\Sigma\\ \Psi(\mathcal{E})_0=I_1,\Psi(\mathcal{E})_t=I_2}}w(\mathcal{E})= \left[A^t\right]_{I_1,I_2}\left[A^{m-t}\right]_{I_2,I_1},$$
with $A=\sum_{\ell=0}^{k}a_1^{\ell}a_2^{k-\ell}A^{(\ell)}$. Then, as $m$ goes to infinity, an asymptotic computation yields that 
$$\mathbb{P}_a(J_0=I_1,J_t=I_2)=(P^T)^t_{I_1I_2},$$
where $P=\frac{1}{\alpha_{I_0}}\Delta^{-1}A\Delta$.

\begin{corollary}
Let $\epsilon >0,\eta,\delta\in(0,1)$. Then, uniformly on $k\in [n\eta,n(1-\eta)]$ and $a_1,a_2\in\left[\delta,\frac{1}{\delta}\right]$,
$$t_{\epsilon}\underset{n,k\rightarrow +\infty}{\sim}\frac{nr\log n}{4\pi a_1\sin\left(\frac{k\pi}{n}-\theta\right)},$$
where $r,\theta$ are such that $a_2+a_1e^{\frac{i\pi}{n}}=re^{i\theta_0}$.
\end{corollary}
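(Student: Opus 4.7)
The plan is to apply Corollary~\ref{cor:mixing_time} to the sequence of dimer transfer matrices $P=\frac{1}{\alpha_{I_0}}\Delta^{-1}A\Delta$ with $A=\sum_{\ell=0}^k a_1^\ell a_2^{k-\ell}A^{(\ell)}$. Three tasks are required: identify the associated probability distribution $p_\ell$, verify that the sequence $(P^{(n)})_{n\geq 1}$ belongs to $\mathcal{A}(\eta,\delta,K_g,K_a)$, and compute the spectral parameter $\gamma=1-|\lambda_{I_1}|$ explicitly.

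First I identify $p_\ell$. From $\Delta^{-1}A^{(\ell)}\Delta=\alpha^{(\ell)}_{I_0}Q^{(\ell)}$ one reads off $p_\ell=a_1^\ell a_2^{k-\ell}\alpha^{(\ell)}_{I_0}/\alpha_{I_0}$, and the generating function $\sum_\ell t^\ell e_\ell(1,\omega,\ldots,\omega^{k-1})=\prod_{j=0}^{k-1}(1+t\omega^j)$ provides both $\alpha_{I_0}=\prod_{j=0}^{k-1}(a_2+a_1\omega^j)$ and the telescoping product form of the Fourier transform of $X$,
\[
\Phi_p(\theta)=\sum_{\ell}p_\ell e^{i\theta\ell}=\prod_{j=0}^{k-1}\frac{a_2+a_1 e^{i\theta}\omega^j}{a_2+a_1\omega^j}\, .
\]
The same manipulation, applied to $\lambda_{I_1}=\sum_\ell p_\ell e_\ell(\xi(I_1))/e_\ell(\xi(I_0))$ together with the fact that $I_1$ differs from $I_0$ as a set only by replacing $k-1$ by $k$, yields
\[
\lambda_{I_1}=\frac{\prod_{j\in I_1}(a_2+a_1\omega^j)}{\prod_{j\in I_0}(a_2+a_1\omega^j)}=\frac{a_2+a_1\omega^k}{a_2+a_1\omega^{k-1}}\, .
\]

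To verify the assumptions of Theorem~\ref{thm:main}, the product structure of $\Phi_p$ is the key tool. Assumption~\ref{ass:number-particles} is immediate. Taking logarithms decomposes $\log\Phi_p(\theta)$ into $k$ bounded analytic summands forming, with spacing $2\pi/n$ and uniformly in $a_1,a_2\in[\delta,1/\delta]$, a Riemann-sum approximation of a contour integral on the arc $\{e^{i\phi}:\phi\in[0,2\pi k/n]\}$. Differentiating at $\theta=0$ identifies $\mathbb{E}[|X|]\asymp n$ (giving Assumption~\ref{ass:far-k}) together with cumulant bounds on $X-\mathbb{E}[X]$ of order at most $n$ to any fixed order, yielding the sub-Gaussian bound at scale $\sqrt{\mathbb{E}[|X|]}\asymp\sqrt{n}$ (Assumption~\ref{ass:X-subgaussian}). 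For Assumption~\ref{ass:X-aperiodicity}, each factor $|a_2+a_1 e^{i\theta}\omega^j|/|a_2+a_1\omega^j|$ is strictly less than $1$ for $\theta\notin 2\pi\mathbb Z$, with a uniform quadratic lower bound on $1-|\cdot|$ near $\theta=0$ and a uniform positive lower bound elsewhere, both passed to the product via the same Riemann approximation.

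Finally, the identity $|a_2+a_1\omega^k|^2-|a_2+a_1\omega^{k-1}|^2=-4a_1a_2\sin((2k-1)\pi/n)\sin(\pi/n)$ and a Taylor expansion of the square root give
\[
\gamma=1-|\lambda_{I_1}|=\frac{2a_1a_2\sin((2k-1)\pi/n)\sin(\pi/n)}{|a_2+a_1\omega^{k-1}|^2}\bigl(1+O(n^{-1})\bigr)\, ;
\]
standard trigonometric manipulation, together with the definition $a_2+a_1 e^{i\pi/n}=re^{i\theta_0}$ and the identity $(a_1+a_2)^2-4a_1a_2\sin^2((k-1)\pi/n)=|a_2+a_1\omega^{k-1}|^2$, recasts this in the form claimed by the corollary, and Corollary~\ref{cor:mixing_time} concludes via $t_\epsilon\sim \log n/\gamma$. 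The main obstacle is the uniform sub-Gaussian control: because $p$ is not a product measure, the cumulants of $X-\mathbb{E}[X]$ must be extracted from the higher derivatives of $\log\Phi_p$ with Riemann-sum error bounds that are uniform in $a_1,a_2\in[\delta,1/\delta]$ and in the placement of the roots $\omega^j$ relative to the singularities of $z\mapsto\log(a_2+a_1 z)$.
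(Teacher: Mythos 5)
Your overall strategy is the paper's: identify the weights $p_\ell$, check membership in $\mathcal{A}(\eta,\delta,K_g,K_a)$ through the product form of $\Phi_p$ and a Riemann-sum approximation, compute $\gamma$ from $\lambda_{I_1}$, and invoke Corollary~\ref{cor:mixing_time}. However, there is a genuine error at the very first step which propagates to the final formula: you work with the unshifted roots $\omega^j$, $j\in I$, whereas the spectral theory of Section~\ref{sec:spectral-gap} (and hence the definition of $\lambda_J$, of $\gamma$, and of the probabilities $p_\ell$) is built on $\xi(I)=(\omega^{I(1)-\frac{k-1}{2}},\dots,\omega^{I(k)-\frac{k-1}{2}})$. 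The centering by $\omega^{-(k-1)/2}$ is not cosmetic: it is what makes $e_\ell(\xi(I_0))$ real and positive, so that $p_\ell=e_\ell(\xi(I_0))a_1^\ell a_2^{k-\ell}/\alpha_{I_0}$ is an actual probability vector; your $p_\ell\propto e_\ell(1,\omega,\dots,\omega^{k-1})a_1^\ell a_2^{k-\ell}$ is complex in general (and, when $a_1=a_2$ and $2k>n$, a factor $a_2+a_1\omega^j$ can vanish). Consequently your eigenvalue $\lambda_{I_1}=\frac{a_2+a_1\omega^{k}}{a_2+a_1\omega^{k-1}}$ is not the correct one; the paper's computation, with $\xi(I_1)_1=e^{i\pi(k+1)/n}$ and $\xi(I_0)_1=e^{i\pi(k-1)/n}$, gives $\lambda_{I_1}=\frac{a_2+a_1e^{i\pi(k+1)/n}}{a_2+a_1e^{i\pi(k-1)/n}}$. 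Your resulting gap, proportional to $\sin\left(\frac{(2k-1)\pi}{n}\right)$, is not asymptotically equivalent to the claimed $\frac{a_1}{r}\sin\left(\frac{k\pi}{n}-\theta_0\right)\;(=\frac{a_1a_2}{r^2}\sin\frac{k\pi}{n})$: for $k/n$ near $1/2$ your leading term degenerates, and for $2k>n$ it is negative, while the true gap stays of order $1/n$ uniformly on $k/n\in[\eta,1-\eta]$. So the final ``standard trigonometric manipulation'' cannot recast your expression into the corollary's form; the error must be fixed at the source by using $\xi(I_0)$, $\xi(I_1)$ throughout (in $\alpha_{I_0}$, in $\Phi_p$, whose product then runs over the symmetric arc $[-(k-1)\pi/n,(k-1)\pi/n]$ bounded away from $-1$, and in $\lambda_{I_1}$).

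A secondary gap concerns Assumption~\ref{ass:X-subgaussian}: bounding the cumulants of $X-\E[X]$ ``to any fixed order'' by $O(n)$ does not yield sub-Gaussianity with parameter $K_g\sqrt{\E[|X|]}$. The paper instead controls the full moment generating function, using that $\log\Phi_p(\theta)=k\left(F(\tau,\theta)+O(k^{-2})\right)$ with $F$ analytic on a fixed complex band (uniformly in $a_1,a_2\in[\delta,1/\delta]$ and $\tau\in[\eta,1-\eta]$), which gives $\left|\Phi_{X-\E[X]}(i\theta)\right|\le\exp\left(K^2\E[|X|]\theta^2\right)$ on that band and then sub-Gaussianity via the standard equivalence. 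Your Riemann-sum idea is the right mechanism, but you need the mgf (or all cumulants simultaneously, with uniform control), not finitely many cumulants; with the correct symmetric arc this uniformity is exactly what keeps the integrand away from the singularity of $z\mapsto\log(a_2+a_1z)$, which is the obstacle you correctly flag but which your unshifted arc actually hits.
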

Before proving this corollary, let us briefly explain its meaning in terms of dimer models. Since the work of Cohn, Keynon and Propp \cite{cohn2001variational}, it has been conjectured that the statistic behavior of the dimer model on a simply connected bounded domain locally looks like a dimer model on a torus for some special choice of weights $(a_0,a_1,a_2)$ depending on the location in the domain in a conformal way. This conjecture has been recently proven by Aggarwal, \cite{aggarwal2023universality}. In our setting, we provide on a toy model a threshold after which all information from the boundary condition is lost in the local statistical behavior of the model. It would be interesting to see whether such phenomenon appears on dimer models on the complement of a simply connected domain. After rescaling, it is expected that there is a curve at distance approximately $\log n$ from the inner domain after which the dimer model locally behave like a given Gibbs measure independently of the shape of the inner domain.
\begin{proof}

A computation yields 
$$\alpha_{I_0}=\sum_{\ell=0}^ke_{\ell}(\xi(I_0))a_1^{\ell}a_2^{k-\ell}=\prod_{j=1}^k\left(a_2+a_1\xi(I_0)_j\right),$$
and the kernel $P$ can be written $P=\sum_{\ell=0}^kp_{\ell}Q^{(\ell)}$ with 
$p_{\ell}=\frac{e_{\ell}(\xi(I_0))}{\alpha_{I_0}}a_1^{\ell}a_2^{k-\ell}$. 

As a first consequence, 
$$\lambda_{I_1}=\frac{\sum_{\ell=0}^ke_{\ell}(\xi(I_1))a_1^{\ell}a_2^{k-\ell}}{\sum_{\ell=0}^ke_{\ell}(\xi(I_0))a_1^{\ell}a_2^{k-\ell}}=\frac{\prod_{j=1}^k\left(a_2+a_1\xi(I_1)_j\right)}{\prod_{j=1}^k\left(a_2+a_1\xi(I_0)_j\right)}=\frac{a_2+a_1e^{\frac{i\pi(k+1)}{n}}}{a_2+a_1e^{\frac{i\pi(k-1)}{n}}},$$
since $\xi(I_1)_j=\xi(I_0)_j$ for $2\leq j\leq k$ and $\xi(I_1)_1=e^{\frac{i\pi(k+1)}{n}}$ while $\xi(I_0)_1=e^{\frac{i\pi(k-1)}{n}}$.
Hence, writing $a_2+a_1e^{i\pi k/n}=re^{i\theta_0}$,
\begin{align*}
\gamma=&1-\left\vert\frac{a_2+a_1e^{\frac{i\pi(k+1)}{n}}}{a_2+a_1e^{\frac{i\pi(k-1)}{n}}}\right\vert\\
=&\frac{4\pi}{n}\frac{a_1}{r}\sin\left(\frac{k\pi}{n}-\theta_0\right)+o\left(\frac{1}{n}\right).
\end{align*}

Next, the Fourier transform of $p$ is given by 
\begin{align*}
\Phi_{p}(\theta)=&\sum_{\ell=0}^ke^{i\ell \theta}\frac{e_{\ell}(\xi(I_0))}{\alpha_{I_0}}a_1^{\ell}a_2^{k-\ell}=\frac{\prod_{j=1}^k\left(a_2+a_1e^{i \theta}\xi(I_0)_j\right)}{\prod_{j=1}^k\left(a_2+a_1\xi(I_0)_j\right)}=\prod_{j=1}^k\frac{a_2+a_1e^{i\theta}\xi(I_0)_j}{a_2+a_1\xi(I_0)_j}.
\end{align*}
Taking the logarithm and a Riemann integral for the $\mathcal{C}^2$-function $u\mapsto \log\left(\frac{a_2+a_1e^{i(\theta+\pi u)}}{a_2+a_1e^{i\pi u}}\right)$ (whose first and second derivatives are bounded uniformly on $(u,\theta)\in [-(1-\eta),1-\eta]\times \left([-\pi\eta/2,\pi\eta/2]+i\mathbb{R}\right)$) yields 
$$\log \Phi_p(\theta)=k\left(\int_{-\frac{k}{n}}^{\frac{k}{n}}\log\left(\frac{a_2+a_1e^{i(\theta+\pi u)}}{a_2+a_1e^{i\pi u}}\right)du+O\left(\frac{1}{k^2}\right)\right).$$
In particular, writing $\tau=\frac{k}{n}$,
$$\Phi_p(\theta)=\exp\left(kF(\tau,\theta)+O\left(\frac{1}{k}\right)\right),$$
with $F(\tau,\theta)=\int_{-\tau}^{\tau}\log\left(\frac{a_2+a_1e^{i(\theta+\pi u)}}{a_2+a_1e^{i\pi u}}\right)du$ being $C^\infty$ on $[\eta,1-\eta]\times ]-\pi\eta/2,\pi\eta/2[+i\mathbb{R}$ and $O\left(\frac{1}{k}\right))$ being uniform on this range of parameters. 

Recall that $\log \Phi_p(\theta)=i\mathbb{E}[X]\theta-\frac{\Var(X)}{2}\theta^2+O\left( \theta^3\right)$, with $O\left(\theta^3\right)$ only depending on the third derivative of $\log \Phi_p$ on $[0,\theta]$. Hence, we deduce that 
$$\mathbb{E}[X]=\frac{k}{i}\partial_{\theta} F(\tau,0)+O\left(\frac{1}{k}\right),$$
and 
$$\Phi_{X-\mathbb{E}[X]}(\theta)= \exp\left(k\left(F(\tau,\theta)-\partial_{\theta}F(\tau,0)\right)+O\left(\frac{1}{k}\right)\right).$$
and there exists a band $[-\theta_0,\theta_0]+i\mathbb{R}$ only depending on $\eta$ on which
$$\left\vert \Phi_{X-\mathbb{E}[X]}(i\theta)\right\vert \leq \exp\left(k\partial^2_\theta F(0,\theta)\theta^2+O\left(\frac{1}{k}\right)\right)\leq \exp\left(K^2\mathbb{E}[X]\theta^2\right),$$
with $K^2=\sup_{\tau\in[\eta,1-\eta]}\frac{\partial_\theta F(\tau,0)}{\partial^2_\theta F(\tau,0)}$. This implies that $X-\mathbb{E}[X]$ is $K\sqrt{\mathbb{E}[X]}-$ sub-Gaussian, up to a universal numeric constant, see \cite[Proposition 2.6.1]{vershynin2018high}, and Assumption \ref{ass:X-subgaussian} holds.

Next, remark that $\Re F(\tau,\theta)$ is decreasing on $[0,\pi]$. Hence, the Taylor expansion of $F(\tau,\theta)$ in $\theta$ around $(\tau,0)$ yields Assumption \ref{ass:X-aperiodicity} with a constant independent of $\tau \in [\eta,1-\eta]$.
\end{proof}

\section{Spectral gap}
\label{sec:spectral-gap}

Let us first present a brief picture of the eigen-elements of the matrices $A^{(\ell)}$, $Q^{(\ell)}$ and $P$. 

Recall that for $I\in B_{k,n}$, we let $\xi(I)=(\omega^{I(1)-\frac{k-1}{2}},\dots, \omega^{I(k)-\frac{k-1}{2}})$, where $\omega=e^{i2\pi/n}$, and that
\[
 I_0= (k-1,\dots,1,0)\qquad\text{and}\qquad I_1=  I_0+(1,0,\dots,0)\, .
 \]

By~\cite[Lemma 4.3]{guilhot2022quantum}, all the matrices $A^{(\ell)}$ share a common orthogonal basis of eigenvectors $(\varphi_J)_{J\in B_{k,n}}$ (for the standard scalar product), given by
\[
\varphi_J(I)=\overline{S_I(\xi(J))}\, ,
\]
where $S_I$ denotes the Schur function 
$$S_I(x_1,\ldots,x_k)=\frac{\det\left(x_i^{I_j}\right)}{\det\left(x_i^{j-1}\right)},$$
and the corresponding eigenvalues for $A^{(\ell)}$, $\ell\geq 0$ are
\[
\alpha^{(\ell)}_J=e_\ell(\xi(J))
\]
where $e_\ell$ denotes the $\ell^{\text{th}}$ elementary symmetric polynomial. The eigenvalue $\alpha_{J}^{(\ell)}$ for $-k\leq \ell\leq -1$ is then simply 
$$\alpha_{J}^{(\ell)}=\overline{\alpha_{J}^{(-\ell)}}.$$

Letting $\Delta$ be the diagonal matrix with diagonal entries given by the eigenvector $\varphi_{I_0}$, one may then write $Q^{(\ell)}$ as
\[
Q^{(\ell)}=\frac{1}{\alpha^{(\ell)}_{I_0}}\Delta^{-1}A^{(\ell)}\Delta\, .
\]
We have
\begin{equation}\label{eq:Delta-phi}
Q^{(\ell)}\Delta^{-1}\varphi_J=\frac{1}{\alpha^{(\ell)}_{I_0}}\Delta^{-1}A^{(\ell)}\varphi_J=\frac{\alpha^{(\ell)}_{J}}{\alpha^{(\ell)}_{I_0}}\Delta^{-1}\varphi_J\, .
\end{equation}
Moreover, it is not hard to check that the eigenvectors $\Delta^{-1}\varphi_J$ are orthogonal  for the weighted scalar product $\langle\cdot,\cdot\rangle_\mu$. After normalization, one obtains an orthonormal (for$\langle\cdot,\cdot\rangle_\mu$)  basis of eigenvectors $(f_J)$ of $Q^{(\ell)}$ given by
\[
f_J(I)=\frac{|V(\xi(J))|}{|V(\xi(I_0))|}\frac{\overline{S_I(\xi(J))}}{\overline{S_I(\xi(I_0))}}=\frac{|V(\xi(J))|}{|V(\xi(I_0))|}\frac{\overline{S_J(\xi(I))}}{\overline{S_J(\xi(I_0))}}\, ,
\]
with the corresponding eigenvalues
\begin{equation}\label{eq:spectral_gap_symetry}
\lambda^{(\ell)}_J=\frac{e_\ell(\xi(J))}{e_\ell(\xi(I_0))}\quad \text{ and} \quad \lambda^{(-\ell)}_J=\overline{\lambda^{(\ell)}_J}\, , \quad \ell\geq 0\, .
\end{equation}
The eigenvalues of $P=\sum_{\ell=-k}^{k} p_\ell Q^{(\ell)}$ are then
\[
\lambda_J=\sum_{\ell=-k}^{k}p_\ell\frac{e_\ell(\xi(J))}{e_\ell(\xi(I_0))}\, \cdot 
\]
We now prove several properties regarding the spectral gap $\gamma=1-|\lambda_{I_1}|$.
\begin{proposition}\label{prop:spectral-gap}
We have
\[
\lambda_{I_1}=1-\sum_{\ell=0}^{k}p_\ell\omega^{k-\ell}(1-\omega)\frac{1-\omega^\ell}{1-\omega^k}  -\sum_{\ell=-1}^{-k}p_{\ell}\omega^{-k-\ell}(1-\omega^{-1})\frac{1-\omega^\ell}{1-\omega^{-k}}\, .
\]
Moreover, under Assumptions~\ref{ass:number-particles}, \ref{ass:far-k} and~\ref{ass:X-subgaussian}, we have
\[
\gamma=2\left(1+O\left(\frac{1}{n}\right)\right)\frac{\sin\left(\frac{\pi}{n}\right)}{\sin\left(\frac{k\pi}{n}\right)}\sum_{\ell=1}^k(p_{\ell}+p_{-\ell})\sin\left(\frac{\ell\pi}{n}\right)\sin\left(\frac{(k-\ell)\pi}{n}\right)\, \cdot 
\]
\end{proposition}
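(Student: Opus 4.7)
My plan is to treat the two claims separately, since the first is an exact algebraic identity while the second is an asymptotic estimate relying on the sub-Gaussian concentration.

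For the formula for $\lambda_{I_1}$, the key observation is that $I_1$ and $I_0$ differ only in the first coordinate, so $\xi(I_1)$ is obtained from $\xi(I_0)=(a,\xi_2,\dots,\xi_k)$ by replacing $a=\omega^{(k-1)/2}$ with $\omega a$. The generating-function identity
\[
\prod_{j=1}^k(1+t\xi(I_1)_j)=\prod_{j=1}^k(1+t\xi(I_0)_j)+(\omega-1)\,ta\prod_{j=2}^k(1+t\xi_j)
\]
yields, after extraction of $[t^\ell]$, $e_\ell(\xi(I_1))=e_\ell(\xi(I_0))+(\omega-1)\,a\,e_{\ell-1}(\xi(I_0)_{\geq 2})$. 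Since both $\xi(I_0)$ and $\xi(I_0)_{\geq 2}$ are geometric progressions of ratio $\omega$, the $q$-binomial closed form for elementary symmetric polynomials on such a progression gives, after cancellation of the exponents of $\omega$ and simplification of the quotient of $q$-binomial coefficients,
\[
\frac{a\,e_{\ell-1}(\xi(I_0)_{\geq 2})}{e_\ell(\xi(I_0))}=\omega^{k-\ell}\,\frac{1-\omega^\ell}{1-\omega^k}\,,
\]
so that $\lambda^{(\ell)}_{I_1}=1-(1-\omega)\omega^{k-\ell}\tfrac{1-\omega^\ell}{1-\omega^k}$ for $\ell\geq 0$. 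The case $\ell<0$ is handled by complex conjugation through~\eqref{eq:spectral_gap_symetry}, and summing against $p$ gives the stated identity.

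For the asymptotic, write $\lambda_{I_1}=1-z$ with $z=\sum_{\ell=1}^k(p_\ell b_\ell+p_{-\ell}\overline{b_\ell})$ and $b_\ell=(1-\omega)\omega^{k-\ell}\tfrac{1-\omega^\ell}{1-\omega^k}$. Since $\gamma=\tfrac{1-|\lambda_{I_1}|^2}{1+|\lambda_{I_1}|}=\tfrac{2\Re z-|z|^2}{1+|\lambda_{I_1}|}$, the strategy is to compute $\Re z$ in closed form and show $|z|^2$ is a lower-order correction. Substituting $1-\omega^j=-2i\sin(j\pi/n)\,e^{ij\pi/n}$ for $j\in\{1,\ell,k\}$ collapses the phases to
\[
b_\ell=-2i\,\frac{\sin(\pi/n)\sin(\ell\pi/n)}{\sin(k\pi/n)}\,e^{i\pi(k-\ell+1)/n}\,,\qquad \Re b_\ell=\frac{2\sin(\pi/n)\sin(\ell\pi/n)\sin((k-\ell+1)\pi/n)}{\sin(k\pi/n)}\,,
\]
and hence an exact expression for $\Re z$ which differs from the claimed formula only through the occurrence of $\sin((k-\ell+1)\pi/n)$ in place of $\sin((k-\ell)\pi/n)$.

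The delicate step---and the main obstacle---is converting this into a uniform $(1+O(1/n))$ multiplicative error, because the ratio $\sin((k-\ell+1)\pi/n)/\sin((k-\ell)\pi/n)$ blows up when $\ell$ is within $O(1)$ of $k$. I would split the sum at $\ell_*=(1-\delta/2)k$: for $\ell\leq\ell_*$, Assumption~\ref{ass:number-particles} yields $\sin((k-\ell)\pi/n)\geq \sin(\delta k\pi/(2n))\geq c_{\delta,\eta}>0$, so the substitution costs only a relative $O(1/n)$; for $\ell>\ell_*$, the combination of Assumption~\ref{ass:far-k} ($\E[|X|]\leq(1-\delta)k$) and Assumption~\ref{ass:X-subgaussian} yields $p_\ell+p_{-\ell}\leq 2\exp(-c_{\delta,K_g}k)$, so this block is super-polynomially negligible even after the coarsest prefactor estimate. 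The same split bounds $|z|\leq\tfrac{2\sin(\pi/n)}{\sin(k\pi/n)}\sum_\ell(p_\ell+p_{-\ell})\sin(\ell\pi/n)\leq C\,\Re z$, hence $|z|^2=O(1/n)\cdot\Re z$; combining this with $1+|\lambda_{I_1}|=2+O(\Re z)$ gives $\gamma=\Re z\,(1+O(1/n))$, which after the sine substitution is precisely the claimed formula.
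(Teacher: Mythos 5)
Your proof is correct, and its skeleton is the same as the paper's: the exact formula for $\lambda^{(\ell)}_{I_1}$ comes from the same one-variable perturbation identity $e_\ell(\xi(I_1))=e_\ell(\xi(I_0))+(\omega-1)a\,e_{\ell-1}(\xi(I_0)_{\geq 2})$ combined with the $q$-binomial closed form (your generating-function phrasing is just a repackaging of the paper's expansion of $e_\ell(1,\omega,\dots,\omega^{k-2},\omega^k)$), the negative $\ell$ are handled by conjugation, and the asymptotic part likewise reduces $\gamma$ to $\Re z$ up to a factor $1+O(1/n)$ after the trigonometric rewriting, with $\Re b_\ell$ carrying the shifted argument $(k-\ell+1)\pi/n$. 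The only genuine divergence is how the sub-Gaussian hypothesis is used to turn the $+1$ shift and the quadratic terms into a \emph{relative} $O(1/n)$ error: the paper works in expectation, using $\E[X^2]\leq \E[X]^2+2K_g^2\E[|X|]$ together with $k-|\E[X]|\geq \delta k$ to get $\E[|X|(k-|X|)]\gtrsim k\,\E[|X|]$, so the additive error $O(\E[|X|]/n^3)$ is automatically a relative $O(1/n)$ of the main term, whereas you split the sum at $\ell_*=(1-\delta/2)k$ and kill the block near $\ell=k$ with a sub-Gaussian tail bound $p_\ell+p_{-\ell}\leq e^{-c_{\delta,K_g}k}$. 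Both work; the paper's moment argument is slightly leaner because it never needs to compare an absolute remainder against a lower bound on the main term, while your tail-splitting implicitly does — the exponentially small block is only negligible \emph{relative} to $\Re z$ because $\Re z\gtrsim \E[|X|]/n^2\gtrsim \delta/n^2$, so you are tacitly invoking the lower bound $\E[|X|]\geq\delta$ of Assumption~\ref{ass:far-k} (or at least some quantitative lower bound on the main term); you should state this explicitly, but it is available, so this is a presentational gap rather than a mathematical one.
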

This result may be proven for much weaker hypotheses than Assumption \ref{ass:X-subgaussian}. For example, a moment condition $k-\frac{\E[X^2]}{\E[ X ]}\gg \log(k)$ suffices to get an asymptotic formula up to an error $o\left(\frac{1}{\log k}\right)$, which is small enough to get our asymptotic results.
\begin{proof}[Proof of Proposition~\ref{prop:spectral-gap}]
For all $\ell\in \llbracket 0,k\rrbracket$, we have
\begin{align*}
\lambda^{(\ell)}_{I_1}&=\frac{e_\ell(\xi(I_1))}{e_\ell(\xi(I_0))}= \frac{e_\ell(1,\omega,\dots,\omega^{k-2},\omega^k)}{e_\ell(1,\omega,\dots,\omega^{k-1})}\\
&= \frac{e_\ell(1,\omega,\dots,\omega^{k-1})+(\omega^k-\omega^{k-1})e_{\ell-1}(1,\omega,\dots,\omega^{k-2})}{e_\ell(1,\omega,\dots,\omega^{k-1})}\\
&=1-\omega^{k-1}(1-\omega)\frac{e_{\ell-1}(1,\omega,\dots,\omega^{k-2})}{e_\ell(1,\omega,\dots,\omega^{k-1})}\, \cdot 
\end{align*}
Using the $q$-identity
\[
e_\ell(1,q,\dots,q^{k-1})=q^{\frac{\ell(\ell-1)}{2}}{k\choose \ell}_q=q^{\frac{\ell(\ell-1)}{2}}\frac{(1-q^k)\dots (1-q^{k-\ell+1})}{(1-q)\dots(1-q^\ell)}\, ,
\]
we obtain
\[
\frac{e_{\ell-1}(1,\omega,\dots,\omega^{k-2})}{e_\ell(1,\omega,\dots,\omega^{k-1})}=\omega^{-\ell+1}\frac{1-\omega^\ell}{1-\omega^k}\, \cdot
\]
Hence, for $\ell\in\llbracket 0,k\rrbracket$,
\begin{equation}\label{eq:expression_lambda_I1_l}
\lambda^{(\ell)}_{I_1}=1-\omega^{k-\ell}(1-\omega)\frac{1-\omega^\ell}{1-\omega^k}\, .
\end{equation}
In particular, $\lambda^{(0)}_{I_1}=1$. And for $\ell\in\llbracket -k,-1\rrbracket$
\[\lambda^{(\ell)}_{I_1}=\overline{\lambda^{(-\ell)}_{I_1}}=1-\omega^{-k-\ell}(1-\omega^{-1})\frac{1-\omega^\ell}{1-\omega^{-k}}\, .
\]
Hence,
\begin{equation}
\lambda_{I_1}=1-\sum_{\ell=1}^{k}p_\ell\omega^{k-\ell}(1-\omega)\frac{1-\omega^\ell}{1-\omega^k} -\sum_{\ell=-1}^{-k}p_{\ell}\omega^{-k-\ell}(1-\omega^{-1})\frac{1-\omega^\ell}{1-\omega^{-k}}\, \cdot 
\end{equation}
Now, using that $\omega^j-1=2i\omega^{j/2}\sin\left(\frac{j\pi}{n}\right)$ and writing $s_{k,n}=\dfrac{\sin\left(\frac{\pi}{n}\right)}{\sin\left(\frac{k\pi}{n}\right)}$, we see that
\[
\lambda_{I_1}= 1+2is_{k,n}\sum_{\ell=1}^{k}p_\ell\omega^{\frac{k-\ell+1}{2}}\sin\left(\tfrac{\ell\pi}{n}\right)+2is_{k,n}\sum_{\ell=-1}^{-k}p_\ell\omega^{\frac{-k-\ell-1}{2}}\sin\left(\tfrac{\ell\pi}{n}\right)\, , 
\]
so that
\begin{align*}
|\lambda_{I_1}|^2&= \left(1-2s_{k,n}\sum_{\ell=1}^{k}p_\ell\sin\left(\tfrac{(k-\ell+1)\pi}{n}\right)\sin\left(\tfrac{\ell\pi}{n}\right)-2s_{k,n}\sum_{\ell=-1}^{-k}p_\ell\sin\left(\tfrac{(-k-\ell-1)\pi}{n}\right)\sin\left(\tfrac{\ell\pi}{n}\right)\right)^2\\
& \hspace{2cm}+\left(2s_{k,n}\sum_{\ell=1}^{k}p_\ell\cos\left(\tfrac{(k-\ell+1)\pi}{n}\right)\sin\left(\tfrac{\ell\pi}{n}\right)+2s_{k,n}\sum_{\ell=-1}^{-k}p_\ell\cos\left(\tfrac{(-k-\ell-1)\pi}{n}\right)\sin\left(\tfrac{\ell\pi}{n}\right)\right)^2\\
&= 1-4s_{k,n}\E\left[\sin\left(\frac{(k-|X|+1)\pi}{n}\right)\sin\left(\frac{|X|\pi}{n}\right)\right]+4s_{k,n}^2\E\left[\sin\left(\frac{(k-|X|+1)\pi}{n}\right)\sin\left(\frac{|X|\pi}{n}\right)\right]^2\\
& \hspace{1cm}+4s_{k,n}^2\E\left[\cos\left(\frac{(k-|X|+1)\pi}{n}\right)\sin\left(\frac{X\pi}{n}\right)\right]^2\\
&= 1-4s_{k,n}\E\left[\sin\left(\frac{(k-|X|)\pi}{n}\right)\sin\left(\frac{|X|\pi}{n}\right)\right]+O\left(\frac{\E[|X|]}{n^3}\right)\, .
%& \hspace{1cm}+4s_{k,n}^2\E\left[\cos\left(\frac{(k-|X|)\pi}{n}\right)\sin\left(\frac{X\pi}{n}\right)\right]^2
%&= 1-4s_{k,n}\left(\sum_{\ell=1}^{k}p_\ell\sin\left(\tfrac{(k-\ell+1)\pi}{n}\right)\sin\left(\tfrac{\ell\pi}{n}\right)+\sum_{\ell=-1}^{-k}p_\ell\sin\left(\tfrac{(-k-\ell-1)\pi}{n}\right)\sin\left(\tfrac{\ell\pi}{n}\right)\right)+O\left(\frac{\E[\vert X\vert]^2}{k^2n^2}\right)\\
%&=1-4s_{k,n}\sum_{\ell=1}^{k}(p_\ell+p_{-\ell})\sin\left(\tfrac{(k-\ell+1)\pi}{n}\right)\sin\left(\tfrac{\ell\pi}{n}\right)+O\left(\frac{\E[\vert X\vert]^2}{k^2n^2}\right)\, .
\end{align*}
Since $\frac{\sin(\theta)x}{\theta}\leq \sin(x)\leq x$ for $x\in[0,\theta]$, we have
$$s_{k,n}\E\left[\sin\left(\frac{(k-|X|)\pi}{n}\right)\sin\left(\frac{|X|\pi}{n}\right)\right]\asymp \frac{\E[\vert X\vert(k-\vert X\vert)\vert]}{n^3},\, $$
where $\asymp$ means equality up to a numeric constant only depending on $\eta>0$ such that $\frac{k}{n}\in ]\eta,1-\eta[$. By Assumption \ref{ass:X-subgaussian}, we have $\E[X^2]\leq \E[X]^2 +2K_g^2\E[|X|]$. Hence,
\[
\E[|X|(k-|X|)]\geq k\E[|X|]-\E[X]^2-2K_g^2\E[|X|]\geq \E[|X|]\left(k-|\E[X]|-2K_g^2\right)\, .
\]
Using Assumption~\ref{ass:far-k}, we have $k-|\E[X]|\geq \delta k$, hence $\E[|X|]=O\left(\frac{\E[|X|(k-|X|)]}{n}\right)$. Therefore,
\[
|\lambda_{I_1}|^2=1-4s_{k,n}\left(1+O\left(\frac{1}{ n}\right)\right)\sum_{\ell=1}^{k}(p_\ell+p_{-\ell})\sin\left(\tfrac{(k-\ell)\pi}{n}\right)\sin\left(\tfrac{\ell\pi}{n}\right)\, ,
\]
and
\[
\gamma=2s_{k,n}\left(1+O\left(\frac{1}{ n}\right)\right)\sum_{\ell=1}^{k}(p_\ell+p_{-\ell}) \sin\left(\tfrac{\ell\pi}{n}\right)\sin\left(\tfrac{(k-\ell)\pi}{n}\right)\, .
\]
\end{proof}
The proof of the latter proposition shows that
\begin{equation}\label{eq:average_gamma}
\gamma=\left(1+O\left(\frac{1}{ n}\right)\right)\sum_{\ell=1}^k(p_{\ell}+p_{-\ell})\gamma_{\ell}=\left(1+O\left(\frac{1}{n}\right)\right)\mathbb{E}\left[\gamma_{X}\right],
\end{equation}
where, for $-k \leq \ell\leq k$, $\gamma_{\ell}=1-\left\vert \lambda_{I_1}^{(\ell)}\right\vert$ is the spectral gap associated to $Q^{(\ell)}$ and satisfies
\begin{equation}\label{eq:expression_gammal}
\gamma_{\ell}=2\frac{\sin\left(\frac{\pi}{n}\right)\sin\left(\frac{|\ell|\pi}{n}\right)\sin\left(\frac{(k-|\ell|)\pi}{n}\right)}{\sin\left(\frac{k\pi}{n}\right)}+O\left(\frac{|\ell|}{n^3}\right)\, .
\end{equation}
Since the right hand-side is a continuous function of $\ell$, one expects that the sub-gaussian property of $X$ translates into a sub-gaussian property of $\gamma_X$. 
\begin{lemma}\label{lem:gamma_sub_gaussian}
Under Assumptions~\ref{ass:number-particles}, \ref{ass:far-k} and~\ref{ass:X-subgaussian}, there exists $L>0$ only depending on $\eta$ such that $(\gamma_X-\gamma)$ is sub-gaussian with parameter $\frac{LK_g\sqrt{\gamma}}{k}$.
\end{lemma}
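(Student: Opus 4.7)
The plan is to realize $\gamma_X$ as a Lipschitz function of $|X|$ and then invoke the Lipschitz contraction principle for sub-Gaussian variables. Since $\lambda_{I_1}^{(-\ell)} = \overline{\lambda_{I_1}^{(\ell)}}$ by~\eqref{eq:spectral_gap_symetry}, one has $\gamma_\ell = \gamma_{|\ell|}$, so I extend $\ell \mapsto \gamma_\ell$ to a smooth function $h:[0,k] \to \R$ using formula~\eqref{eq:expression_lambda_I1_l}. A short calculation starting from~\eqref{eq:expression_lambda_I1_l} gives, for $\ell \in [0,k]$,
\[
|\lambda_{I_1}^{(\ell)}|^2 \;=\; 1 - 4r(\ell)\sin\!\left(\tfrac{(k-\ell+1)\pi}{n}\right) + 4r(\ell)^2\, ,\qquad r(\ell):=\frac{\sin(\pi/n)\sin(\pi\ell/n)}{\sin(k\pi/n)}\, ,
\]
so that $h(\ell) := 1 - \sqrt{1 - 4r(\ell)\sin((k-\ell+1)\pi/n) + 4r(\ell)^2}$ defines the desired extension.

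The main technical step is to show that $h$ is Lipschitz on $[0,k]$ with constant $O(1/n^2)$. By Assumption~\ref{ass:number-particles}, $\sin(k\pi/n)$ is bounded away from $0$, hence $r(\ell) = O(1/n)$ and $r'(\ell) = O(1/n^2)$. Since $\gamma_\ell = O(1/n)$ as well, one has $1 - 4r(\ell)\sin(\cdot) + 4r(\ell)^2 = |\lambda_{I_1}^{(\ell)}|^2 \geq 1/2$ for $n$ large enough. Differentiating the expression for $h$ and bounding each term then yields $|h'(\ell)| \leq C/n^2$ uniformly in $\ell \in [0,k]$.

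Applying the Lipschitz contraction (see e.g.~\cite[Proposition 2.6.1]{vershynin2018high}) to the $K_g\sqrt{\E[|X|]}$-sub-Gaussian variable $|X| - \E[|X|]$ yields that $h(|X|) - \E[h(|X|)]$ is centered sub-Gaussian with parameter $C K_g \sqrt{\E[|X|]}/n^2$. By Proposition~\ref{prop:spectral-gap} and Assumption~\ref{ass:far-k}, $\gamma \asymp \E[|X|]/n^2 \asymp \E[|X|]/k^2$, so this parameter is exactly of the required order $K_g \sqrt{\gamma}/k$. Finally, the deterministic shift $\E[\gamma_X] - \gamma = O(\gamma/n)$ coming from~\eqref{eq:average_gamma} is bounded by $\sqrt{\gamma}\cdot \sqrt{\gamma}/n \leq \sqrt{\gamma}/k$ (since $\gamma \leq 1$ and $k \asymp n$), so it is absorbed in the sub-Gaussian parameter.

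The main obstacle is the uniform Lipschitz control on $h$, which relies crucially on Assumption~\ref{ass:number-particles} to keep $\sin(k\pi/n)$ bounded below and $|\lambda_{I_1}^{(\ell)}|$ bounded away from $0$; the rest of the argument is standard bookkeeping and uses only that a Lipschitz image of a centered sub-Gaussian is sub-Gaussian, plus the explicit asymptotics of $\gamma$ coming from Proposition~\ref{prop:spectral-gap}.
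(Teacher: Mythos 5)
Your proof is correct and follows essentially the same route as the paper: both arguments rest on the observation that $\gamma_X=\gamma_{|X|}$, that $\ell\mapsto\gamma_\ell$ is Lipschitz with constant $O(1/n^2)$, and that this transfers the sub-Gaussianity of $|X|-\E[|X|]$ to $\gamma_{|X|}$, with the parameter converted via $\gamma\asymp\E[|X|]/n^2$ and the deterministic shift $\E[\gamma_X]-\gamma=O(\gamma/n)$ absorbed at the end. The only (harmless) differences are that you obtain the Lipschitz bound by differentiating the exact formula for $|\lambda^{(\ell)}_{I_1}|$ rather than from~\eqref{eq:expression_gammal}, and you center at $h(\E[|X|])$ and recenter, whereas the paper uses Jensen's inequality with an independent copy of $X$.
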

\begin{proof}
Remark first that $\gamma_{\ell}=\gamma_{-\ell}$ for $0\leq \ell\leq k$ by \eqref{eq:spectral_gap_symetry}. Thus, $\gamma_{X}=\gamma_{\vert X\vert}$ and for all $\theta>0$,
$$\E\left[\exp\left(\frac{(\gamma_X-\E[\gamma_X])^2}{\theta^2}\right)\right]=\E\left[\exp\left(\frac{(\gamma_{|X|}-\E[\gamma_{|X|}])^2}{\theta^2}\right)\right].$$
Let $Y$ be an independent copy of the random variable $X$. By Jensen Inequality, we have, for all $\theta>0$,
\begin{align*}
\E\left[\exp\left(\frac{(\gamma_{|X|}-\E[\gamma_{|X|}])^2}{\theta^2}\right)\right]=\E\left[\exp\left(\frac{(\gamma_{|X|}-\E[\gamma_{|Y|}])^2}{\theta^2}\right)\right]\leq \E\left[\exp\left(\frac{(\gamma_{|X|}-\gamma_{|Y|})^2}{\theta^2}\right)\right]\, .
\end{align*}
By expression~\eqref{eq:expression_gammal}, the function $\ell\mapsto \gamma_\ell$ is Lipschitzian with constant $\tfrac{L}{n^2}$, for some constant $L$ depending only on $\eta$. Hence,
\begin{align*}
\E\left[\exp\left(\frac{(\gamma_X-\E[\gamma_X])^2}{\theta^2}\right)\right]&\leq \E\left[\exp\left(\frac{L^2(|X|-|Y|)^2}{n^4\theta^2}\right)\right]\\
&\leq \E\left[\exp\left(\frac{2L^2((|X|-\E[|X|])^2+(|Y|-\E[|Y|])^2)}{n^4\theta^2}\right)\right]\\
&=\E\left[\exp\left(\frac{2L^2(|X|-\E[|X|])^2}{n^4\theta^2}\right)\right]^2\leq \E\left[\exp\left(\frac{4L^2(|X|-\E[|X|])^2}{n^4\theta^2}\right)\right]\, .
\end{align*}
By Assumption~\ref{ass:X-subgaussian}, the latter quantity is less than~$2$ for $\theta\geq \tfrac{2LK_g\sqrt{\E[|X|]}}{n^2}$. The conclusion of the Lemma then follows from the facts that $\gamma-\E[\gamma_{X}]=O\left(\tfrac{\gamma}{n}\right)$ and that, under Assumptions~\ref{ass:number-particles}, ~\ref{ass:far-k} and~\ref{ass:X-subgaussian}, we have
\begin{equation}\label{eq:equiv_gamma_expect}
\gamma\asymp \frac{\E[|X|(k-|X|)]}{n^3}\asymp \frac{\E[|X|]}{n^2}\, \cdot 
\end{equation}
\end{proof}
\section{Lower bound in Theorem~\ref{thm:main}}\label{sec:lower-bound}

The goal of this section is to prove the following proposition, which yields the lower bound in Theorem~\ref{thm:main}.

\begin{proposition}\label{prop:lower-bound}
Under Assumptions~\ref{ass:number-particles}, ~\ref{ass:far-k} and~\ref{ass:X-subgaussian}, the total-variation distance of $P$ satisfies
\[
\liminf_{n\to +\infty}\cD\left(\frac{\log n+s}{\gamma}\right)\,\underset{s\to -\infty}{\rightarrow}\, 1\, .
\]
\end{proposition}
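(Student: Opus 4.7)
The plan is to apply Wilson's second-moment method with the eigenfunction $f_{I_1}$ as distinguishing statistic, started from the deterministic state $I_0$, and conclude via Chebyshev's inequality. First, since $I_1$ corresponds to the partition $(1,0,\ldots,0)$, the Schur polynomial $S_{I_1}$ equals (up to sign) the first elementary symmetric polynomial $e_1$, and a direct telescoping of $|\omega^a - \omega^b| = 2|\sin(\pi(a-b)/n)|$ in $|V(\xi(I_1))|/|V(\xi(I_0))|$ cancels exactly with $|S_{I_1}(\xi(I_0))| = \sin(\pi k/n)/\sin(\pi/n)$, giving
\[
|f_{I_1}(I)|^2 = \Big|\sum_{j=1}^{k}\omega^{I(j)}\Big|^2\leq k^2,\qquad |f_{I_1}(I_0)|^2 = \frac{\sin^2(\pi k/n)}{\sin^2(\pi/n)}\asymp n^2,
\]
by Assumption~\ref{ass:number-particles}. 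Using $Pf_{I_1} = \lambda_{I_1} f_{I_1}$, one has $|\mathbb{E}_{I_0}[f_{I_1}(X_t)]|^2 = (1-\gamma)^{2t}|f_{I_1}(I_0)|^2 \asymp e^{-2s}$ at $t=(\log n+s)/\gamma$, which tends to $+\infty$ as $s\to -\infty$, while $\mathbb{E}_\mu[f_{I_1}] = 0$ and $\mathrm{Var}_\mu(f_{I_1}) = 1$ by orthogonality to $f_{I_0}\equiv 1$ and $L^2(\mu)$-normalization.

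The heart of the argument, and the main obstacle, is to show $\mathrm{Var}_{I_0}(f_{I_1}(X_t)) = O(1)$. Writing $V_t := \mathbb{E}_{I_0}|f_{I_1}(X_t)|^2$ and $R(I) := \mathrm{Var}[f_{I_1}(X_1)\mid X_0 = I]$, the recursion $V_{t+1} = |\lambda_{I_1}|^2 V_t + \mathbb{E}_{I_0}[R(X_t)]$ together with $1-|\lambda_{I_1}|^2 \asymp \gamma$ reduces the problem to the uniform estimate $R(I) = O(\gamma)$. Decomposing $R(I)$ via the tower law over the two independent sources of randomness (the choice of $\ell\sim p$ and then the $Q^{(\ell)}$-step),
\[
R(I) = \mathbb{E}_{\ell\sim p}\bigl[R^{(\ell)}(I)\bigr] + |f_{I_1}(I)|^2\cdot \mathrm{Var}_{\ell\sim p}\bigl(\lambda^{(\ell)}_{I_1}\bigr).
\]
For the within-$\ell$ piece, a direct calculation exploiting $f_{I_1}(X_1) - f_{I_1}(X_0) = (\bar\omega - 1)\sum_{r\in S}\omega^{-X_0(j_r)}$ for a $Q^{(\ell)}$-move (whose $S$ is the moving $\ell$-subset) yields the clean identity $R^{(\ell)}(I) = |\omega-1|^2\,\mathrm{Var}_{Q^{(\ell)}}(\sum_{r\in S}\omega^{X_0(j_r)}\mid X_0 = I)$. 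Using the Doob-transform structure to approximate the distribution of $S$ and the deterministic bound $|\omega^{I(j_r)}|=1$, one bounds this variance by $C\ell(k-\ell)/k$, so that $R^{(\ell)}(I)\lesssim \gamma_\ell$ by \eqref{eq:expression_gammal}; averaging then gives $\mathbb{E}_\ell R^{(\ell)}(I)\asymp \gamma$ by \eqref{eq:average_gamma}. For the between-$\ell$ piece, Lemma~\ref{lem:gamma_sub_gaussian} (whose proof uses Assumption~\ref{ass:X-subgaussian}) implies $\mathrm{Var}_\ell(\lambda^{(\ell)}_{I_1}) = O(\gamma/k^2)$, and since $|f_{I_1}(I)|^2\leq k^2$ the contribution is also $O(\gamma)$. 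Iterating the recursion gives $V_t \leq |\mathbb{E}_{I_0}f_{I_1}(X_t)|^2 + O(1)$, i.e.\ $\mathrm{Var}_{I_0}(f_{I_1}(X_t)) = O(1)$.

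Finally, setting $M := \mathbb{E}_{I_0}[f_{I_1}(X_t)]$ so that $|M|^2\asymp e^{-2s}$, Chebyshev's inequality applied to the event $A := \{|f_{I_1}-M|\leq |M|/2\}$ yields $\mathbb{P}_{I_0}(X_t\in A)\geq 1-4\,\mathrm{Var}_{I_0}(f_{I_1}(X_t))/|M|^2 = 1-O(e^{2s})$ and $\mu(A)\leq 4\,\mathrm{Var}_\mu(f_{I_1})/|M|^2 = O(e^{2s})$, hence $\mathcal{D}(t)\geq \mathbb{P}_{I_0}(X_t\in A) - \mu(A) = 1-O(e^{2s})$, which tends to $1$ as $s\to -\infty$. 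The main technical obstacle is the uniform bound $R(I) = O(\gamma)$: without Assumption~\ref{ass:X-subgaussian}, the between-$\ell$ term $|f_{I_1}(I)|^2\,\mathrm{Var}_\ell(\lambda^{(\ell)}_{I_1})$ could exceed $\gamma$ by a factor of order $k$, and the Wilson iteration would collapse.
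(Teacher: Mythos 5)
Your overall skeleton (use the eigenfunction attached to $I_1$ as a distinguishing statistic, start from $I_0$, compare mean decay $(1-\gamma)^t$ against a variance bound, conclude by a second-moment inequality) is the same as the paper's, and your computations of $|f_{I_1}|$, of the mean, and of the stationary moments are fine. The difference is in the one step that actually carries the difficulty: controlling $\Var_{I_0}(f_{I_1}(X_t))$. The paper never needs a one-step (Wilson-type) variance estimate: it computes $\E_{I_0}[|\Phi(X_t)|^2]$ \emph{exactly}, using Pieri's rule $S_{I_1}\overline{S}_{I_1}=S_{I_2}+S_{I_0}$ to re-expand the squared statistic in the eigenbasis, and then only needs the single eigenvalue $\lambda_{I_2}=1-2\gamma(1+O(1/n))$ (Lemma~\ref{lem:lambda_I2}). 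Your route instead requires the uniform local bound $R^{(\ell)}(I)=O(\gamma_\ell)$, i.e.\ that $\Var\bigl(\sum_{r\in S}\omega^{I(j_r)}\mid X_0=I\bigr)=O(\ell(k-\ell)/k)$ under the $Q^{(\ell)}$-step. That is exactly the variance one would get if $S$ were a uniform $\ell$-subset sampled without replacement, but under $Q^{(\ell)}$ the moving set is constrained by the non-collision rule and exponentially tilted by Vandermonde ratios; for blocky configurations $S$ is parameterized by a few boundary integers whose tilted law must be shown to concentrate on scale $\sqrt{k}$ rather than $k$. The trivial bound $R^{(\ell)}(I)\lesssim \ell^2/n^2$ is too weak by a factor up to $k$ when $\E[|X|]\asymp k$, so the improvement is essential, and the phrase ``using the Doob-transform structure to approximate the distribution of $S$'' is precisely the unproven hard step. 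This is a genuine gap, not a routine verification.

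There is also a concrete error in the between-$\ell$ term: you invoke Lemma~\ref{lem:gamma_sub_gaussian} to claim $\Var_{\ell\sim p}(\lambda^{(\ell)}_{I_1})=O(\gamma/k^2)$, but that lemma controls only $\gamma_X=1-|\lambda^{(X)}_{I_1}|$, i.e.\ the modulus. The eigenvalue is complex, with $\Im\lambda^{(\ell)}_{I_1}=2s_{k,n}\cos\bigl(\tfrac{\pi(k-|\ell|+1)}{n}\bigr)\sin\bigl(\tfrac{|\ell|\pi}{n}\bigr)\mathrm{sign}(\ell)$, whose fluctuations are driven by $X$ (including its sign), not $|X|$: for $p$ symmetric on $\{\pm\ell_0\}$ with $\ell_0\asymp k$ one gets $\Var_\ell(\lambda^{(\ell)}_{I_1})\asymp\gamma^2$, and then $|f_{I_1}(I)|^2\Var_\ell(\lambda^{(\ell)}_{I_1})$ can be of order $k^2\gamma^2\asymp\gamma\,\E[|X|]\gg\gamma$, so your asserted uniform bound $R(I)=O(\gamma)$ fails whenever $\E[|X|]\to\infty$. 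This particular defect is repairable (replace the contraction coefficient $|\lambda_{I_1}|^2$ by $\sum_\ell p_\ell|\lambda^{(\ell)}_{I_1}|^2=\E[(1-\gamma_X)^2]=1-2\gamma(1+o(1))$, which absorbs the between-$\ell$ variance, and check that $(\E[(1-\gamma_X)^2])^t-(1-\gamma)^{2t}$ times $|f_{I_1}(I_0)|^2$ is $o(e^{-2s})$ at $t=(\log n+s)/\gamma$), but the within-$\ell$ bound $R^{(\ell)}(I)=O(\gamma_\ell)$ remains unproved, whereas the paper's Pieri/$\lambda_{I_2}$ identity disposes of the whole issue in a few lines.
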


Let us first recall a classical lower bound on the total-variation distance. In what follows, the variance of a complex-valued random variable $Z$ is $\Var(Z)=\E[|Z-\E[Z]|^2]$.

\begin{lemma}\label{lem:lower-bound-tv}
Let $(X_t)_{t\in\N}$ be an irreducible Markov chain on $\Omega$. For any function $\Phi:\Omega\to \mathbb{C}$, we have
\[
\cD(t)\geq \max_{x\in\Omega}\frac{\left|\E_x\left[\Phi(X_t)\right]-\E\left[\Phi(X)\right]\right|^2}{2\Var_x\left(\Phi(X_t)\right)+ 2\Var\left(\Phi(X)\right)+\left|\E_x\left[\Phi(X_t)\right]-\E\left[\Phi(X)\right]\right|^2}\, ,
\]
where $X$ is a random variable with the stationary distribution of the chain. Moreover, if $\Phi$ is chosen as an eigenvector associated to an eigenvalue $\lambda\neq 1$, then we have
\[
\left|\E_x\left[\Phi(X_t)\right]-\E\left[\Phi(X)\right]\right|^2=|\lambda|^{2t}|\Phi(x)|^2\, .
\]
\end{lemma}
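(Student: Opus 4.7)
The plan is a standard second-moment/coupling lower bound, carefully adapted to complex-valued~$\Phi$. First, since total variation does not increase under push-forwards, it suffices to bound from below $d_{TV}(\mathcal{L}_x(Z),\mathcal{L}(W))$, where $Z=\Phi(X_t)$ under $\P_x$ and $W=\Phi(X)$. Set $m_1=\E_x[Z]$, $m_2=\E[W]$, $v_1=\Var_x(Z)$, $v_2=\Var(W)$, and $d=|m_1-m_2|$.

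Second, I would compute $\E|Z-W|^2$ under an arbitrary coupling of $(Z,W)$. Decomposing $Z-W=(Z-m_1)-(W-m_2)+(m_1-m_2)$ and expanding $|Z-W|^2=(Z-W)\overline{(Z-W)}$, the cross terms involving the deterministic quantity $m_1-m_2$ vanish in expectation because $\E(Z-m_1)=\E(W-m_2)=0$, leaving
\[
\E|Z-W|^2=v_1+v_2+d^2-2\Re\E\bigl[(Z-m_1)\overline{(W-m_2)}\bigr].
\]
Cauchy--Schwarz bounds the remaining cross term by $2\sqrt{v_1 v_2}\leq v_1+v_2$, so $\E|Z-W|^2\leq 2v_1+2v_2+d^2$ for \emph{every} coupling.

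Third, I would combine Jensen's inequality and Cauchy--Schwarz with a maximal coupling. Jensen gives $d=|\E(Z-W)|\leq \E|Z-W|$, and since $|Z-W|$ vanishes on $\{Z=W\}$,
\[
d^2\leq \bigl(\E[|Z-W|\mathbf{1}_{Z\neq W}]\bigr)^2\leq \E|Z-W|^2\cdot\P(Z\neq W)\leq (2v_1+2v_2+d^2)\,\P(Z\neq W).
\]
Choosing the coupling of $(Z,W)$ pushed forward from a maximal coupling of $\mathcal{L}_x(X_t)$ and~$\mu$, we have $\P(Z\neq W)\leq\P(X_t\neq X)=d_{TV}(\mathcal{L}_x(X_t),\mu)\leq\cD(t)$. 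Rearranging and taking the supremum over~$x$ yields the announced inequality.

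For the \emph{moreover} clause: $P\Phi=\lambda\Phi$ directly gives $\E_x[\Phi(X_t)]=(P^t\Phi)(x)=\lambda^t\Phi(x)$, while applying $\mu P=\mu$ to $P\Phi=\lambda\Phi$ yields $\mu(\Phi)=\lambda\mu(\Phi)$, forcing $\E[\Phi(X)]=0$ since $\lambda\neq 1$. Squaring delivers the stated identity. The only point of real care is tracking the complex conjugations in the expansion of $\E|Z-W|^2$ so that the sharp factor of~$2$ appears in front of~$v_1$ and~$v_2$; beyond this, there is no serious obstacle, as the whole argument reduces to two applications of Cauchy--Schwarz.
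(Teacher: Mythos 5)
Your proof is correct and follows essentially the same route as the paper: a coupling lower bound on total variation combined with the second-moment (Paley--Zygmund) inequality, Jensen for the numerator, and the centered expansion with Cauchy--Schwarz giving the factor $2v_1+2v_2+d^2$ in the denominator. Writing the second-moment step by hand via $\mathbf{1}_{Z\neq W}$ and invoking a maximal coupling instead of taking the infimum over couplings are only cosmetic differences.
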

\begin{proof}[Proof of Lemma~\ref{lem:lower-bound-tv}]
Using the characterization of the total variation distance by couplings, we have
\[
\cD_x(t) \geq \inf \P_{x}(X_t\neq X)\geq \inf\P_{x}\left(\Phi(X_t)\neq \Phi(X)\right)\, ,
\]
where the infimum is taken over all the possible couplings of $X_t$ (when starting from $X_0=x$) and $X$. Using Paley-Zygmund Inequality, we have
\begin{align*}
\P_x\left(\Phi(X_t)\neq \Phi(X)\right)&= \P_x\left( \left|\Phi(X_t)-\Phi(X)\right|>0\right)\\
&\geq \frac{\E_x\left[\left|\Phi(X_t)-\Phi(X)\right|\right]^2}{\E_x\left[\left|\Phi(X_t)-\Phi(X)\right|^2\right]}\,.
\end{align*}
For the numerator, we use
\[
\E_x\left[\left|\Phi(X_t)-\Phi(X)\right|\right]\geq \left|\E_x\left[\Phi(X_t)\right]-\E\left[\Phi(X)\right]\right|\, .
\]
For the denominator, letting $Z_t=\Phi(X_t)-\E_x[\Phi(X_t)]$ and $Z=\Phi(X)-\E[\Phi(X)]$, we have
\begin{align*}
\E_x\left[\left|\Phi(X_t)-\Phi(X)\right|^2\right] &= \E_x\left[\left|Z_t-Z +\E_x[\Phi(X_t)]-\E[\Phi(X)]\right|  ^2\right]\\
&=\Var_x\left(\Phi(X_t)\right)+ \Var\left(\Phi(X)\right)+\left|\E_x\left[\Phi(X_t)\right]-\E\left[\Phi(X)\right]\right|^2-2\E_x\left[\mathfrak{Re}\left(Z_tZ\right)\right]\\
&\leq 2\Var_x\left(\Phi(X_t)\right)+ 2\Var\left(\Phi(X)\right)+\left|\E_x\left[\Phi(X_t)\right]-\E\left[\Phi(X)\right]\right|^2\, ,
\end{align*}
where for the last inequality, we used that
\begin{align*}
-\E_x\left[\mathfrak{Re}\left(Z_tZ\right)\right]&\leq \E_x\left[\left|Z_tZ\right|\right]\leq \sqrt{\Var_x\left(\Phi(X_t)\right)\Var\left(\Phi(X)\right)}\leq\frac{1}{2}\left( \Var_x\left(\Phi(X_t)\right)+\Var\left(\Phi(X)\right)\right)\, .
\end{align*}
We obtain that, for all couplings,
\[
\P_x\left(\Phi(X_t)\neq \Phi(X)\right)\geq \frac{\left|\E_x\left[\Phi(X_t)\right]-\E\left[\Phi(X)\right]\right|^2}{2\Var_x\left(\Phi(X_t)\right)+ 2\Var\left(\Phi(X)\right)+\left|\E_x\left[\Phi(X_t)\right]-\E\left[\Phi(X)\right]\right|^2}\, ,
\]
and since the right-hand side only depends on the marginal distributions of $X_t$ and $X$, the same lower bound applies to $\cD_x(t)$. Taking the maximum over $x\in\Omega$, yields the desired lower bound on $\cD(t)$.

For the second statement of the lemma, note that if $\Phi$ is an eigenvector associated to an eigenvalue $\lambda\neq 1$, then
\[
\E\left[\Phi(X_{t+1})\given\cF_t\right]=\lambda\Phi(X_t)\, .
\]
Hence, by induction, $\E_x\left[\Phi(X_t)\right]=\lambda^t\Phi(x)$. Moreover, $\E[\Phi(X)]=0$, since $\E[\Phi(X)]=\E[P\Phi(X)]=\lambda\E[\Phi(X)]$ and $\lambda\neq 1$.
\end{proof}

We will apply Lemma~\ref{lem:lower-bound-tv} for the kernel $P$ with the (unnormalized) eigenvector $\Phi=\Delta^{-1}\varphi_{I_1}$ introduced in~\eqref{eq:Delta-phi}. We may first note that, since $\Phi(I_0)=1$, the absolute difference between expectations when starting from $I_0$ is equal to $|\lambda_{I_1}|^t=(1-\gamma)^t$. It remains to control the variance. To this end, we will need the following lemma.

\begin{lemma}\label{lem:lambda_I2}
Let $I_2=I_0+(-1,0,\dots,0,1)$. We have
\[
\lambda_{I_2}=1-4\frac{\sin\left(\tfrac{\pi}{n}\right)}{\sin\left(\tfrac{(k-1)\pi}{n}\right)}\sum_{\ell=0}^{k} (p_\ell+p_{-\ell})\sin\left(\tfrac{\ell\pi}{n}\right)\sin\left(\tfrac{(k-\ell)\pi}{n}\right)\, \cdot
\]
In particular, under Assumptions~\ref{ass:number-particles}, \ref{ass:far-k} and~\ref{ass:X-subgaussian}, we have
\[
\lambda_{I_2}=1-2\left(1+O\left(\frac{1}{n}\right)\right)\gamma\, .
\]
\end{lemma}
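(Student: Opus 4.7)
The natural approach is to first compute $\lambda_{I_2}^{(\ell)} = e_\ell(\xi(I_2))/e_\ell(\xi(I_0))$ separately for each $\ell \in \{0,\ldots,k\}$, then recombine with the weights $p_\ell$, in direct parallel with the derivation of $\lambda_{I_1}$ in Proposition~\ref{prop:spectral-gap}. The key simplification is that, after factoring the homogeneity factor $\omega^{(k-1)/2}$, the multisets of $n$-th roots of unity differ only at the two extremes: $\xi(I_0)\omega^{(k-1)/2} = \{1,\omega,\ldots,\omega^{k-1}\}$ while $\xi(I_2)\omega^{(k-1)/2} = \{\omega^{-1},\omega,\ldots,\omega^{k-2},\omega^k\}$, so the middle $k-2$ values coincide and only $\{1,\omega^{k-1}\}$ is replaced by $\{\omega^{-1},\omega^k\}$.

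Applying the identity $e_\ell(A\sqcup\{x,y\}) = e_\ell(A) + (x+y)e_{\ell-1}(A) + xy\,e_{\ell-2}(A)$ with $A = \{\omega,\omega^2,\ldots,\omega^{k-2}\}$, the $e_\ell(A)$ and $e_{\ell-2}(A)$ pieces match exactly between numerator and denominator (because $1\cdot\omega^{k-1} = \omega^{-1}\cdot\omega^k$); only the linear coefficient differs, by
\[
(\omega^{-1}+\omega^k)-(1+\omega^{k-1}) = \omega^{-1}(1-\omega)(1-\omega^k).
\]
The ratio $e_{\ell-1}(\omega,\ldots,\omega^{k-2})/e_\ell(1,\omega,\ldots,\omega^{k-1})$ then reduces, via the homogeneity $e_{\ell-1}(\omega,\ldots,\omega^{k-2}) = \omega^{\ell-1}e_{\ell-1}(1,\omega,\ldots,\omega^{k-3})$ and the $q$-binomial identity already used in Proposition~\ref{prop:spectral-gap}, to $(1-\omega^\ell)(1-\omega^{k-\ell})/[(1-\omega^k)(1-\omega^{k-1})]$ — all the $\omega^{\ast/2}$ phases cancelling precisely. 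Substituting $1-\omega^j = -2i\omega^{j/2}\sin(j\pi/n)$ and carefully collecting the $\omega$ powers and $i$'s produces the real expression
\[
\lambda_{I_2}^{(\ell)} = 1 - 4\,\frac{\sin(\pi/n)\sin(\ell\pi/n)\sin((k-\ell)\pi/n)}{\sin((k-1)\pi/n)}.
\]
Since this is real, $\lambda_{I_2}^{(-\ell)} = \overline{\lambda_{I_2}^{(\ell)}} = \lambda_{I_2}^{(\ell)}$ by \eqref{eq:spectral_gap_symetry}, so summing $\sum_\ell p_\ell \lambda_{I_2}^{(\ell)}$ and regrouping $\ell$ with $-\ell$ gives the first formula of the lemma (the $\ell=0$ term drops because $\sin(0)=0$).

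For the \emph{in particular} statement, the ratio $(1-\lambda_{I_2})/\gamma$ equals, by Proposition~\ref{prop:spectral-gap}, $2\sin(k\pi/n)/\sin((k-1)\pi/n)$ up to a $(1+O(1/n))$ factor; and under Assumption~\ref{ass:number-particles} (which gives $k/n \in (\eta,1-\eta)$), the prosthaphaeresis identity $\sin(k\pi/n)-\sin((k-1)\pi/n) = 2\sin(\pi/(2n))\cos((2k-1)\pi/(2n)) = O(1/n)$ yields $\sin(k\pi/n)/\sin((k-1)\pi/n) = 1+O(1/n)$, hence $\lambda_{I_2} = 1 - 2(1+O(1/n))\gamma$. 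The main obstacle is the phase bookkeeping in the symmetric-function manipulation: the $\omega^{j/2}$ factors coming from converting $1-\omega^j$ to sines must cancel exactly so that the imaginary parts collapse and the real formula emerges — essentially the same combinatorial identity as in the $\lambda_{I_1}$ computation, now applied to two simultaneous perturbations instead of one.
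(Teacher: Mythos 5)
Your proposal is correct and follows essentially the same route as the paper: the same replacement of the pair $\{1,\omega^{k-1}\}$ by $\{\omega^{-1},\omega^k\}$ with cancellation of the $e_{\ell-2}$ terms, the same $q$-binomial reduction and sine conversion giving the real value $\lambda^{(\ell)}_{I_2}=1-4\sin(\pi/n)\sin(\ell\pi/n)\sin((k-\ell)\pi/n)/\sin((k-1)\pi/n)$, and the same comparison with Proposition~\ref{prop:spectral-gap} (using $\sin(k\pi/n)/\sin((k-1)\pi/n)=1+O(1/n)$) for the final asymptotic.
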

\begin{proof}[Proof of Lemma~\ref{lem:lambda_I2}]
For all $\ell\in \llbracket 0,k\rrbracket$, we have
\begin{align*}
\lambda^{(\ell)}_{I_2}&= \frac{e_\ell(\omega^{-1},\omega,\dots,\omega^{k-2},\omega^k)}{e_\ell(1,\omega,\dots,\omega^{k-1})}\, .
\end{align*}
Using that 
$$e_\ell(\omega^{-1},\omega,\dots,\omega^{k-2},\omega^k)=e_\ell(1,\omega,\dots,\omega^{k-1})+(\omega^{-1}-1)e_{\ell-1}(\omega,\dots,\omega^{k-2})+(\omega^k-\omega^{k-1})e_{\ell-1}(\omega,\dots,\omega^{k-2}),$$ 
we have
\begin{align*}
\lambda^{(\ell)}_{I_2}&=1-(1-\omega^{-1}-\omega^k+\omega^{k-1})\frac{e_{\ell-1}(\omega,\dots,\omega^{k-2})}{e_\ell(1,\dots,\omega^{k-1})}\\
&= 1-\omega^{\ell-1}(1-\omega^{-1}-\omega^k+\omega^{k-1})\frac{e_{\ell-1}(1,\dots,\omega^{k-3})}{e_\ell(1,\dots,\omega^{k-1})}\\
&= 1-\omega^{\ell-1}(1-\omega^{-1}-\omega^k+\omega^{k-1})\frac{\omega^{\tfrac{(\ell-1)(\ell-2)}{2}}{k-2\choose \ell-1}_\omega}{\omega^{\tfrac{\ell(\ell-1)}{2}}{k\choose \ell}_\omega}\\
&= 1-(1-\omega^{-1}-\omega^k+\omega^{k-1})\frac{(1-\omega^\ell)(1-\omega^{k-\ell})}{(1-\omega^k)(1-\omega^{k-1})}\\
&= 1-\omega^{-\tfrac{k-1}{2}}(1-\omega^{-1}-\omega^k+\omega^{k-1})\frac{\sin\left(\tfrac{\ell\pi}{n}\right)\sin\left(\tfrac{(k-\ell)\pi}{n}\right)}{\sin\left(\tfrac{k\pi}{n}\right)\sin\left(\tfrac{(k-1)\pi}{n}\right)}\\
&=1-4\frac{\sin\left(\tfrac{\pi}{n}\right)}{\sin\left(\tfrac{(k-1)\pi}{n}\right)}\sin\left(\tfrac{\ell\pi}{n}\right)\sin\left(\tfrac{(k-\ell)\pi}{n}\right)\, ,
\end{align*}
where for the last equality we used that
\begin{align*}
\omega^{-\tfrac{k-1}{2}}(1-\omega^{-1}-\omega^k+\omega^{k-1})= (\omega^{1/2}-\omega^{-1/2})(\omega^{-k/2}-\omega^{k/2})= 4\sin\left(\tfrac{\pi}{n}\right)\sin\left(\tfrac{k\pi}{n}\right)\, .
\end{align*}
In particular, $\lambda_{I_2}^{(\ell)}=\lambda_{I_2}^{(-\ell)}$ for $\ell\in\llbracket -k,-1\rrbracket$. We thus have
\[
\lambda_{I_2}=1-4\frac{\sin\left(\tfrac{\pi}{n}\right)}{\sin\left(\tfrac{(k-1)\pi}{n}\right)}\sum_{\ell=0}^{k} (p_\ell+p_{-\ell})\sin\left(\tfrac{\ell\pi}{n}\right)\sin\left(\tfrac{(k-\ell)\pi}{n}\right)\, \cdot
\]
%By Lemma~\ref{prop:spectral-gap} and the fact that $\sin\left(\tfrac{(k-1)\pi}{n}\right)=\sin\left(\tfrac{k\pi}{n}\right)\left(1+O\left(\tfrac{1}{k}\right)\right)$, we have, when $k-L\gg\log k$, 
%\[
%\lambda_{I_2}=1-2\left(1+o\left(\frac{1}{\log k}\right)\right)\gamma\, .
%\]
\end{proof}

\begin{proof}[Proof of Proposition~\ref{prop:lower-bound}]
Let $\Phi=\Delta^{-1}\varphi_{I_1}$ and recall that
\[
\Phi(I)=\frac{\overline{S_I(\xi(I_1))}}{\overline{S_I(\xi(I_0))}}=\frac{\overline{S_{I_1}(\xi(I))}}{\overline{S_{I_1}(\xi(I_0))}}\, ,
\]
Writing $\Var\left(\Phi(X_t)\right)=\E[|\Phi(X_t)|^2]-|\E[\Phi(X_t)]|^2$, we already know that $|\E[\Phi(X_t)]|^2=(1-\gamma)^{2t}$. Moreover,
\[
\E[|\Phi(X_t)|^2]=\frac{\E[S_{I_1}(\xi(X_t))\overline{S}_{I_1}(\xi(X_t))]}{|S_{I_1}(\xi(I_0))|^2}\, .
\]
Letting $I'_1=I_0+(-1,0,\dots,0)$ and using Pieri's formula, we have
\[
S_{I_1}\overline{S}_{I_1}=S_{I_1}S_{I'_1}=S_{I_2}+S_{I_0}\, ,
\]
where $ I_2 = I_0 +(1,0,\dots,0,-1)$. Since $S_{I_0}$ is constant equal to~$1$ and since $\dfrac{S_{I_2}}{S_{I_2}(\xi(I_0))}=\overline{\Delta^{-1}\varphi_{I_2}}$, we get
\[
\E[|\Phi(X_t)|^2]=\frac{1}{|S_{I_1}(\xi(I_0))|^2}\left( (\overline{\lambda}_{I_2})^tS_{I_2}(\xi(I_0))+1\right)\, .
\]
Using that $\lambda_{I_2}\in\mathbb{R}$ by Lemma~\ref{lem:lambda_I2} and that $S_{I_2}(\xi(I_0))=|S_{I_1}(\xi(I_0))|^2-1$, we have
\[
\E[|\Phi(X_t)|^2]\leq (\lambda_{I_2})^t+\frac{1}{|S_{I_1}(\xi(I_0))|^2}\, \cdot 
\]
In particular, letting $t$ tend to $+\infty$, we have, for $X\sim\mu$,
\[
\Var\left(\Phi(X)\right)\leq \frac{1}{|S_{I_1}(\xi(I_0))|^2}\, ,\, \cdot 
\]
which is less than $\tfrac{c}{k^2}$ for $n$ large enough and for $c>0$ some universal positive constant. Applying Lemma~\ref{lem:lower-bound-tv}, we see that for $n$ large enough and for all $t\in\N$,
\[
\cD(t)\geq \frac{(1-\gamma)^{2t}}{2(\lambda_{I_2})^t-(1-\gamma)^{2t}+\tfrac{4c}{k^2}}\, \cdot 
\]
And by Lemma~\ref{lem:lambda_I2}, under Assumption~\ref{ass:X-subgaussian} and if $t=\tfrac{\log k +s}{\gamma}$,
\[
\cD(t)\geq \frac{e^{-2s}}{e^{-2s}+4c}+o(1)\, \cdot 
\]
\end{proof}

\section{Upper bound in Theorem~\ref{thm:main}}\label{sec:upper-bound}
To prove the upper bound, we use the inequality 
\begin{equation}\label{eq:inequality_l2_l1}
4\cD(t)^2\leq \cD_2^2(t)\, ,
\end{equation}
where $\cD_2^2(t)$ is the squared $\ell^2(\mu)$-distance defined by
\[
\cD_2^2(t)=\sum_{I\in B_{k,n}}\mu(I)\left(\frac{P^t(I_0,I)}{\mu(I)}-1\right)^2
\]
Using the decomposition in the orthonormal basis $(f_J)_{J\in B_{k,n}}$ (for the weighted scalar product $\langle\cdot,\cdot\rangle_\mu$), we have
\[
\frac{P^t(I_0,I)}{\mu(I)}=\sum_{J\in B_{k,n}}\lambda_J^tf_J(I_0)f_J(I)=1+\sum_{J\neq I_0}\lambda_J^tf_J(I_0)f_J(I)\, ,
\]
so that
\begin{equation}\label{eq:L_2_distance}
\mathcal{D}^2_2(t)=\left\| \sum_{J\neq I_0}\lambda_J^tf_J(I_0)f_J\right\|_{2,\mu}^2=\sum_{J\neq I_0}\vert d(J)\vert^2\vert\lambda_{J}\vert^{2t},
\end{equation}
where $d(J)=f_J(I_0)=\left|\frac{V(\xi(J))}{V(\xi(I_0))}\right|=\left|\frac{\prod_{1\leq i<j\leq k}\sin\left(\frac{\pi(J_j-J_i)}{n}\right)}{\prod_{1\leq i<j\leq k}\sin\left(\frac{\pi(j-i)}{n}\right)}\right|$. The fact that $I_0$ is indeed the worst starting point for the $\ell^2$-distance follows from the inequality $|S_J(\xi(I)|\leq |S_J(\xi(I_0)|$, which implies that $|f_J(I)|\leq |f_J(I_0)|$.

The goal of the section is to show the following proposition, yielding an upper bound on $\mathcal{D}_2(t)$ for $s>0$.
\begin{proposition}\label{prop:upper_bound_1}
Let $\eta\in (0,1/2)$ and $\delta,K_g,K_a>0$. For any sequence of Markov kernels belonging to $\mathcal{A}(\eta,\delta,K_g,K_a)$, there exists $C,c>0$ such that 
$$\mathcal{D}_2\left(\frac{1}{\gamma}(\log n+s)\right) \leq C\left(\Gamma(2s+O(1))+e^{-ck}\right),$$
where $\Gamma(s)=\sum_{\lambda\in\mathcal{P}}e^{-s\vert \lambda\vert}$.
\end{proposition}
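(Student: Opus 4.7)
The plan is to bound $\cD_2^2(t)$ directly from its spectral decomposition~\eqref{eq:L_2_distance} by estimating each term $|d(J)|^2|\lambda_J|^{2t}$ separately. Every $J\in B_{k,n}\setminus\{I_0\}$ is encoded by a nonzero Young diagram $\lambda=(\lambda_1\ge\cdots\ge\lambda_k\ge 0)$ fitting inside the $k\times(n-k)$ rectangle via $J(i)=(k-i)+\lambda_i$, with $|\lambda|=\sum_i\lambda_i$. The target is a per-$J$ bound of the form $|d(J)|^2|\lambda_J|^{2t}\le C^{|\lambda|}e^{-2s|\lambda|}$ for ``typical'' $\lambda$ (say with all parts bounded by a power of $\log n$), together with an exponentially small total contribution from the atypical ones.

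For the dimension, the explicit formula $d(J)=\prod_{i<j}\sin(\pi(J(i)-J(j))/n)/\sin(\pi(j-i)/n)$ combined with $\sin(\pi x/n)\le \pi x/n$ and the lower bound $\sin(\pi j/n)\gtrsim j/n$ for $1\le j\le k$ (available under Assumption~\ref{ass:number-particles}) gives a bound of order $(Cn)^{|\lambda|}$ times a shrinking combinatorial factor. For the eigenvalue, I would invoke Proposition~\ref{prop:estimates_J} as a black box: its expected net effect is $|\lambda_J|^2\le\exp(-c\gamma|\lambda|)$ for typical $\lambda$, with $c>0$ depending only on $\eta,\delta,K_g,K_a$. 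At $t=\gamma^{-1}(\log n+s)$ this gives $|\lambda_J|^{2t}\le C^{|\lambda|}n^{-2|\lambda|}e^{-2s|\lambda|}$, and multiplying by the dimension bound cancels the polynomial factor $n^{|\lambda|}$ and leaves $C^{|\lambda|}e^{-2s|\lambda|}$ per $J$.

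Summing over $J$ produces the square of $\Gamma(2s+O(1))$ appearing in the statement once $\cD_2$ is squared: the square arises naturally because a circular configuration $J$ near $I_0$ is encoded by a \emph{pair} of partitions, for example the strict partition of original positions of $I_0$ missing from $J$ and the strict partition of new positions in $J$ outside $I_0$, each contributing one factor of $\Gamma$. The residual $e^{-ck}$ term accounts for the atypical $J$ whose associated $\lambda$ has a part exceeding a power of $\log n$: for those, the sharp eigenvalue bound fails, but the cruder estimate $|\lambda_J|\le 1-c$ coming from Assumption~\ref{ass:X-aperiodicity} together with a crude count of such configurations should push their joint contribution below $e^{-2ck}$. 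The main obstacle is Proposition~\ref{prop:estimates_J} itself: since $\lambda_J=\sum_\ell p_\ell\lambda_J^{(\ell)}$ averages $q$-deformed ratios whose individual behavior depends subtly on the alignment of the parts of $\lambda$ with the scale $\ell$, the contributions of small and large $\ell$ must be combined without losing the crucial $e^{-c\gamma|\lambda|}$ decay, which is precisely where the sub-Gaussian concentration of $p$ near $\E[|X|]$ becomes essential.
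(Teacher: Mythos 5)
Your skeleton (spectral decomposition \eqref{eq:L_2_distance}, dimension bound, eigenvalue estimates from Proposition~\ref{prop:estimates_J}, split into typical/atypical configurations, pair-of-partitions bookkeeping giving $\Gamma(2s+O(1))$) matches the paper's, but two steps as stated would fail, and they are exactly the points where the paper has to work. First, the per-configuration bound $|\lambda_J|^2\leq e^{-c\gamma|\lambda|}$, with $|\lambda|$ the size of the partition encoding $J$ relative to $I_0$, is false for rotated configurations. Take $J=m\cdot I_0$: every individual eigenvalue has modulus one, $\lambda_J^{(\ell)}=e^{2i\pi m\ell/n}$, so $|\lambda_J|=|\Phi_p(2\pi m/n)|$ and the decay comes \emph{only} from phase cancellation across $\ell$, at rate of order $\gamma\min\{m^2,\gamma^{-1}\}$ by Assumption~\ref{ass:X-aperiodicity} --- not at rate $\gamma|\lambda|\asymp\gamma k m$ as your bound requires (for $m=1$ the partition is $1^k$, $|\lambda|=k$, yet $1-|\lambda_J|\asymp\gamma$). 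The same phenomenon affects every rotation of every near-$I_0$ class. This is why the paper quotients by the shift action, keeps the phases, and controls the orbit sums $\sum_m|\sum_\ell p_\ell e^{-2i\pi m\ell/n}\lambda_\omega^{(\ell)}|^{2t}$ using $\Phi_p$ in the term $S_4$; it also needs control of the \emph{arguments} of $\lambda_{J_\omega}^{(\ell)}$ (the second half of Proposition~\ref{prop:estimates_J}(3)), not just the moduli, so that the phase coming from the configuration does not destroy the cancellation coming from aperiodicity. In other words, Assumption~\ref{ass:X-aperiodicity} is needed for the \emph{main} term (near-diagonal classes and their rotations), not, as you propose, only for the atypical tail.

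Second, your treatment of the atypical configurations is also gapped: Assumption~\ref{ass:X-aperiodicity} concerns $\Phi_p(\theta)=\sum_\ell p_\ell e^{i\ell\theta}$ and gives no bound of the form $|\lambda_J|\leq 1-c$ for a general far-away $J$ (whose $\lambda_J^{(\ell)}$ are not pure phases); indeed when $\E[|X|]=O(1)$ one can have $1-|\lambda_J|=O(1/\log k)$ for configurations in $\mathcal{J}_1$. Moreover, even if a uniform constant gap held, it would not suffice: at $t=\gamma^{-1}(\log n+s)$ with $\gamma\asymp n^{-2}$ this only yields $|\lambda_J|^{2t}\leq e^{-cn\log n}$, which loses against the dimension factor $d(J)^2$ that can be as large as $e^{Ck^2}$ with $k\asymp n$. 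The paper instead needs the $k^2/\log k$-scaled decay of Proposition~\ref{prop:estimates_J}(1) for each $\ell$, combined with the sub-Gaussian concentration of $\gamma_X$ (Lemma~\ref{lem:gamma_sub_gaussian}) to transfer that decay from individual $\gamma_\ell$ to the averaged eigenvalue, and a separate comparison argument (Proposition~\ref{prop:estimates_J}(2)) for $\mathcal{J}_2$. So the route you sketch cannot be closed as written without reinstating precisely the orbit/phase analysis and the strong far-field estimates that constitute the paper's proof.
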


Since $\lim_{s\rightarrow +\infty}\Gamma(s)=0$, Proposition~\ref{prop:upper_bound_1} yields the upper bound in Theorem~\ref{thm:main}.
%Proposition \ref{prop:lower-bound} and Proposition \ref{prop:upper_bound_1} yields then the main result of the paper.
%\begin{proof}[Proof of Theorem \ref{thm:main}]
%The lower bound of Theorem \ref{thm:main} is the content of Proposition \ref{prop:lower-bound}. For the upper bound, by \eqref{eq:inequality_l2_l1} and by Proposition \ref{prop:upper_bound_1},
%\begin{align*}
%\limsup_{n\rightarrow +\infty}\sup_{\substack{k\in[\eta n,(1-\eta)n]\\p\in \mathcal{A}_{K_g,K_a,c_a}(k)}}\mathcal{D}\left(\frac{1}{\gamma}(\log n+s)\right)\leq& \limsup_{n\rightarrow +\infty}\sup_{\substack{k\in[\eta n,(1-\eta)n]\\p\in \mathcal{A}_{K_g,K_a,c_a}(k)}}\frac{1}{2}\mathcal{D}_2\left(\frac{1}{\gamma}(\log n+s)\right)\\
%\leq &\frac{1}{2}\sqrt{C\Gamma(s+O(1))}
%\end{align*}
%for $C>1$ and $O(1)$ only depending on $\eta, K$ and $c$. Since $\lim_{s\rightarrow +\infty}\Gamma(s)=0$, we deduce that 
%$$\lim_{s\rightarrow+\infty}\limsup_{n\rightarrow +\infty}\sup_{\substack{k\in[\eta n,(1-\eta)n]\\p\in \mathcal{A}_{K_g,K_a,c_a}(k)}}\mathcal{D}\left(\frac{1}{\gamma}(\log n+s)\right)=0.$$
%\end{proof}

\subsection{Bounds on $\lambda^{\ell}_J$}\label{subsec:summary_bounds_alpha}
To achieve the proof of Proposition \ref{prop:upper_bound_1}, we need to bound $\left\vert\lambda^{\ell}_J\right\vert$ for any $J\in B_{k,n}$ and $-k\leq \ell\leq k$. 

We will give different types of bound depending on the shape of $J$. For $J\in B_{k,n}$, denote by  
$$\mu_J=\sum_{i=1}\delta_{2\pi (J_i-(k-1)/2)/n)}\mod ]-\pi,\pi]$$ 
the corresponding finite measure on $]-\pi,\pi]$ and by $\hat{J}$ the support of $\mu_J$. The set of finite measures on $\mathcal{M}_1(]-\pi,\pi])$ is considered as a metric space with the $1$-Wassertein distance $W_1$ defined by 
$$W_1(\mu,\mu')=\sup\left(\left\vert \int_{-\pi}^\pi f d\mu-\int_{-\pi}^\pi f d\mu'\right\vert,\,f:]-\pi,\pi]\rightarrow\mathbb{R}\text{ $1$-Lipschitz}\right)$$
for $\mu,\mu'\in \mathcal{M}_1(]-\pi,\pi])$. For any pair $\mu=\sum_{i=1}^k\delta_{a_i},\mu'=\sum_{i=1}^k\delta_{b_i}\in\mathcal{M}_1(]-\pi,\pi])$ of discrete measures with support of same cardinality, there exists a transport map $T:Supp(\mu)\rightarrow Supp(\mu')$ such that 
$$W_1(\mu,\mu')=\int_{-\pi}^\pi\left\vert T(x)-y\right\vert d\mu(x)d\mu(y),$$
see \cite[Introduction and Section 7.1]{Villani}. When, $\mu=\mu_J,\mu'=\mu_{J'}$ for some $J,J'\in B_{k,n}$, this relation simplifies to 
$$W_1(\mu_J,\mu_{J'})= \sum_{x\in\widehat{J'}}\left\vert T(x)-x\right\vert,$$
with the condition $T:\widehat{J'}\rightarrow \widehat{J}$ is bijective. In the specific case where $J'=I_0$, let us denote by $T^J$ the corresponding map. As the next lemma shows, we can suppose without loss of generality that $\vert T(x)\vert\geq \vert x\vert$ for all $x\in\widehat{I_0}$.
\begin{lemma}\label{lem:increasing_transport_map}
There exists an optimal transport map $T:\widehat{I_0}\rightarrow \widehat{J}$ such that $\vert T(x)\vert\geq\vert x\vert$ for all $x\in\widehat{I_0}$.
\end{lemma}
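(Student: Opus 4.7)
My plan is to produce the desired $T$ explicitly by fixing the intersection $C := \widehat{I_0}\cap\widehat{J}$ pointwise and matching the remainder in monotone fashion. Enumerate $A := \widehat{I_0}\setminus\widehat{J} = \{a_1<\cdots<a_m\}$ and $B := \widehat{J}\setminus\widehat{I_0} = \{b_1<\cdots<b_m\}$, where $m := |A| = |B|$ since $|\widehat{I_0}| = |\widehat{J}| = k$, and define $T_0$ by $T_0(x) = x$ for $x\in C$ and $T_0(a_i) = b_i$ for $1\leq i\leq m$. The condition $|T_0(x)|\geq|x|$ will then be immediate from the observation that $\widehat{I_0}$ consists precisely of the $k$ grid points of smallest absolute value in $]-\pi,\pi]$, giving $r_0 := \max_{x\in\widehat{I_0}}|x| = \pi(k-1)/n$ and $\min_{z\in\widehat{I_0}^c}|z| \geq \pi(k+1)/n > r_0$; fixed points of $T_0$ satisfy $|T_0(x)| = |x|$ trivially, and each $T_0(a_i)\in B\subset\widehat{I_0}^c$ satisfies $|T_0(a_i)| > r_0\geq|a_i|$.

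The substantive point is to certify that $T_0$ is $W_1$-optimal. For this I would invoke the one-dimensional displacement identity
\[
\sum_{x\in\widehat{I_0}} |x-T(x)| = \int_{-\pi}^{\pi} N_T(t)\,dt,
\]
valid for any bijection $T\colon\widehat{I_0}\to\widehat{J}$, where $N_T(t)$ counts the arrows of $T$ straddling $t$, and split $N_T = L_T + R_T$ according to the direction of crossing. A direct count gives the key identity $L_T(t)-R_T(t) = F_{\widehat{I_0}}(t) - F_{\widehat{J}}(t)$, with $F_E(t) := \#(E\cap(-\infty,t])$, which is independent of $T$. Hence $N_T(t)\geq |F_{\widehat{I_0}}(t) - F_{\widehat{J}}(t)|$ with equality iff $L_T(t)R_T(t) = 0$; this both reproduces the classical lower bound $\int|F_{\widehat{I_0}} - F_{\widehat{J}}|\,dt$ on the transport cost and reduces optimality of $T_0$ to checking that at every $t$ all crossing arrows of $T_0$ go in one and the same direction.

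This last check is essentially immediate from the joint monotonicity of the enumerations. The fixed points of $T_0$ never contribute to $N_{T_0}$, so only the arrows $(a_i,b_i)$ need consideration. A simultaneous left-to-right crossing $(a_i,b_i)$ (giving $a_i<t<b_i$) and right-to-left crossing $(a_j,b_j)$ (giving $b_j<t<a_j$) at the same $t$ would force both $i<j$ and $j<i$, which is impossible. I do not foresee any real obstacle in executing this plan: the only point that needs a modicum of care is the explicit identification of $\widehat{I_0}$ as the $k$ grid points closest to $0$, because it is precisely this property that places $\widehat{I_0}^c$ at distance strictly greater than $r_0$ from $0$ and thereby yields $|T_0(x)|\geq|x|$ without any further work.
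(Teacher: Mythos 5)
Your proof is correct, but it takes a genuinely different route from the paper's. The paper starts from an arbitrary optimal transport map (whose existence it quotes from the cited literature) and repairs violations of $\vert T(x)\vert\geq\vert x\vert$ by an exchange argument: if $\vert T(x)\vert<\vert x\vert$, then $y=T(x)$ must itself lie in $\widehat{I_0}$, and swapping the images of $x$ and $y$ cannot increase the cost by the triangle inequality; iterating removes all violations. You instead exhibit an explicit candidate $T_0$ (identity on $\widehat{I_0}\cap\widehat{J}$, order-preserving matching of $\widehat{I_0}\setminus\widehat{J}$ with $\widehat{J}\setminus\widehat{I_0}$) and certify its optimality directly through the one-dimensional crossing-number identity $\sum_x\vert x-T(x)\vert=\int N_T$, $L_T-R_T=F_{\widehat{I_0}}-F_{\widehat{J}}$, so that optimality reduces to the absence of oppositely directed crossings, which your monotone matching rules out; the inequality $\vert T_0(x)\vert\geq\vert x\vert$ then falls out of the observation that $\widehat{I_0}$ is exactly the block of $k$ grid points of smallest modulus, so every point of $\widehat{J}\setminus\widehat{I_0}$ has modulus at least $\pi(k+1)/n>\pi(k-1)/n=\max_{x\in\widehat{I_0}}\vert x\vert$ (both arguments ultimately lean on this same structural fact about $\widehat{I_0}$, and both implicitly use the linear metric on $]-\pi,\pi]$ that underlies the paper's definition of $W_1$). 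What your approach buys is self-containedness and an explicit description of an optimal map, avoiding the somewhat delicate iteration/termination bookkeeping of the paper's swap argument; what the paper's approach buys is brevity, needing only the triangle inequality once the existence of some optimal bijection is granted, without any CDF or crossing machinery.
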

\begin{proof}
Let $T$ be any  optimal transport map from $\mu_{I_0}$ to $\mu_J$, and suppose that $\vert T(x)\vert< \vert x\vert$ for some $x\in \widehat{I_0}$. Up to choosing another element satisfying this condition, we can suppose that $\vert T(y)\vert\geq \vert y\vert$ for all $y\in \widehat{I_0}$ with $\vert y\vert<\vert x\vert$. Then, since $\vert T(x)\vert< \vert x\vert$, necessarily $y:=T(x)\in \widehat{I_0}$ and, by the choice of $x$, $\vert T(y)\vert\geq \vert y\vert$. Define $\tilde{T}$ by $\tilde{T}(z)=T(z)$ for $z\not\in\{x,y\}$ and $\tilde{T}(x)=T(y)$, $T(y)=T(x)=y$. Then, by the triangular inequality,
$$\sum_{z\in \widehat{I_0}} \vert \tilde{T}(z)-z\vert=\sum_{z\in \widehat{I_0}} \vert T(z)-z\vert+\vert T(y)-x\vert-\vert T(y)-y\vert-\vert y-x\vert\leq\sum_{z\in \widehat{I_0}} \vert T(z)-z\vert,$$
and $\tilde{T}$ is also an optimal transport map. Remark that now either $\vert\tilde{T}(x)\vert=\vert T(y)\vert\geq\vert y\vert$, and $\vert\tilde{T}(x)\vert=\vert y\vert$ only if $T(y)=-y$ (indeed, $T(y)\not=y$ since $T(x)=y)$). In the latter case, since $T(-y)\not\in\{y,-y\}$, necessarily $\vert \tilde{T}(-y)\vert=\vert T(-y)\vert>\vert -y\vert=\vert y\vert$, and iterating the process yields a optimal bijective transport map $\tilde{T }$ such that $\tilde{T}(x)>T(x)$. Iterating the process, we can find $T$ optimal transport map such that $\vert T(x)\vert\geq\vert x\vert$. Iterating again the process, we can find an optimal transport map $T$ such that  $\vert T(x)\vert\geq\vert x\vert$ for all $x\in \widehat{I_0} $.
\end{proof}

There is a shift action of $\mathbb{Z}$ on $B_{k,n}$ given by 
$$t\cdot(J_1,\ldots,J_{k})=Sort(J_1+t[n],\ldots,J_k+t[n]),$$
where $Sort(x_1,\ldots,x_k)=(x_{i_1}\geq  \ldots\geq x_{i_k})$ is the decreasing rearrangement of $(x_1,\ldots,x_k)$. Remark that for $t\in\mathbb{Z}$,
$$\lambda_{t\cdot J}^{\ell}=\frac{e_{\ell}(\xi(t\cdot J))}{e_{\ell}(\xi(I_0))}=\frac{e^{\frac{2i\pi t\ell}{n}}e_{\ell}(\xi(J)}{e_{\ell}(\xi(I_0))}=e^{\frac{2i\pi t\ell}{n}}\lambda_{J},$$
so that the map $J\mapsto \left\vert\lambda_{J}^{\ell}\right\vert$ factors through $\mathcal{J}:=B_{k,n}/\mathbb{Z}$. For $\omega\in\mathcal{J}$, let us denote by $\left\vert\lambda_{\omega}^{\ell}\right\vert$ the common value of $\left\vert \lambda_{J}^{\ell}\right\vert$ for $J\in\omega$. Likewise, since $J\mapsto d(J)$ factors through $\mathcal{J}$, we denote by $d(\omega)$ the common value of $d(J)$ for all $J\in\omega$.

The set $\mathcal{J}$ will be partitioned into three sets, depending on the distance between an optimally chosen representative of $B_{k,n}/\mathbb{Z}$ and $I_0$. Let $C_1,C_2>0$, and introduce 
$$\mathcal{J}_1:=\left\{\omega\in \mathcal{J}, \forall J\in \omega, \int\mathbf{1}_{\vert T^J(x)\vert>k\pi/n+\frac{C_2}{\log n}}(\vert T^J(x)\vert-\vert x\vert)d\mu_{I_0}(x)\geq C_1\frac{k}{\log k}\right\},$$
$$\mathcal{J}_{2}=\left\{\omega\in \mathcal{J}\setminus \mathcal{J}_1,\forall J\in\omega,\exists x\in I_0,\, T^J(x)\not=x\text{ and }\min(\vert T^J(x)\vert-k\pi/n,k\pi/n-\vert x\vert)\geq\frac{C_2}{\log n} \right\},$$
and finally
$$\mathcal{J}_3=\mathcal{J}\setminus (\mathcal{J}_1\cup \mathcal{J}_2).$$
An informal picture providing typical representative elements of the sets $\mathcal{J}_1, \mathcal{J}_2$ and $\mathcal{J}_3$ are given in Figure \ref{Fig:partition_J}

\begin{figure}[h!]
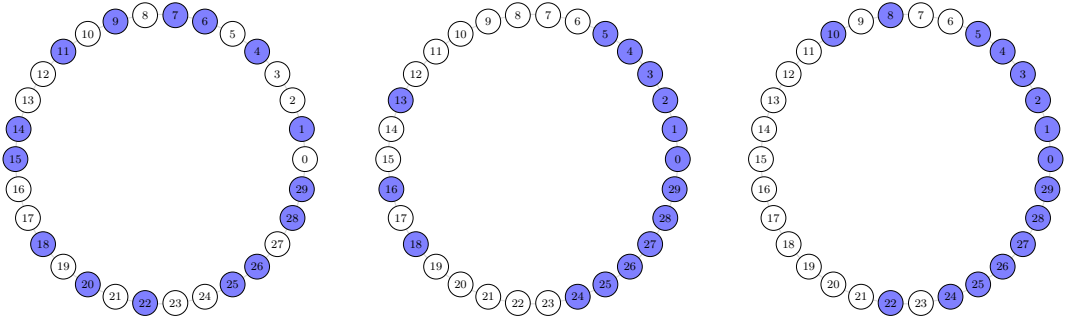

\scalebox{0.55}{\cercleDiscretFleches{29}{1,4,6,7,9,14,15,18,20,22,25,26,28,29,11}{}}\hspace{0.5cm}
\scalebox{0.55}{\cercleDiscretFleches{29}{24,25,26,27,28,29,0,1,2,3,4,5,13,16,18}{}}\hspace{0.5cm}
\scalebox{0.55}{\cercleDiscretFleches{29}{24,25,26,27,28,29,0,1,2,3,4,5,8,10,22}{}}
\caption{\label{Fig:partition_J} Typical representative elements in classes of $\mathcal{J}_1$, $\mathcal{J}_2$ and $\mathcal{J}_3$ for $B_{15,30}$. The last picture is the representative $J_{\omega}$ of a class $\omega\in\mathcal{J}_3$.}
\end{figure}

The elements of $\mathcal{J}_3$ are the closest ones to $I_0$ and will be the main contribution in the sum \eqref{eq:L_2_distance}. They represent equivalence classes $\omega$ including a configuration $J$ for which the corresponding transport map satisfies $T^{J}(x)=x$ if $\vert x\vert\leq \frac{k\pi}{n}-\frac{C_2}{\log n}$ and $\vert T^{J}(x)\vert\leq \frac{k\pi}{n}+\frac{C_{2}}{\log n}$ otherwise. For each such $J$, there exists a unique $t\in\llbracket -\frac{C_2}{\log n},\frac{C_2}{\log n}\rrbracket$ such that $\mu_{t\cdot J}$ has as many positive and negative atoms. Since $t\cdot J\in \omega$, let us set $J_{\omega}=t\cdot J$ to denote this particular representative element. Then, there exist a pair of partitions $\tau^{\omega}=(\mu^{\omega},\mu^{\omega})$ defined by 
$$\mu^{\omega}=\frac{2\pi}{n}Sort\left(T^{J_\omega}(x)-x, x\in \widehat{I_0}\cap \mathbb{R}^+\right),\,\nu^{\omega}=Sort\left(x-T^{J_\omega}(x),x\in \widehat{I_0}\cap \mathbb{R}^-\right).$$
The pair $(\mu^{\omega},\nu^{\omega})$ completely characterizes $\omega$ and satisfies $\max(\mu^{\omega}_1,\nu^{\omega}_1)\leq \frac{2C_2 n}{\pi \log n}$ and $\max(\ell(\mu^{\omega}),\ell(\nu^{\omega}))\leq \frac{2C_2 n}{\pi \log n}$, where $\ell(\mu)$ denotes the length of the partition $\mu$. In the sequel, we write 
$$\vert \tau^{\omega}\vert=\vert \mu^{\omega}\vert+\vert \nu^{\omega}\vert.$$
For exemple, the representative $J_{\omega}$ given in Figure \ref{Fig:partition_J} corresponds to the partition $\mu^{\omega}=(3,2)$ and $\nu^{\omega}=(1)$, yielding $\vert \tau^{\omega}\vert=5+1=6$.

In Section \ref{sec:estimates_superlog} and Section \ref{sec:estimates_lowerlog}, we will prove the following estimates, depending on $\mathcal{J}_{r}, r=1,2,3$.
\begin{proposition}\label{prop:estimates_J}
\begin{enumerate}
\item There exists $\kappa$ such that for all $\omega\in \mathcal{J}_1$,
$$\left\vert \lambda^{\ell}_{\omega}\right\vert=O\left(e^{-\kappa C_1\frac{\gamma_{\ell}k^2}{\log k}}\right).$$
\item Let $c_2,t_0>0$. For $C_2$ large enough, for all $\omega\in \mathcal{J}_2$, there exists $\tilde{\omega}\in\mathcal{J}_3$ such that for $t\geq t_0$
$$d(\omega)^{\frac{\gamma_{\ell}}{t\log k}}\left\vert \lambda^{\ell}_{\omega}\right\vert\leq O\left( e^{\frac{-c_2k\gamma_{\ell}}{t\log k}}\right)d(\tilde{\omega})^{\frac{\gamma_{\ell}}{t\log k}}\left\vert \lambda^{\ell}_{\tilde{\omega}}\right\vert.$$
\item For all $\omega\in\mathcal{J}_3$,
$$\left\vert \lambda^{\ell}_{\omega}\right\vert=e^{-\gamma_{\ell}\vert \tau^\omega\vert\left(1+O\left(\frac{1}{\log k}\right)\right)}.$$
If $\vert \tau^\omega\vert= O\left(\frac{1}{\gamma_{\ell}}\right)$, there exists $J_{\omega}\in\omega$ such that 
$$\lambda^{\ell}_{J_\omega}=e^{-\gamma_{\ell}\vert \tau^{\omega}\vert \left(1+O\left(\frac{1}{\log k}\right)\right)+iO_{\mathbb{R}}\left(\frac{\ell}{n^2}\vert \tau^{\omega}\vert\right )}$$
for all $-k\leq \ell\leq k$.
\end{enumerate}
\end{proposition}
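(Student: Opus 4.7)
The plan is to analyze $\lambda^{(\ell)}_\omega = e_\ell(\xi(J_\omega))/e_\ell(\xi(I_0))$ by decomposing the transformation $I_0 \to J_\omega$ into a sequence of elementary one-step atom shifts along the optimal transport map $T^{J_\omega}$. Each such shift multiplies the eigenvalue by an explicit ratio of sines, computable via the $q$-identity $e_\ell(1,\omega,\dots,\omega^{k-1}) = \omega^{\ell(\ell-1)/2}\binom{k}{\ell}_\omega$ in the same spirit as the computation of $\lambda^{(\ell)}_{I_1}$ in~\eqref{eq:expression_lambda_I1_l}. The three regions $\mathcal{J}_1$, $\mathcal{J}_2$ and $\mathcal{J}_3$ correspond to three quantitatively different behaviors of the resulting product, and I would treat them separately.

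For part~(3), membership of $\omega$ in $\mathcal{J}_3$ constrains the displacement of $J_\omega$ relative to $I_0$ to scale $O(1/\log n)$, and the transport cost equals $|\tau^\omega|$ by construction of $\mu^\omega$ and $\nu^\omega$. Decomposing along the monotone transport provided by Lemma~\ref{lem:increasing_transport_map}, each elementary shift contributes a factor whose logarithm expands as $-\gamma_\ell(1 + O(1/n)) + i\, O_\R(\ell/n^2)$ via a direct Taylor expansion of the sine product. Summing along a trajectory of total length $|\tau^\omega|$ yields $\log|\lambda^{(\ell)}_\omega| = -\gamma_\ell|\tau^\omega|(1 + O(1/\log k))$, the $1/\log k$ error tracking the cumulative second-order contributions, which stay under control thanks to the additional constraint $|\tau^\omega| = O(1/\gamma_\ell)$ used in the refined statement. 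The accumulation of the imaginary parts then produces the announced phase $O_\R(\ell|\tau^\omega|/n^2)$.

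For part~(1), I would bound $|\lambda^{(\ell)}_\omega|$ submultiplicatively by a product indexed by $T^J$: each atom transported beyond $k\pi/n + C_2/\log n$ yields a factor whose modulus is at most $\exp(-c\gamma_\ell n\,(|T^J(x)|-|x|))$ after re-expressing the elementary sine ratios through~\eqref{eq:expression_gammal}. The mass bound defining $\mathcal{J}_1$ then gives $\sum(|T^J(x)|-|x|) \geq C_1 k/\log k$ over the atoms transported into the far region, producing $|\lambda^{(\ell)}_\omega| = O(\exp(-\kappa C_1 \gamma_\ell k^2/\log k))$. Part~(2) is then a comparison step: starting from $\omega \in \mathcal{J}_2$, I would construct $\tilde\omega \in \mathcal{J}_3$ by retracting the offending atom whose image under $T^{J_\omega}$ sits in the intermediate band back to its nearest admissible position, and then compare $d(\omega)|\lambda^{(\ell)}_\omega|$ with $d(\tilde\omega)|\lambda^{(\ell)}_{\tilde\omega}|$ using the explicit Vandermonde formula for $d$ and the elementary-step bound of part~(1) for the eigenvalue ratio. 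Choosing $C_2$ large enough relative to $c_2$ and $t_0$ then produces the announced gain $\exp(-c_2 k\gamma_\ell/(t\log k))$ once weighted by the $\gamma_\ell/(t\log k)$ power of $d$.

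The main obstacle is uniformity across $-k \leq \ell \leq k$: in part~(3) the leading term $-\gamma_\ell|\tau^\omega|$ becomes of the same order as its error whenever $\gamma_\ell \asymp 1/n^2$, that is, for $\ell$ of order one, and the imaginary contribution $\ell|\tau^\omega|/n^2$ simultaneously reaches the same scale as the real part. In this small-$\ell$ regime, the expansion must be pushed beyond second order, and the aperiodicity Assumption~\ref{ass:X-aperiodicity} becomes essential to rule out the cancellations in the sum over $\ell$ that would otherwise destroy the exponential decay. This is precisely what motivates the paper's split into a \emph{super-log} analysis for $\ell$ above a power of $\log n$ (Section~\ref{sec:estimates_superlog}) and a separate \emph{sub-log} analysis for the complementary range (Section~\ref{sec:estimates_lowerlog}).
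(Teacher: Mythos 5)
There is a genuine gap at the heart of your argument: the claim that each elementary one-step shift along the transport map multiplies the eigenvalue by ``an explicit ratio of sines, computable via the $q$-identity'' is only true at the very first step. The computation of $\lambda^{(\ell)}_{I_1}$ in~\eqref{eq:expression_lambda_I1_l} works because, after removing the moved particle, the remaining particles of $I_0$ form a geometric progression $1,\omega,\dots,\omega^{k-2}$, so that $e_{\ell-1}/e_\ell$ evaluates through the $q$-binomial identity. For an intermediate configuration $J$ along your path, moving one atom from $\omega^a$ to $\omega^{a+1}$ gives $e_\ell(\xi(J'))/e_\ell(\xi(J))=1+(\omega^{a+1}-\omega^{a})\,e_{\ell-1}\bigl(\xi(J)\setminus\{\omega^{a}\}\bigr)/e_\ell(\xi(J))$, and this ratio of symmetric polynomials at a \emph{generic} configuration is precisely the quantity that has no closed form and whose control is the entire difficulty. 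The paper handles it by two different mechanisms: for $\ell$ above a power of $\log n$, the Cauchy-integral representation $\alpha^{\ell}_J=\frac{1}{2\pi}\int e^{k g_J(re^{i\theta})-\ell\log r-i\ell\theta}d\theta$ and steepest descent at the stationary point $r$ (Lemmas~\ref{lem:stationary_point}--\ref{lem:integral_small_pertub_f}, \ref{lem:Case_J1}, \ref{lem:Case_J3}, \ref{lem:J2}), where the effect of displacing atoms is additive at the level of the integrand $g_J-g_{I_0}$ rather than at the level of eigenvalue ratios; and for $\ell$ below a power of $\log n$, Newton's identities reducing $e_\ell$ to power sums $p_1,p_2,p_3$ (Lemma~\ref{lem:asymptotic_el_p1}, Propositions~\ref{prop:case_J3_small}, \ref{prop:case_j2_small}, Lemma~\ref{lem:bound_J_1_small}). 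Your telescoping product, as stated, asserts the per-step factors $e^{-\gamma_\ell(1+O(1/n))+iO_{\mathbb{R}}(\ell/n^2)}$ and $e^{-c\gamma_\ell n(|T^J(x)|-|x|)}$ without any argument that survives beyond the first step, so parts (1)--(3) are not actually proved; moreover nothing in your sketch controls the oscillatory part of the saddle-point integral (the role of Lemma~\ref{lem:integral_small_pertub_f}), which is what turns a bound on $|e_\ell|$ into the complex asymptotics of the second half of part~(3).

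Two secondary corrections. First, your closing remark attributes the small-$\ell$ difficulty to cancellations in a sum over $\ell$ that Assumption~\ref{ass:X-aperiodicity} would rule out; but Proposition~\ref{prop:estimates_J} is an estimate for each fixed $\ell$ and its proof nowhere uses aperiodicity --- that assumption only enters later, in the proof of Proposition~\ref{prop:upper_bound_1}, to control the sum over the rotation index $m$ (the factor $\Phi_p(2\pi m/n)$), not a sum over $\ell$. The actual reason for the sub-logarithmic split is that the saddle-point error terms, of relative size $O(1/(kr))$ with $r\asymp\ell/k$, cease to be negligible when $\ell$ is only polylogarithmic, whence the switch to power sums. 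Second, in part~(3) the constraint $|\tau^\omega|=O(1/\gamma_\ell)$ is not what keeps the $O(1/\log k)$ error of the modulus estimate under control (that estimate holds for all of $\mathcal{J}_3$); it is only needed for the refined statement with the complex phase. Your outline of part~(2) (retraction of offending atoms plus a Vandermonde comparison $d(J)\leq\kappa^{k}d(\tilde J)$) does match the paper's strategy in Lemma~\ref{lem:J2} and Proposition~\ref{prop:case_j2_small}, but there too the per-retraction gain must be justified through the integrand or through the growth of $p_1(\xi(J))$, not through an explicit per-move eigenvalue factor.
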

\begin{proof}
Let us first assume that $0\leq \ell\leq \frac{k}{2}$.

(1) This is the content of Lemma \ref{lem:Case_J1} for $\ell\geq c\log^2 k$ and $c$ large enough (depending on $C_1$) and Lemma \ref{lem:bound_J_1_small} for $\ell\leq C\log^2k$ for any $C>0$.

(2) This is the content of Proposition \ref{prop:case_J2} for $\ell \geq \log k^7$ and $c$ large enough (depending on $C_2$) and Proposition \ref{prop:case_j2_small} for $\ell\leq \log k^s$ for any $s>0$.

(3) This is the content of Lemma \ref{lem:Case_J3} for $\ell\geq \log k^5$ and of Proposition \ref{prop:case_J3_small} for $\ell \leq \log k^s$ for $s>0$, and the fact that $\gamma_{\ell}\asymp \frac{\ell}{n^2}$ for $\ell\leq \frac{k}{2}$, see \eqref{eq:expression_gammal}.

If $\frac{k}{2}\leq \ell\leq k$, then the expression \eqref{eq:spectral_gap_symetry} together with the fact that $e_{\ell}(\xi(J))=\overline{e_{k-\ell}(\xi(J))}e_{k}(\xi(J))$ implies that 
$$\vert \lambda_{J}^{\ell}\vert=\vert\lambda_{J}^{k-\ell}\vert\text{ and }\lambda_{J^{\omega}}^{\ell}=\overline{\lambda_{J^{\omega}}^{k-\ell}}e^{i\frac{2\pi(\vert \mu^{\omega}\vert-\vert\nu^{\omega}\vert)}{n}}.$$
The first relation implies directly (1), (2) and the first part of (3). For the second part, this is implied by the second relation together with the fact that
$$\arg(\lambda_{J^{\omega}}^{\ell})=O\left(\frac{\vert \mu^{\omega}\vert+\vert\nu^{\omega}\vert}{n}\right)=O\left(\frac{\ell}{n^2}\vert \tau^{\omega}\vert\right)$$ 
for $k/2\leq \ell\leq k$ and $\eta n\leq k\leq (1-\eta)n$.

The case $\ell\leq 0$ is deduced from the case $\ell\geq 0$ by \eqref{eq:spectral_gap_symetry}.
\end{proof}
\subsection{Proof of the upper bound}

\begin{proof}[Proof of Proposition \ref{prop:upper_bound_1}]
Recall from \eqref{eq:L_2_distance} that 
$$\mathcal{D}_2^2(t)=\sum_{J\in B_{k,n}\setminus \{I_0\}}\vert d(J)\vert^2\vert\lambda_{J}\vert^{2t}$$
with 
$$\lambda_{J}=\sum_{\ell=-k}^kp_{\ell}\frac{\alpha_{J}^{\ell} }{\alpha_{I_0}^{\ell} }.$$
Recall that $\lambda_{J+\mathbf{1}_k}^{\ell} =e^{-\frac{2i\pi \ell}{n}}\lambda_{J}^{\ell}$ and $d(J+\mathbf{1}_k)=d(J)$. Hence, using \eqref{eq:L_2_distance} and factoring through $\mathcal{J}$ yields
\begin{align*}
\mathcal{D}_2^2(t)=\sum_{J\in B_{k,n}\setminus \{I_0\}}\vert d(J)\vert^2\vert\lambda_{J}\vert^{2t}\leq&\sum_{\omega\in\mathcal{J}}\sum_{m=0}^n\vert d(\omega)\vert^2\left\vert\sum_{\ell=-k}^kp_{\ell}e^{-\frac{2i\pi m \ell}{n}}\lambda_{\omega}^{\ell}\right\vert^{2t}-1\\
%\leq& \sum_{\omega\in\mathcal{J}}\sum_{m=0}^n\left\vert\sum_{\ell=-k}^kp_{\ell}e^{-\frac{2i\pi m \ell}{n}}\vert d(\omega)\vert^{\frac{1}{t}}\lambda_{\omega}^{\ell}\right\vert^{2t}-1\\
\leq & S_1+S_2+S_3+S_4,
\end{align*}
where,  
\begin{eqnarray*}
&S_1=\sum_{\omega\in\mathcal{J}_1}(k+1)\vert d(\omega)\vert^2\left\vert\sum_{\ell=-k}^kp_{\ell}\lambda_{\omega}^{\ell}\right\vert^{2t},\quad &S_2=\sum_{\omega\in\mathcal{J}_2}(k+1)\vert d(\omega)\vert^2\left\vert\sum_{\ell=-k}^kp_{\ell}\lambda_{\omega}^{\ell}\right\vert^{2t},\\
&S_3=\sum_{\substack{\omega\in\mathcal{J}_3\\\vert \tau^{\omega}\vert\geq \log n}}(k+1)\vert d(\omega)\vert^2\left\vert\sum_{\ell=-k}^kp_{\ell}\lambda_{\omega}^{\ell}\right\vert^{2t},\quad &S_4=\sum_{\substack{\omega\in\mathcal{J}_3\\\vert \tau^{\omega}\vert\leq \log n}}\sum_{m=0}^k\left\vert\sum_{\ell=-k}^kp_{\ell}e^{-\frac{2i\pi m \ell}{n}}\vert d(\omega)\vert^{\frac{1}{t}}\lambda_{\omega}^{\ell}\right\vert^{2t}-1.
\end{eqnarray*}
 Let $t=\tfrac{\log k +s}{\gamma}$, with $s>0$.
 
\textbf{Bound of $S_1$ :} 
Note first that $d(\omega)\leq \alpha^{k^2}$ for some constant $\alpha>0$ independent of $k$. Then, by Proposition~\ref{prop:estimates_J}(1), 
$$\lambda_{\omega}^{\ell}\leq e^{-\kappa C_1\frac{\gamma_{\ell}k^2}{\log k}}.$$
when $\omega\in \mathcal{J}_1$ for some $\kappa$ only depending on $C_1$. Hence, by Lemma \ref{lem:gamma_sub_gaussian}, there exists $c>0$ such that
\begin{align*}
\sum_{\ell=-k}^kp_{\ell}\lambda_{\omega}^{\ell}\leq\sum_{\ell=-k}^kp_{\ell} e^{-\kappa C_1\frac{\gamma_{\ell}k^2}{\log k}}\leq& e^{-\kappa C_1\frac{\gamma k^2}{\log k}}\sum_{\ell=-k}^kp_{\ell} e^{-\kappa C_1\frac{(\gamma_{\ell}-\gamma)k^2}{\log k}}\\
\leq& e^{-\kappa C_1\frac{\gamma k^2}{\log k}}e^{\frac{\kappa^2C_1^2 c^2 \gamma k^2}{(\log k)^2}} \leq e^{-\kappa C_1\frac{\gamma k^2}{2\log k}}
\end{align*}
for $k$ large enough. Choose $C_1$ large enough so that $\kappa C_1>2\log \alpha$,
$$\vert d(\omega)\vert^2\left\vert\sum_{\ell=-k}^kp_{\ell}\lambda_{\omega}^{\ell}\right\vert^{t}\leq \alpha^{2k^2} e^{-\kappa C_1 k^2}=e^{-\Omega(k^2)}\, .$$
Now, using that $\#\mathcal{J}_1=o(C^k)$ for some constant $C>0$, we get
\begin{equation}\label{eq:final_S1}
S_1\leq (k+1)e^{-\Omega(k^2)}\#\mathcal{J}_1=e^{-\Omega(k^2)}\, .
\end{equation}

\textbf{Bound of $S_2$ :} 
Let $c_2>0$ large enough to choose later. Let $\omega\in \mathcal{J}_2$. By Proposition~\ref{prop:estimates_J}(2) and Proposition~\ref{prop:estimates_J}(3), for $C_2$ large enough (independent of $\omega$), there exists $\tilde{\omega}\in\mathcal{J}_3$ such that 
$$d(\omega)^{\frac{\gamma_{\ell}}{t k\log k}}\vert \lambda_\omega^{\ell} \vert\leq  d(\tilde{\omega})^{\frac{2\gamma_{\ell}}{t\log k}}\exp\left(-\gamma_{\ell}\vert \tau^{\tilde{\omega}}\vert \left(1+O\left(\frac{1}{\log k}\right)\right)-\frac{k\gamma_{\ell}c_2}{t\log k}\right).$$
Therefore,
\begin{align*}
S_2=&\sum_{\omega\in\mathcal{J}_2}(k+1)\left\vert\sum_{\ell=-k}^kp_{\ell}\vert d(\omega)\vert^{\frac{\gamma}{\log k}}\lambda_{\omega}^{\ell}\right\vert^{2\frac{\log k+s}{\gamma}}\\
\leq&\sum_{\omega\in\mathcal{J}_2}(k+1)\left\vert\vert d(\tilde{\omega})\vert^{\frac{\gamma}{\log k}}e^{-kc_2\frac{\gamma}{\log k}}\sum_{\ell=-k}^kp_{\ell}\exp\left(-\gamma_{\ell}\vert \tau^{\tilde{\omega}}\vert\left(1+O\left(\frac{1}{\log k}\right)\right)\right)\right\vert^{2\frac{\log k+s}{\gamma}}
\end{align*}
By Lemma~\ref{lem:gamma_sub_gaussian}, we have
\begin{align}
&\exp\left(-\gamma\vert \tau^{\tilde{\omega}}\vert\left(1+O\left(\frac{1}{\log k}\right)\right)\right)\sum_{\ell=-k}^kp_{\ell}\exp\left(-(\gamma_{\ell}-\gamma)\vert \tau^{\tilde{\omega}}\vert\left(1+O\left(\frac{1}{\log k}\right)\right)\right)\nonumber\\
\leq& \exp\left(-\gamma\vert \tau^{\tilde{\omega}}\vert\left(1+O\left(\frac{1}{\log k}\right)\right)\right)\exp\left(\frac{K^2\gamma}{k^2}\vert \tau^{\tilde{\omega}}\vert^2\left(1+O\left(\frac{1}{\log k}\right)\right)\right)\label{eq:averaging_exp_S3}
\end{align}
For $\tilde{\omega}\in\mathcal{J}_3$, $\vert \tau^{\tilde{\omega}}\vert=O\left(\frac{k^2}{\log k^2}\right)$, so that 
$$\exp\left(\frac{K^2\gamma}{k^2}\vert \tau^{\tilde{\omega}}\vert^2\left(1+O\left(\frac{1}{\log k}\right)\right)\right)= \exp\left(\gamma\vert \tau^{\tilde{\omega}}\vert O\left(\frac{1}{\log k}\right)\right).$$
Hence,
\begin{align*}
S_2\leq& \sum_{\omega\in\mathcal{J}_2}(k+1)\left\vert\vert d(\tilde{\omega})\vert^{\frac{\gamma}{\log k}}e^{-kc_2\frac{\gamma}{\log k}}\exp\left(-\gamma\vert \tau^{\tilde{\omega}}\vert\left(1+O\left(\frac{1}{\log k}\right)\right)\right)\right\vert^{2\frac{\log k+s}{\gamma}}\\
\leq&(k+1)e^{-kc_2}\#\mathcal{J}_2\max_{\omega\in\mathcal{J}_3}d(\omega)^{2}\exp\left(-2\vert \tau^{\omega}\vert \left(\log k+s+O(1)\right)\right).
\end{align*}
Since $\#\mathcal{J}_2\leq C^{k}$ for some constant $C>0$ and $d(\omega)=O(\exp(\vert \tau^{\omega}\vert \log k))$ for $\omega\in\mathcal{J}_3$, for $c_2$ large enough and $s$ large enough, we have 
\begin{equation}\label{eq:final_S2}
S_2=O(e^{-ck}),
\end{equation}
for some $c>0$.

\textbf{Bound on $S_3$ :} By Proposition~\ref{prop:estimates_J}(3) and \eqref{eq:averaging_exp_S3}, we have 
\begin{align*}
S_3\leq& (k+1)\sum_{\substack{\omega\in\mathcal{J}_3\\\vert \tau^{\omega}\vert\geq \log n}}d(\omega)^2\left\vert\sum_{\ell=-k}^kp_{\ell}\lambda_{\omega}^{\ell}\right\vert^{\frac{\log k+s}{\gamma}}
\\
\leq& (k+1)\sum_{\substack{\omega\in\mathcal{J}_3\\\vert \tau^{\omega}\vert\geq \log n}}d(\omega)^2\left\vert\sum_{\ell=-k}^kp_{\ell}\exp\left(-\gamma_{\ell}\vert \tau^{\omega}\vert \left(1+O\left(\frac{1}{\log k}\right)\right)\right)\right\vert^{\frac{\log k+s}{\gamma}}\\
\leq& (k+1)\sum_{\substack{\omega\in\mathcal{J}_3\\\vert \tau^{\omega}\vert\geq \log n}}d(\omega)^2\left\vert\exp\left(-\gamma\vert \tau^{\omega}\vert \left(1+O\left(\frac{1}{\log k}\right)\right)\right)\right\vert^{\frac{\log k+s}{\gamma}}\\
\leq& (k+1)\sum_{\substack{\omega\in\mathcal{J}_3\\\vert \tau^{\omega}\vert\geq \log n}}d(\omega)^2\exp\left(-2(\log k+s)\vert \tau^{\omega}\vert \left(1+O\left(\frac{1}{\log k}\right)\right)\right.
\end{align*}
Since $d(\omega)=O(e^{\vert \tau^{\omega}\vert (\log k-C)})$ for any $C>0$ as $\vert \tau^{\omega}\vert\rightarrow +\infty$, 
\begin{equation}\label{eq:final_S3}
S_3\leq C(k+1)\left(\sum_{\substack{\lambda,\mu\in \mathcal{P}\\\vert\lambda\vert,\vert\mu\vert \geq \log n}}e^{-(s+2)(\vert\lambda\vert+\vert \mu\vert)}\right)\leq C\Gamma(2s),
\end{equation}
where $C>0$ and $\Gamma(s)=\sum_{\lambda\in \mathcal{P}}e^{-s\vert \lambda\vert}$.

\textbf{Bound on S4 :}
For $t=\frac{\log k+s}{\gamma}$, singling out $\omega_0=\overline{I_0}$ in $S_4$ yields
\begin{align*}
S_4=&\sum_{\substack{\omega\in\mathcal{J}_3\\\vert \tau^{\omega}\vert\leq \log n}} d(\omega)\sum_{m=0}^n\left\vert\sum_{\ell=-k}^kp_{\ell}e^{-\frac{2i\pi m \ell}{n}}\lambda_{J_\omega}^{\ell}\right\vert^{\frac{2(\log(k)+s)}{\gamma}}-1\\
=&\left(\sum_{m=0}^n\left\vert \sum_{\ell=-k}^kp_{\ell}e^{-\frac{2i\pi m \ell}{n}}\right\vert^{\frac{2(\log(k)+s)}{\gamma}}-1\right)+\sum_{\substack{\omega\in\mathcal{J}_3\\\vert \tau^{\omega}\vert\leq \log n,\omega\not= \omega_0}} d(\omega)\sum_{m=0}^n\left\vert\sum_{\ell=-k}^kp_{\ell}e^{-\frac{2i\pi m \ell}{n}}\lambda_{J_\omega}^{\ell}\right\vert^{\frac{2(\log(k)+s)}{\gamma}}\\
:=&S_4^1+S_4^2.
\end{align*}
Using Assumption~\ref{ass:X-aperiodicity}, the first term is bounded by
\begin{align*}
S_4^1=\left(\sum_{m=0}^n\left\vert \sum_{\ell=-k}^kp_{\ell}e^{-\frac{2i\pi m \ell}{n}}\right\vert^{\frac{2(\log(k)+s)}{\gamma}}-1\right)=&\sum_{m=1}^n\left\vert \sum_{\ell=-k}^kp_{\ell}e^{-\frac{2i\pi m \ell}{n}}\right\vert^{\frac{2(\log(k)+s)}{\gamma}}\\
=&\sum_{m=1}^n\left\vert \Phi_p\left(\frac{2\pi m}{n}\right)\right\vert^{\frac{2(\log(k)+s)}{\gamma}}\\
\leq &\sum_{m=1}^ne^{-K_a\min\left\{1,\, \frac{4\pi^2\E[|X|]m^2}{n^2}\right\} \frac{2(\log(k)+s)}{\gamma}}\, .
\end{align*}
Using that $\gamma\asymp\frac{\E[|X|]}{n^2}$, we have
\[
S_4^1=\sum_{m=1}^ne^{-\Omega\left(\min\left\{\frac{1}{\gamma},\, m^2\right\}(\log (k)+s\right)}=O\left(e^{-cs}\right)\, 
\]
for some constant $c>0$. For the second term, by Proposition \ref{prop:estimates_J} (3),
\begin{align*}
S_4^2=&\sum_{\substack{\omega\in\mathcal{J}_3\\\vert \tau^{\omega}\vert\leq \log n,\omega\not= \omega_0}} d(\omega)^2\sum_{m=0}^n\Bigg\vert\exp\left(-\gamma\vert \tau^{\omega}\vert \left(1+O\left(\frac{1}{\log k}\right)\right)\right)\\
&\hspace{3cm}\sum_{\ell=-k}^kp_{\ell}e^{-\frac{2i\pi m l}{n}}\exp\left(-(\gamma_{\ell}-\gamma)\vert \tau^{\omega}\vert \left(1+O\left(\frac{1}{\log k}\right)\right)+iO_{\mathbb{R}}\left(\frac{\ell\vert \tau^{\omega}\vert}{n^2}\right)\right)\Bigg\vert^{\frac{2(\log(k)+s)}{\gamma}}.
\end{align*}
For $\vert \tau^{\omega}\vert\leq \log n$, $(\gamma_{\ell}-\gamma)\vert \tau^{\omega}\vert =O(\frac{\log k}{k})$, so that 
\begin{align*}
&\exp\left(-(\gamma_{\ell}-\gamma)\vert \tau^{\omega}\vert \left(1+O\left(\frac{1}{\log k}\right)\right)+iO_{\mathbb{R}}\left(\frac{\ell}{n^2}\right)\right)=1+O\left((\gamma_{\ell}-\gamma)\vert \tau^{\omega}\vert \right)+O\left(\frac{\ell\vert \tau^{\omega}\vert}{n^2}\right),
\end{align*}
and  by Lemma~\ref{lem:gamma_sub_gaussian} and the fact that $\gamma\asymp \frac{\mathbb{E}[X]}{n^2}$, see \eqref{eq:equiv_gamma_expect},
$$\sum_{\ell=-k}^k p_\ell \left(|\gamma_{\ell}-\gamma|+O\left(\frac{\ell}{n^2}\right)\right)=O\left( \frac{\sqrt{\gamma}}{k}+\frac{\mathbb{E}[\vert X\vert]}{n^2}\right) \sqrt{\sum_{\ell=-k}^k p_\ell (\gamma_{\ell}-\gamma)^2}\leq \frac{C\mathbb{E}[\vert X\vert]}{n^2}$$
for some $C>0$. Hence,
\begin{align*}
&\left\vert\sum_{\ell=-k}^kp_{\ell}e^{-\frac{2i\pi m l}{n}}\exp\left(-(\gamma_{\ell}-\gamma)\vert \tau^{\omega}\vert \left(1+O\left(\frac{1}{\log k}\right)\right)+iO_{\mathbb{R}}\left(\frac{\ell}{n^2}\right)\right)\right\vert\leq\Phi_p\left(\frac{2\pi m}{n}\right)+\frac{C\mathbb{E}[\vert X\vert]\vert \tau^{\omega}\vert }{n^2}.
\end{align*}
for some $C>0$. Assumption~\ref{ass:X-aperiodicity} and \eqref{eq:equiv_gamma_expect} then yield
\begin{align*}
\Phi_p\left(\frac{2\pi m}{n}\right)+\frac{C\mathbb{E}[\vert X\vert]\vert \tau^{\omega}\vert }{n^2}\leq& 1-K_a\min\left\{\mathbb{E}[\vert X\vert]\frac{4\pi^2\min(m,n-m)^2}{n^2},1\right\}+\frac{C\mathbb{E}[\vert X\vert]\vert \tau^{\omega}\vert }{n^2}.
\end{align*}
If $\min(m,n-m)^2\geq c\vert \tau^{\omega}\vert$ for some $c>0$ large enough, then for $k$ large enough so that $K_a\geq \frac{C\mathbb{E}[\vert X\vert]\log n}{n^2}$,
$$K_a\min\left\{\mathbb{E}[\vert X\vert]\frac{4\pi^2\min(m,n-m)^2}{n^2},1\right\}-\frac{C\mathbb{E}[\vert X\vert]\vert \tau^{\omega}\vert }{n^2}\geq 0.$$
For such $m$, we then have 
\begin{align*}
&\Bigg\vert\exp\left(-\gamma\vert \tau^{\omega}\vert \left(1+O\left(\frac{1}{\log k}\right)\right)\right)\\
&\hspace{3cm}
\sum_{\ell=-k}^kp_{\ell}e^{-\frac{2i\pi m l}{n}}\exp\left(-(\gamma_{\ell}-\gamma)\vert \tau^{\omega}\vert \left(1+O\left(\frac{1}{\log k}\right)\right)+iO_{\mathbb{R}}\left(\frac{\ell}{n^2}\right)\right)\Bigg\vert^{\frac{2(\log(k)+s)}{\gamma}}\\
\leq &\exp\left(-2\vert \tau^{\omega}\vert \left(\log k+s+O(1)\right)\right).
\end{align*}
Hence, there exists $c>0$ such that
\begin{align}
&\sum_{\min(m,n-m)^2\geq c\vert\tau ^{\omega}\vert}\Bigg\vert\exp\left(-\gamma\vert \tau^{\omega}\vert \left(1+O\left(\frac{1}{\log k}\right)\right)\right)\nonumber\\
&\hspace{3cm}
\sum_{\ell=-k}^kp_{\ell}e^{-\frac{2i\pi m l}{n}}\exp\left(-(\gamma_{\ell}-\gamma)\vert \tau^{\omega}\vert \left(1+O\left(\frac{1}{\log k}\right)\right)+iO_{\mathbb{R}}\left(\frac{\ell}{n^2}\right)\right)\Bigg\vert^{\frac{2(\log(k)+s)}{\gamma}}\nonumber\\
\leq& \sum_{\min(m,n-m)^2\geq c\vert\tau ^{\omega}\vert}\exp\left(-2\vert \tau^{\omega}\vert \left(\log k+s+O(1)\right)\right)\nonumber\\
\leq &\exp\left(-2\vert \tau^{\omega}\vert \left(\log k+s+O(1)\right)\right)\label{eq:S_42_large_m}
\end{align}
for some $c>0$. Next, if $\min(m,n-m)^2\leq c\vert\tau ^{\omega}\vert$, let us use the former bound
\begin{align*}
&\left\vert\sum_{\ell=-k}^kp_{\ell}e^{-\frac{2i\pi m l}{n}}\exp\left(-(\gamma_{\ell}-\gamma)\vert \tau^{\omega}\vert \left(1+O\left(\frac{1}{\log k}\right)+iO(1)\right)\right)\right\vert\\
&\hspace{6cm}\leq \left\vert\sum_{\ell=-k}^kp_{\ell}\exp\left(-(\gamma_{\ell}-\gamma)\vert \tau^{\omega}\vert \left(1+O\left(\frac{1}{\log k}\right)\right)\right)\right\vert\\
&\hspace{6cm}=\exp\left(\gamma\vert \tau^{\omega}\vert O\left(\frac{1}{\log k}\right)\right).
\end{align*} 
Hence, since $\#\{m,\min(m,n-m)^2\leq c\vert\tau ^{\omega}\vert\}=2\sqrt{c\vert\tau ^{\omega}\vert}$,
\begin{align}
&\sum_{\min(m,n-m)^2\leq c\vert\tau ^{\omega}\vert}\Bigg\vert\exp\left(-\gamma\vert \tau^{\omega}\vert \left(1+O\left(\frac{1}{\log k}\right)\right)\right)\nonumber\\
&\hspace{4cm}
\sum_{\ell=-k}^kp_{\ell}e^{-\frac{2i\pi m l}{n}}\exp\left(-(\gamma_{\ell}-\gamma)\vert \tau^{\omega}\vert \left(1+O\left(\frac{1}{\log k}\right)\right)+iO_{\mathbb{R}}\left(\frac{\ell}{n^2}\right)\right)\Bigg\vert^{\frac{2(\log(k)+s)}{\gamma}}\nonumber\\
&\leq 2\sqrt{c\vert\tau ^{\omega}\vert}\exp\left(-2\vert \tau^{\omega}\vert \left(\log k+s+O(1)\right)\right).\label{eq:S_42_low_m}
\end{align}
Putting \eqref{eq:S_42_large_m} and \eqref{eq:S_42_low_m} together yields for $k$ large enough, using the fact that $d(\omega)\leq k^{\vert \tau^{\omega}\vert}$,
\begin{align*}
S_4^2\leq& \sum_{\substack{\omega\in\mathcal{J}_3\\\vert \tau^{\omega}\vert\leq \log n,J\not= I_0}} d(\omega)^2\sqrt{c\vert\tau ^{\omega}\vert}\exp\left(-2\vert \tau^{\omega}\vert \left(\log k+s+O(1)\right)\right)\\
=&\sum_{\substack{\omega\in\mathcal{J}_3\\\vert \tau^{\omega}\vert\leq \log n,J\not= I_0}} \exp\left(-2\vert \tau^{\omega}\vert \left(s+O(1)\right)\right)\\
\leq& C\Gamma(2s+O(1)),
\end{align*}
for some constant $C$ and $O(1)$ depending on the constants $\delta,\eta,K_a$ and $K_g$. Finally
\begin{equation}\label{eq:final_S4}
S_4\leq C\Gamma(2s+O(1))+O\left(e^{-cs}\right) .
\end{equation}
Combining \eqref{eq:final_S1}, \eqref{eq:final_S2}, \eqref{eq:final_S3} and \eqref{eq:final_S4} yields 
\begin{align*}
\mathcal{D}\left(\frac{1}{\gamma}(\log k+s)\right)\leq& e^{-c_2k^2}+e^{-c_2k}+Ce^{-s\log n}\Gamma(2s)+C\Gamma(2s+O(1))+e^{-cs} \\
\leq&  C\left(\Gamma(2s+O(1))+e^{-c'k}\right)
\end{align*}
for some constant $C,c$ only depending on $k$ and the constants of the assumptions.
\end{proof}

\section{Estimates in the super-logarithmic regime}\label{sec:estimates_superlog}
Recall the definition of $C_2,C_1$ in the definition of $\mathcal{J}_1,\mathcal{J}_2$ and $\mathcal{J}_3$ at the end of Section \ref{subsec:summary_bounds_alpha}. 
The goal of this section is to prove Proposition \ref{prop:estimates_J} in the following cases :
\begin{enumerate}
\item $\omega\in\mathcal{J}_1$ and $\ell\geq c\left(\log k\right)^2$ for some $c>0$ depending on $C_2$,
\item $\omega\in\mathcal{J}_2$ for $C_2$ large enough and $\ell\geq (\log n)^7$,
\item $\omega\in\mathcal{J}_3$ and $\ell\geq (\log n)^5$.
\end{enumerate}
Throughout this section, we assume that 
$$0\leq\ell\leq k/2.$$

\subsection{Rewriting the eigenvalues as complex integrals}
Recall the identity
$$\prod_{j=1}^k(1+zx_j)=\sum_{\ell=0}^kz^{\ell} e_{\ell}(x_1,x_2,\ldots,x_k).$$
Hence, by Cauchy integral theorem,
$$\alpha^{\ell} _J=\frac{1}{2\pi}\int_{-\pi}^{\pi}\prod_{j=1}^k(1+re^{i\theta}e^{2i\pi (J(j)-(k-1)/2)/n})r^{-l}e^{-il\theta})d\theta=\frac{1}{2\pi}\int_{-\pi}^{\pi}e^{k \left(g_{J}(re^{i\theta})-\frac{l}{k}\log(r)-i\frac{l}{k}\theta\right)}d\theta$$
for any contour $\mathcal{C}$ around $0$, where
$$g_{J}(z)=\frac{1}{k}\sum_{j=1}^k \log(1+ze^{2i\pi (J(j)-(k-1)/2)/n}).$$

\subsection{Stationary point for the largest eigenvalue}
The goal being to compare $\alpha^{\ell} _J$ to $\alpha^{\ell} _{I_0}$, let us first give an asymptotic of $\alpha^{\ell} _{I_0}$ by using the steepest descent method, see \cite[Section VIII]{flajolet2009analytic}. In this case, setting $g:=g_{I_0}$,
\begin{align*}
g(z)=&\frac{1}{k}\sum_{j=1}^k \log(1+ze^{i\pi(k+1-2j)/n}).
\end{align*}
The first step is to find $r\in \mathbb{R}_{>0}$ such that $rg'(r)-\frac{l}{k}=0$. Let us first gather some technical properties on $g$.
\begin{lemma}\label{lem:study_g}
Set $m(z)=zg'(z)$. Then,
\begin{itemize}
\item $m'(z)>0$ for $z\in\mathbb{R}_{>0}$,
\item $\lim_{z\rightarrow 0}m(z)=0$ and $\lim_{z\rightarrow +\infty,z>0}m(z)=1$.
\end{itemize}
Moreover, for $s\in \mathbb{N}$, $\sup_{z>0}\vert \max(z^{s-1},1)g^{(s)}(z)\vert<+\infty$.
\end{lemma}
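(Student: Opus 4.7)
The plan is to exploit the conjugate symmetry of $\{\zeta_j\}$ to reduce everything to real analysis. Pairing $j$ with $k{+}1{-}j$ gives $\zeta_{k+1-j}=\overline{\zeta_j}$, so with $c_j=\cos\theta_j$ one has
$$g(z)=\frac{1}{2k}\sum_{j=1}^k\log(1+2zc_j+z^2)\quad\text{on }\mathbb{R}_{>0}.$$
Under Assumption~\ref{ass:number-particles}, $|\theta_j|\le\pi(k-1)/n\le\pi(1-\eta)$, so $1+2zc_j+z^2=(z+c_j)^2+\sin^2\theta_j\ge\sin^2(\pi\eta)>0$ uniformly; in particular $g$ is real-analytic on $\mathbb{R}_{>0}$.

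The two limits are read off from $P(z)=\prod_j(1+z\zeta_j)$: since $\sum_j\theta_j=0$ gives $\prod_j\zeta_j=1$, $P(z)\sim z^k$ as $z\to+\infty$, so $m(z)=zP'(z)/(kP(z))\to 1$; at the origin $P(0)=1$ and $zP'(0)=0$, so $m(z)\to 0$. For the derivative bound, differentiating termwise yields $g^{(s)}(z)=\frac{(-1)^{s-1}(s-1)!}{k}\sum_j\zeta_j^s(1+z\zeta_j)^{-s}$; combining the uniform lower bound $|1+z\zeta_j|\ge\sin(\pi\eta)$ on $(0,1]$ with the dual bound $|1+z\zeta_j|=z|\zeta_j+1/z|\ge z\sin(\pi\eta)$ on $[1,+\infty)$ (both consequences of $|1+z\zeta_j|^2=(z+c_j)^2+\sin^2\theta_j$) one obtains $\sup_{z>0}|\max(z^{s-1},1)g^{(s)}(z)|\le(s-1)!\sin^{-s}(\pi\eta)<\infty$.

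The core of the lemma is the monotonicity $m'(z)>0$. From the real form of $g$ and the algebraic identity $2zc_j=(1+2zc_j+z^2)-(1+z^2)$ one obtains
$$m(z)=\tfrac12+\tfrac{z^2-1}{2k}\sum_{j=1}^k(1+2zc_j+z^2)^{-1}.$$
Substituting $z=e^t$ and using $1+2e^tc_j+e^{2t}=2e^t(\cosh t+c_j)$ gives
$$m(e^t)=\tfrac12+\tfrac{\sinh t}{2k}\sum_{j=1}^k(\cosh t+c_j)^{-1},$$
which already displays the symmetry $m(e^t)+m(e^{-t})=1$. Differentiating in $t$ and noting that $\operatorname{sgn}(m'(z))=\operatorname{sgn}(\tfrac{d}{dt}m(e^t))$ reduces the claim to positivity of
$$\sum_{j=1}^k\frac{1+c_j\cosh t}{(\cosh t+c_j)^2}.$$

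This positivity is the main obstacle, because individual summands can be negative when $c_j<0$ and $\cosh t>1/|c_j|$. I would handle it by splitting each term as $(1+ac_j)/(a+c_j)^2=c_j/(a+c_j)+\sin^2\theta_j/(a+c_j)^2$ with $a=\cosh t$, so that the problem becomes showing
$$\sum_{j=1}^k\frac{c_j}{a+c_j}+\sum_{j=1}^k\frac{\sin^2\theta_j}{(a+c_j)^2}>0.$$
The second sum is non-negative; for the first I would exploit the explicit Newton power sums $\sum_j\zeta_j^\ell=\sin(\pi k\ell/n)/\sin(\pi\ell/n)$ together with a closed-form expression for $\sum_j(a+c_j)^{-1}$ (obtained as a residue computation, in the spirit of the classical identity $\sum_{j=0}^{n-1}(a-\cos(2\pi j/n))^{-1}=n\coth(n\operatorname{arccosh}(a)/2)/\sinh(\operatorname{arccosh}(a))$), reducing the required inequality to a monotonicity property of $\coth$ that holds uniformly under Assumption~\ref{ass:number-particles}. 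The symmetry $m(e^t)+m(e^{-t})=1$ implies $z^2m'(z)=m'(1/z)$, so one only needs to verify the inequality on $[1,+\infty)$, which is where the hyperbolic reformulation is most natural.
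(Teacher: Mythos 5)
Your treatment of the two limits and of the derivative bounds is correct, and in fact cleaner than the paper's: the uniform lower bound $|1+z\zeta_j|^2=(z+\cos\theta_j)^2+\sin^2\theta_j\geq \sin^2(\pi\eta)$ for $z>0$ (valid because $|\theta_j|\leq \pi(1-\eta)$, so the minimum over $z>0$ is either $1$ or $\sin^2\theta_j$) gives the bound on $\max(z^{s-1},1)g^{(s)}(z)$ with an explicit constant. The reduction of $m'(z)>0$ to the positivity of $\sum_j\frac{1+c_j\cosh t}{(\cosh t+c_j)^2}$, and the symmetry $m(e^t)+m(e^{-t})=1$, are also correct computations.

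However, the central claim $m'(z)>0$ is exactly the point you do not prove, and the route you sketch for it does not work. First, the split $\frac{1+ac_j}{(a+c_j)^2}=\frac{c_j}{a+c_j}+\frac{\sin^2\theta_j}{(a+c_j)^2}$ followed by ``second sum nonnegative, handle the first in closed form'' cannot succeed, because the first sum is genuinely negative in relevant cases (e.g.\ $k=5$, $n=6$, $a=1$ gives $\sum_j c_j/(1+c_j)=-5/6$): the nonnegative second sum is indispensable for the compensation, so positivity cannot be reduced to a statement about $\sum_j c_j/(a+c_j)$ alone. Second, the closed-form/residue identity you invoke (of the type $\sum_{j=0}^{n-1}(a-\cos(2\pi j/n))^{-1}=n\coth(\cdot)/\sinh(\cdot)$) is valid for the \emph{full} set of $n$ equally spaced angles over the circle, whereas here the $\theta_j$ only fill the arc $[-\pi(k-1)/n,\pi(k-1)/n]$; such partial sums admit no comparable closed form, so the announced ``monotonicity of $\coth$'' step has no starting point. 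The paper avoids this entirely: it writes $m'(z)=\frac1k\sum_j\frac{e^{i\theta_j}}{(1+ze^{i\theta_j})^2}$, compares the sum with the integral $\frac{n}{2\pi k}\int_{-k\pi/n}^{k\pi/n}\frac{e^{i\theta}}{(1+ze^{i\theta})^2}\,d\theta$ up to an $O(1/k^2)$ Riemann-sum error, and evaluates the integral exactly as $\frac{n}{k\pi}\cdot\frac{\sin(k\pi/n)}{|1+ze^{ik\pi/n}|^2}>0$, i.e.\ the positivity comes from an explicit boundary term in the asymptotic regime $k\to\infty$ in which the lemma is used. Either adopt such an integral-comparison argument or find a genuine proof of the exact positivity of $\sum_j\frac{1+c_j\cosh t}{(\cosh t+c_j)^2}$; as written, the key bullet of the lemma remains unproved.
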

\begin{proof}
Derivating $g(z)$ yields
$$m(z)=\frac{1}{k}\sum_{j=1}^k\frac{ze^{i\pi(k+1-2j)/n}}{1+ze^{i\pi(k+1-2j)/n}}=1-\frac{1}{k}\sum_{j=1}^k\frac{1}{1+ze^{i\pi(k+1-2j)/n}}.$$
This directly shows that 
$$\lim_{z\rightarrow 0}m(z)=0,\quad \lim_{z\rightarrow +\infty,z>0}m(z)=1.$$
Then, $m'(z)=\frac{1}{k}\sum_{j=1}^k\frac{e^{i\pi(k+1-2j)/n}}{(1+ze^{i\pi(k+1-2j)/n})^2}$. Since the function $\theta\mapsto\frac{e^{i\theta}}{(1+ze^{i\theta})^2}$ is $C^{\infty}$ on $]-\pi,\pi[$, a Riemann sum formula yields 
\begin{align*}
m'(z)=\frac{ n}{2\pi k}\int_{-k\pi/n}^{k\pi/n}\frac{e^{i\theta}}{(1+ze^{i\theta})^2}d\theta+O\left(\frac{1}{k^2}\right)=&\frac{ n}{2\pi ikz}\left(\frac{1}{1+ze^{-ik\pi/n}}-\frac{1}{1+ze^{ik\pi/n}}\right)+O\left(\frac{1}{k^2}\right)\\
=&\frac{- n}{k\pi z}\Im \frac{1}{1+ze^{ik\pi/n}}+O\left(\frac{1}{k^2}\right)>0. 
\end{align*}
Finally, note first that 
$$g^{(s)}(z)=\frac{1}{k}\sum_{j=1}^k\frac{e^{is\pi(k+1-2j)/n}(-1)^{s-1}(s-1)!}{(1+ze^{i\pi(k+1-2j)/n})^{s-1}}$$
is bounded on $[0,1]$. For $z\geq 1$,
  for $s\in\mathbb{N}_{>0}$ and $z\geq 1$
$$\vert z^{s-1}g^{(s)}(z)\vert =\left\vert\frac{1}{k}\sum_{j=1}^k\frac{z^{s-1}e^{is\pi(k+1-2j)/n}(-1)^{s-1}(s-1)!}{(1+ze^{i\pi(k+1-2j)/n})^{s-1}}\right\vert\leq \frac{(s-1)!}{(\min_{z>0}\vert z^{-1}+e^{ik\pi/n}\vert)^{s-1}},$$
so that $\sup_{z>0}\vert\max(1,z^{s-1}) g^{(s)}(z)\vert<+\infty$. 
\end{proof}

A consequence of Lemma \ref{lem:study_g} is the existence of a stationary point for $f$.
\begin{lemma}\label{lem:stationary_point}
There exists a unique $0<r<2$ such that $rg'(r)-\frac{l}{k}=0$. Moreover, $r=\frac{\sin\left(\frac{l\pi}{n}\right)}{\sin\left(\frac{(k-l)\pi}{n}\right)}+O\left(\frac{1}{k^2}\right)$.
\end{lemma}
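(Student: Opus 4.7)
The existence and uniqueness of $r$ follow immediately from Lemma \ref{lem:study_g}: the function $m(z)=zg'(z)$ is continuous on $(0,\infty)$, strictly increasing (since $m'(z)>0$), and satisfies $\lim_{z\to 0^+}m(z)=0$, $\lim_{z\to+\infty}m(z)=1$. Since $0\leq \ell/k\leq 1/2<1$ by assumption, there is a unique $r>0$ with $m(r)=\ell/k$. For the quantitative bound $r<2$, I would evaluate $m(1)$ by pairing the index $j$ with $k+1-j$: the associated exponents $\theta_j$ and $\theta_{k+1-j}$ are opposite, and a direct computation shows that for every $\theta$,
\[
\frac{e^{i\theta}}{1+e^{i\theta}}+\frac{e^{-i\theta}}{1+e^{-i\theta}}=\frac{2+2\cos\theta}{2+2\cos\theta}=1.
\]
Summing over the $\lfloor k/2\rfloor$ pairs (plus the middle index contributing $1/2$ if $k$ is odd) gives $m(1)=1/2$. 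Since $\ell/k\leq 1/2=m(1)$, monotonicity yields $r\leq 1<2$.

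For the asymptotic identity, the plan is to approximate the sum in $m(z)$ by its Riemann integral. Writing $\theta_j=\pi(k+1-2j)/n$, the spacing is $2\pi/n$ and the integration domain is $[-\pi k/n,\pi k/n]$, which by Assumption \ref{ass:number-particles} stays at distance at least $\pi\eta$ from the singularities $\theta=\pm\pi$ of $h(\theta)=ze^{i\theta}/(1+ze^{i\theta})$ for any $z\in(0,1]$. Using Lemma \ref{lem:study_g} to bound the derivatives of $h$ uniformly, the Euler--Maclaurin error is $O(1/k^2)$, and a direct antiderivative computation gives
\[
m(z)=\frac{n}{\pi k}\,\arg\!\bigl(1+ze^{i\pi k/n}\bigr)+O\!\left(\frac{1}{k^2}\right),
\]
uniformly for $z\in(0,1]$.

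I would then evaluate this formula at $z=r_0:=\sin(\ell\pi/n)/\sin((k-\ell)\pi/n)$. Setting $\alpha=\ell\pi/n$ and $\beta=(k-\ell)\pi/n$, the trigonometric identity $\sin\beta+\sin\alpha\cos(\alpha+\beta)=\sin(\alpha+\beta)\cos\alpha$ gives
\[
1+r_0 e^{i(\alpha+\beta)}=\frac{\sin(\alpha+\beta)}{\sin\beta}\,e^{i\alpha},
\]
so that $\arg(1+r_0 e^{i\pi k/n})=\alpha=\ell\pi/n$, and consequently $m(r_0)=\ell/k+O(1/k^2)$. Combining with $m(r)=\ell/k$ and the mean value theorem,
\[
|r-r_0|=\frac{|m(r)-m(r_0)|}{m'(\xi)}=O\!\left(\frac{1}{k^2}\right),
\]
provided $m'$ is bounded below on the relevant range. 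The lower bound on $m'$ follows from the same Riemann sum approximation applied to $m'(z)$ as computed in the proof of Lemma \ref{lem:study_g}, namely $m'(z)=-\frac{n}{\pi k z}\Im\bigl(1+ze^{ik\pi/n}\bigr)^{-1}+O(1/k^2)$, which stays uniformly positive for $z\in(0,1]$ thanks to $k/n\in[\eta,1-\eta]$.

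The main technical step will be the uniform control of the Riemann sum error: one must argue that the $O(1/k^2)$ remainder can be taken uniformly across all values of $z$ that can arise as $\ell$ varies in $\llbracket 0,k/2\rrbracket$, i.e.\ for $z\in[0,1]$. This is where Assumption \ref{ass:number-particles} plays its role, ensuring a uniform gap between the integration domain and the singularities of the integrand.
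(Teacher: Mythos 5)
Your proposal is correct and follows essentially the same route as the paper: existence and uniqueness from the monotonicity of $m(z)=zg'(z)$ in Lemma \ref{lem:study_g}, and the asymptotics from the midpoint Riemann-sum approximation $m(z)=\frac{n}{\pi k}\arg\bigl(1+ze^{ik\pi/n}\bigr)+O(1/k^2)$, uniform thanks to Assumption \ref{ass:number-particles}. The only (harmless, arguably cleaner) differences are that you compute $m(1)=\tfrac12$ exactly by the symmetry pairing, giving $r\leq 1$ rather than the paper's $r\leq 2$, and that you conclude by evaluating $m$ at the explicit candidate $r_0=\sin(\ell\pi/n)/\sin((k-\ell)\pi/n)$ and invoking the mean value theorem with a uniform lower bound on $m'$, whereas the paper solves the approximate equation $\frac{r\sin(k\pi/n)}{1+r\cos(k\pi/n)}=\tan(\ell\pi/n)+O(1/k^2)$ directly for $r$.
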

Remark that for $l\geq k/2$, a similar result would hold, except for the new range $r\geq 1$.
\begin{proof}
By the previous lemma,
 $z\mapsto zg'(z)$ is strictly increasing and
$$\lim_{z\rightarrow 0} zg'(z)-\frac{l}{k}=-\frac{l}{k},\quad \lim_{z\rightarrow+\infty}zg'(z)-\frac{l}{k}=\frac{k-l}{k}>0.$$
Hence, there exists a unique $r>0$ such that $rg'(r)-\frac{l}{k}=0$. Since $m(z)=\frac{n}{2k\pi}\int_{-\frac{k\pi}{n}}^{\frac{k\pi}{n}}\frac{ze^{i\theta}}{1+ze^{i\theta}}d\theta+O\left(\frac{1}{n^2}\right)$, 
\begin{align*}
m(1)=\frac{n}{2k\pi}\int_{-\frac{k\pi}{n}}^{\frac{k\pi}{n}}\frac{e^{i\theta}}{1+e^{i\theta}}d\theta+O\left(\frac{1}{k^2}\right)=&\frac{n}{2ik\pi}\left[\log(1+e^{\frac{k\pi}{n}})-\log(1+e^{-\frac{k\pi}{n}})\right]+O\left(\frac{1}{k^2}\right)\\
=&\frac{n}{k\pi}\arg(1+e^{\frac{k\pi}{n}})+O\left(\frac{1}{k^2}\right)\\
=&\frac{1}{2}+O\left(\frac{1}{k^2}\right).
\end{align*}
Thus, since $m$ is increasing, the equality $m(r)=\frac{l}{k}$ implies $r\leq 2$ .
Since $\theta\mapsto \frac{1}{1+re^{i\theta}}$ is $\mathcal{C}^2$ on $[-k\pi/n,k\pi/n]$ with second derivative bounded uniformly on $r=O(1)$,
\begin{align*}
rg'(r)=&\frac{1}{k}\sum_{j=1}^{k}\frac{re^{i\pi(k+1-2j)/n}}{1+re^{i\pi(k+1-2j)/n}}\\
=&\frac{n}{2\pi k}\int_{-k\pi/n}^{k\pi/n}\frac{re^{i\theta}}{1+re^{i\theta}}d\theta+O\left(\frac{1}{k^2}\right)\\
=&\frac{n}{2\pi ik}\left(\log(1+re^{ik\pi/n})-\log(1+re^{ik\pi/n})\right)+O\left(\frac{1}{k^2}\right)\\
=&\frac{ n}{\pi k}\text{Arg}\left(1+re^{ik\pi/n}\right)+O\left(\frac{1}{k^2}\right)=\frac{ n}{\pi k}\arctan\left(\frac{r\sin(k\pi/n)}{1+r\cos(k\pi/n)}\right)+O\left(\frac{1}{k^2}\right).
\end{align*}
Hence, the equation $rg'(r)=\frac{l}{k}$ is equivalent, for $l\leq k/2$, to
\begin{equation}\label{eq:first_rel_r_gamma}
\frac{r\sin(k\pi/n)}{1+r\cos(k\pi/n)}=\tan\left(\frac{l\pi}{n}\right)+O\left(\frac{1}{k^2}\right),
\end{equation}
yielding 
$$r=\frac{\tan\left(\frac{l\pi}{n}\right)}{\sin(k\pi/n)-\tan\left(\frac{l\pi}{n}\right)\cos(k\pi/n)}+O\left(\frac{1}{k^2}\right)=\frac{\sin\left(\frac{l\pi}{n}\right)}{\sin\left(\frac{(k-l)\pi}{n}\right)}+O\left(\frac{1}{k^2}\right).$$
This implies the second statement of the lemma.
\end{proof}

In the following, we set 
$$f(\theta)=g\left(r e^{i\theta}\right)-\frac{l}{k}\log r-\frac{l}{k}i\theta.$$
\begin{lemma}\label{lem:study_f}
As $\theta=o(1)$,
$$\Re f(\theta)=f(0)-\frac{\alpha}{2}\theta^2+O(r\theta^4),$$
where $\alpha=r g'(r)+r^2g''(r)>0$, and
$$\Im f(\theta)=O\left(r\theta^3\right).$$
\end{lemma}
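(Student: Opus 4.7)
The plan is to Taylor expand $f$ to third order around $\theta=0$ and control the fourth-order remainder. Since $f(\theta) - g(re^{i\theta}) = -\frac{\ell}{k}\log r - i\frac{\ell}{k}\theta$ is an affine function of $\theta$, only $f'(0)$ differs from the corresponding derivative of $g(re^{i\theta})$, while all higher derivatives of $f$ coincide with those of $\theta\mapsto g(re^{i\theta})$. Using the chain rule with $\frac{d}{d\theta}(re^{i\theta})=i\,re^{i\theta}$, a straightforward induction gives
\[
\frac{d^s}{d\theta^s}g(re^{i\theta}) = \sum_{j=1}^{s} c_{s,j}\, (re^{i\theta})^j g^{(j)}(re^{i\theta}),
\]
for explicit integer coefficients $c_{s,j}$, with in particular $c_{1,1}=i$, $c_{2,1}=-1$, $c_{2,2}=-1$, and $c_{3,1}=c_{3,3}=-i$, $c_{3,2}=-3i$.

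From these formulas I would first verify that $f'(0)=i\,m(r)-i\tfrac{\ell}{k}=0$ by the defining equation of $r$ in Lemma~\ref{lem:stationary_point}, and that
\[
f''(0)=-rg'(r)-r^2g''(r)=-\alpha,\qquad f'''(0)=-i\bigl(rg'(r)+3r^2g''(r)+r^3g'''(r)\bigr).
\]
The positivity of $\alpha$ follows at once from Lemma~\ref{lem:study_g}: since $m'(z)>0$ for $z>0$ and $m'(r)=g'(r)+rg''(r)$, one has $\alpha=rm'(r)>0$. The crucial structural observation is that $f'''(0)$ is \emph{purely imaginary}: for real $r>0$ each quantity $r^jg^{(j)}(r)$ is real (the $\phi_m=\pi(k+1-2m)/n$ come in opposite pairs, so $g(r)$ and all its derivatives at $r$ are real). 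This is why the $\theta^3$ term drops out of $\Re f(\theta)$ while it generates the leading $\Im f(\theta)$.

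The remaining task is to bound the fourth derivative uniformly on a neighborhood of $0$. The quantitative input is the estimate $\max(z^{s-1},1)|g^{(s)}(z)|=O(1)$ from Lemma~\ref{lem:study_g}, which implies $|r^jg^{(j)}(r)|=O(r)$ for every $j\geq 1$ (separating the cases $r\leq 1$ and $r\geq 1$). The mild obstacle is that we need this bound for complex arguments $z=re^{i\theta}$ rather than for real $z>0$: but the proof of Lemma~\ref{lem:study_g} rests on the uniform lower bound of $|1+re^{i(\theta+\phi_m)}|$, which under Assumption~\ref{ass:number-particles} remains bounded away from $0$ for all $\theta=o(1)$ and $r$ in the bounded range given by Lemma~\ref{lem:stationary_point}. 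Consequently $|f^{(4)}(\theta)|=O(r)$ uniformly for $\theta=o(1)$, and Taylor's theorem with Lagrange remainder gives
\[
f(\theta)=f(0)-\tfrac{\alpha}{2}\theta^2+\tfrac{1}{6}f'''(0)\theta^3+O(r\theta^4).
\]
Taking real and imaginary parts, and using that $f(0)=g(r)-\tfrac{\ell}{k}\log r\in\R$, $\Re f'''(0)=0$, and $|\Im f'''(0)|=O(r)$, yields both claimed estimates.
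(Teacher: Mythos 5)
Your proof is correct and follows essentially the same route as the paper's: a Taylor expansion in $\theta$ at $0$, with the stationarity condition $rg'(r)=\ell/k$ killing the first-order term, the reality of $g^{(s)}(r)$ (conjugate-paired angles) making the quadratic coefficient real and the cubic one purely imaginary, positivity of $\alpha$ coming from $m'(r)>0$, and the derivative bounds of Lemma~\ref{lem:study_g} giving the $O(r)$ controls. Your explicit justification that the fourth derivative is bounded by $O(r)$ uniformly for complex arguments $re^{i\theta}$ with $\theta=o(1)$ (via the uniform lower bound on $|1+re^{i(\theta+\phi_m)}|$) is a slightly more careful treatment of the remainder than the paper's, but it is the same argument in substance.
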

\begin{proof}
Since $g(\mathbb{R}_{>0})\subset \mathbb{R} $,
$$g^{(s)}(r)\in\mathbb{R}$$
for $s\in\mathbb{N}$. Hence, using the parity,
$$g(re^{i\theta})=g(r)+ir\theta g'(r)-\frac{rg'(r)+r^2g''(r)}{2}\theta^2+iO\left(\left(g^{(4)}(r) r^4+g^{(3)}(r)r^3+r^2g^{(2)}(r)+rg'(r)\right)\theta^3\right).$$
By the proof of Lemma \ref{lem:stationary_point}, $\alpha:=rg'(r)+r^2g''(r)=\frac{\partial}{\partial r}\left(rg'(r)\right)>0$ and by Lemma \ref{lem:stationary_point}, $g^{(4)}(r) r^4+g^{(3)}(r)r^3+r^2g^{(2)}(r)+rg'(r)=O(r)$. Using that $g'(r)-\frac{l}{k}i\theta=0$ yields
$$\Re f(\theta)=f(0)-\frac{\alpha}{2}\theta^2+O\left(r\theta^4\right),\,\Im f(\theta)=O\left(r\theta^3\right) $$
which implies the result.
\end{proof}

\subsection{Properties of the function $f(\theta)$}

\begin{lemma}\label{lem:first_bound_integral}
As $k$ goes to $\infty$, with $\theta_{0} =\rho \sqrt{\log(k)/(kr)} $ and $\rho$ large enough (independent of $\ell$),
$$\frac{1}{2\pi}\int_{[-\pi,\pi]\setminus[-\theta_0,\theta_0]}e^{k\Re f(\theta)}d\theta=O\left(\frac{r}{k\log k}\right)\alpha_{I_0}^{\ell} .$$
Moreover,
\begin{align*}
\alpha_{I_0}^{\ell} =\frac{1}{2\pi}\int_{-\pi}^{\pi}e^{k\Re f(\theta)}\cos(k\Im f(\theta))d\theta&=\frac{1}{2i\pi}\int_{-\pi}^{\pi}e^{k\Re f(\theta)}d\theta\left(1+O\left(\frac{1}{kr}\right)\right)\\
&=\frac{e^{kf(0)}}{\sqrt{2\pi kr^2f''(0)}}\left(1+O\left(\frac{1}{kr}\right)\right).
\end{align*}
\end{lemma}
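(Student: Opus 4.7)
The approach is a saddle-point analysis of $\alpha_{I_0}^{\ell}=\tfrac1{2\pi}\int_{-\pi}^{\pi}e^{kf(\theta)}d\theta$ built entirely on the data from Lemmas \ref{lem:study_g}, \ref{lem:stationary_point} and \ref{lem:study_f}. The key inputs are: $f'(0)=0$ and $f''(0)=-\alpha$ is real with $\alpha=rg'(r)+r^2g''(r)\asymp r$; the Taylor bounds $\Re f(\theta)=f(0)-\tfrac\alpha2\theta^2+O(r\theta^4)$ and $\Im f(\theta)=O(r\theta^3)$; and the symmetry $g_{I_0}(\overline z)=\overline{g_{I_0}(z)}$, which makes $\Re f$ even and $\Im f$ odd in $\theta$ and gives the first equality $\alpha_{I_0}^{\ell}=\tfrac1{2\pi}\int e^{k\Re f(\theta)}\cos(k\Im f(\theta))d\theta$ for free. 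The super-log assumption translates into $kr\gg \log^c k$ for large $c$, which is exactly what is needed to place the window $\theta_0=\rho\sqrt{\log k/(kr)}$ comfortably inside the Taylor regime.

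For the tail bound I will split $[-\pi,\pi]\setminus[-\theta_0,\theta_0]$ into a moderate range $\theta_0\leq|\theta|\leq\delta_0$ and a far range $|\theta|\in[\delta_0,\pi]$. On the moderate range, the Taylor estimate yields $\Re f(\theta)\leq f(0)-\tfrac\alpha4\theta^2$, and a standard Gaussian tail gives $\int_{|\theta|\geq\theta_0}e^{k\Re f}\,d\theta\leq C\,e^{kf(0)}k^{-\kappa\rho^2}/\sqrt{k\alpha}$, which is $o(r/(k\log k))\cdot\alpha_{I_0}^{\ell}$ as soon as $\rho$ is large enough (independently of $\ell$ and $n$). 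On the far range, the target is the deterministic gap $\Re g(re^{i\theta})\leq g(r)-c_{\delta_0} r$, uniform in $r\in(0,2]$ and $|\theta|\in[\delta_0,\pi]$. I will obtain it by rewriting $\Re g(re^{i\theta})=\tfrac1{2k}\sum_j\log(1+r^2+2r\cos(\theta+\phi_j))$, approximating by the corresponding Riemann integral (with $O(1/k^2)$ error via the $C^2$-bound from Lemma \ref{lem:study_g}), and computing directly that the shifted integral has vanishing first derivative and second derivative equal to $-4r\sin(k\pi/n)/(1+r^2+2r\cos(k\pi/n))\asymp -r$ at $\theta=0$; this yields a gap of size $r$, and the resulting tail bound $e^{kf(0)-c_{\delta_0}kr}$ is easily absorbed under $kr\gg\log^c k$.

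On the central window I perform the change of variables $u=\theta\sqrt{k\alpha}$. The Taylor expansion becomes $k\Re f(\theta)=kf(0)-u^2/2+O(u^4/(kr))$ uniformly for $|u|\leq\rho\sqrt{\log k}$; expanding the exponential and integrating against $e^{-u^2/2}$ produces
$$\frac1{2\pi}\int_{-\theta_0}^{\theta_0}e^{k\Re f(\theta)}d\theta=\frac{e^{kf(0)}}{\sqrt{2\pi k\alpha}}\left(1+O\bigl(1/(kr)\bigr)\right),$$
which is the third equality of the statement (with $r^2 f''(0)=\alpha$ read in the $z$-parametrization $\tilde f(z)=g(z)-(\ell/k)\log z$). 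For the cosine factor, $k|\Im f(\theta)|=O(u^3/\sqrt{kr})$ on the window, so $\cos(k\Im f)=1+O(u^6/(kr))$ and the correction to the Gaussian is again $O(1/(kr))$; this yields the second equality. Combining the local computation with the tail bound finishes the proof.

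The only genuinely delicate step will be establishing the far-range gap with a constant independent of $\ell$: since $r$ can be as small as $\log^c(n)/n$ in the super-log regime, a naive "constant-sized" gap in $\Re g(re^{i\theta})-g(r)$ is unavailable, and one must really exploit the exact $r$-dependence coming out of the Riemann integral. The size-$r$ gap obtained above is precisely what is required to beat $r/(k\log k)$ in the target error, and any smaller gap would leave the far-range tail non-negligible. Everything else is routine saddle-point bookkeeping.
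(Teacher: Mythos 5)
Your proposal is correct in substance and, for the central window and for the second and third equalities, follows essentially the same saddle-point computation as the paper: the symmetry $f(-\theta)=\overline{f(\theta)}$ to replace $e^{kf}$ by $e^{k\Re f}\cos(k\Im f)$, the change of variables $u=\sqrt{k\alpha}\,\theta$ with $\alpha=rg'(r)+r^2g''(r)\asymp r$, and the error bookkeeping $O(kr\theta^4)+O\left((kr\theta^3)^2\right)=O\left(u^4/(kr)\right)+O\left(u^6/(kr)\right)$; your reading of $r^2f''(0)$ as $\alpha$ in the $z$-parametrization is also the intended one. The difference is in the tail. The paper does not split $[\theta_0,\pi]$ at all: it shows that $\Re f$ is decreasing in $|\theta|$ on essentially the whole interval, via the explicit formula $\partial_\theta\Re f(\theta)=\frac{n}{2\pi k}\log\frac{1+2r\cos(\theta+k\pi/n)+r^2}{1+2r\cos(\theta-k\pi/n)+r^2}+O(1/k^2)$, whose sign is governed by $\cos(\theta+k\pi/n)-\cos(\theta-k\pi/n)=-2\sin\theta\sin(k\pi/n)<0$; the entire tail is then bounded by $O\left(e^{k\Re f(\theta_0)}\right)=e^{kf(0)}k^{-c\rho^2}$, which beats $\frac{r}{k\log k}\alpha_{I_0}^{\ell}$ once $\rho$ is large. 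Your two-range decomposition buys nothing extra here, and its far-range step is the one place where your sketch falls short of a proof: the vanishing first derivative and the second derivative $\asymp -r$ of the Riemann-integral approximation at $\theta=0$ only give local quadratic decay near $0$, not the uniform gap $\Re g(re^{i\theta})\le g(r)-c_{\delta_0}r$ on $[\delta_0,\pi]$ --- a priori the profile could rise again at larger $|\theta|$. To close this you need exactly the global monotonicity statement above (or an equivalent direct study of $\theta\mapsto\int_{-k\pi/n}^{k\pi/n}\log\left|1+re^{i(\theta+\phi)}\right|d\phi$ on all of $[0,\pi]$), after which your absorption of $e^{kf(0)-c_{\delta_0}kr}$ using $kr\gtrsim(\log n)^5$ in the super-logarithmic regime is fine, as is your moderate-range Gaussian tail.
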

\begin{proof}
Remark first that $\Re f(\theta)$ decreases with $\vert \theta\vert$. Indeed, this is true in an interval of size $o(1)$ by Taylor expansion around $r$. Then, derivating $\Re f(\theta)$ with respect to $\theta$ and a comparison series/integral yields that 
$$\frac{\partial}{\partial \theta }\Re f(\theta)=\frac{n}{2\pi k}\log\left(\frac{1+2r\cos (\theta+\frac{\pi k}{n})+r^2}{1+2r\cos(\theta-\frac{\pi k}{n})+r^2}\right)+O\left(\frac{1}{k^2}\right),$$
so that $\frac{\partial}{\partial \theta }f(\theta)<0$ for $\theta>\frac{1}{k}$ and $\frac{\partial}{\partial \theta }\Re f(\theta)>0$ for $\theta<-\frac{1}{k}$ and $k$ large enough. We deduce that 
\begin{equation}\label{eq:real_approx_bound_far_away}
\left\vert\frac{1}{2\pi}\int_{[-\pi,\pi]\setminus[-\theta_0,\theta_0]}e^{kf(\theta)}d\theta\right\vert=O\left( e^{kf(\theta_0)}\right).
\end{equation}

Remark that $f(-\theta)=\overline{f(\theta)}$, so that 
$$\int_{-\theta_0}^{\theta_0}e^{kf(\theta)}d\theta=\int_{-\theta_0}^{\theta_0}\Re \left[e^{kf(\theta)}\right]d\theta=\int_{-\theta_0}^{\theta_0}e^{k\Re f(\theta)}\cos\left[k\Im f(\theta)\right]d\theta.$$
By Lemma \ref{lem:study_g},
$$\Re f(\theta)=f(0)-\frac{\alpha}{2}\theta^2+O(r\theta^4),$$
where $\alpha=\left(r g'(r)+r^2g''(r)\right)>0$, and
$$\Im f(\theta)=O\left(r\theta^3\right).$$
Hence,
\begin{align*}
k\Re f(\theta)+\log\cos\left[k\Im(f(\theta))\right]-f(0)&=-\frac{k}{2}\alpha\theta^2+O(kr\theta^4)+O((kr\theta^3)^2).
\end{align*}
In particular, suppose that $\theta_0=\gamma\sqrt{\log(k)/(k\alpha)}$. Since $\alpha/r\in [1/C,C]$ for some $C>0$ by Lemma \ref{lem:study_g}, by \eqref{eq:real_approx_bound_far_away} and the latter expansion,
$$\left\vert\frac{1}{2\pi}\int_{[-\pi,\pi]\setminus[-\theta_0,\theta_0]}e^{kg(re^{i\theta})}d\theta\right\vert=O\left( \frac{e^{kg_{\ell}(r)}}{\sqrt{kr}}\frac{r}{k\log k}\right),$$
which proves the first assertion.
Then, doing the change of variable $u=\sqrt{k\alpha}\theta$,
\begin{align*}
\int_{-\theta_0}^{\theta_0}e^{kf(\theta)}d\theta=&e^{kf(0)}\int_{-\theta_0}^{\theta_0}e^{-k\alpha\theta^2}d\theta+O\left(e^{kf(0)}\int_{-\theta_0}^{\theta_0}e^{-k\alpha\theta^2}kr\vert \theta\vert^4+(kr\theta^3)^2d\theta\right)\\
=&\frac{e^{kf(0)}}{\sqrt{k\alpha}}\left[\int_{-\sqrt{k\alpha}\theta_0}^{\sqrt{k\alpha}\theta_0}e^{-u^2/2}du+O\left(\frac{kr}{(k\alpha)^{2}}\int_{-\sqrt{k\alpha}\theta_0}^{\sqrt{k\alpha}\theta_0}(\vert u\vert^4+\vert u\vert^6)e^{-u^2/2}du\right)\right]\\
=&\frac{\sqrt{2\pi}e^{kf(0)}}{\sqrt{k\alpha}}\left(1+O\left(\frac{1}{kr}\right)\right),
\end{align*}
where we used again that $\alpha\sim r$. The same reasoning with only $\Re f(\theta)$ instead of $f(\theta)$ yields the same asymptotic expansion, and the second assertion is deduced.
\end{proof}
The following estimate will be useful for small perturbation of $f$.
\begin{lemma}\label{lem:integral_small_pertub_f}
Let $\frac{c}{n}<\epsilon<\frac{C}{\log n}$. Assume that $h=f+\frac{1}{k}\delta$ with $\Vert\delta\Vert_{\infty}\leq O\left(\epsilon\log k\right)$  and $\Vert\delta\Vert_{\infty}^2\leq O\left(\frac{\epsilon}{\log k }\right)$ and such that $\Re \delta(\theta)=-\epsilon\left(1+O\left(\frac{1}{\log n}\right)\right)$ on $[\frac{C}{\log n},\frac{C}{\log n}]$. Then, as long as $l\geq \log n^7$,
$$\frac{1}{2\pi}\int_{-\pi}^{\pi}e^{kh(\theta)}d\theta= e^{-\epsilon\left(1+O\left(\frac{1}{\log k}\right)\right)+iO(\Vert \delta\Vert_{\infty})}\alpha^{\ell} _{I_0},$$
with $O(\cdot)$ only depending on $C$.
\end{lemma}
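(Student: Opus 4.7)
Writing $kh = kf + \delta$ so $e^{kh} = e^{kf}e^{\delta}$, I would begin by splitting the integral as $\int_{-\theta_0}^{\theta_0} + \int_{|\theta|>\theta_0}$ with $\theta_0 = \rho\sqrt{\log k/(kr)}$ for $\rho$ a large constant. Since $\|\delta\|_\infty = O(\epsilon\log k) = O(\log k/\log n) = O(1)$ under the standing assumption $\epsilon \leq C/\log n$, the tail is bounded by $e^{\|\delta\|_\infty}\int_{|\theta|>\theta_0}e^{k\Re f}d\theta$. Refining the argument of Lemma~\ref{lem:first_bound_integral} with $\rho$ large enough makes this $O(k^{-2})\alpha^{\ell}_{I_0}$, which is negligible compared to the target accuracy $(\epsilon/\log k)\alpha^{\ell}_{I_0}$, itself of order at least $\alpha^{\ell}_{I_0}/(n\log n)$.

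The assumption $\ell \geq (\log n)^7$ is used next to ensure that $[-\theta_0, \theta_0]$ sits inside $[-C/\log n, C/\log n]$, so that the hypothesis on $\Re\delta$ applies. Indeed, for $\ell \leq k/2$, Lemma~\ref{lem:stationary_point} gives $r \asymp \ell/n$, so $kr \asymp \ell \geq (\log n)^7$ and $\theta_0 = O((\log n)^{-3})$. On this central interval I would decompose $\delta(\theta) = -\epsilon + \eta(\theta)$, so that $\Re\eta(\theta) = O(\epsilon/\log n)$ pointwise and $|\Im\eta(\theta)| \leq \|\delta\|_\infty$. The two bounds on $\|\delta\|_\infty$ combine to give $|\eta(\theta)|^2 = O((\epsilon/\log n)^2 + \|\delta\|_\infty^2) = O(\epsilon/\log k)$ uniformly, hence a Taylor expansion yields $e^{\eta(\theta)} = 1 + \eta(\theta) + O(\epsilon/\log k)$.

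The crux is to absorb the linear term $\int e^{kf}\eta\, d\theta$ into a phase factor of size $O(\|\delta\|_\infty)$. I would set
\[
\phi_0 = \frac{\int_{-\theta_0}^{\theta_0}e^{kf(\theta)}\,\Im\delta(\theta)\,d\theta}{\int_{-\theta_0}^{\theta_0}e^{kf(\theta)}\,d\theta},
\]
observing that the denominator equals $\alpha^{\ell}_{I_0}(1+O(r/(k\log k)))$ (positive real by the symmetry $f(-\theta) = \overline{f(\theta)}$), and that $\int |e^{kf}|d\theta$ has the same order as $\int e^{k\Re f}d\theta$ by Lemma~\ref{lem:first_bound_integral}, so that $|\phi_0| \leq \|\delta\|_\infty(1+O(r/(k\log k))) = O(\|\delta\|_\infty)$. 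Inserting this and bounding $\int e^{kf}\Re\eta\, d\theta$ trivially by $\|\Re\eta\|_\infty \int |e^{kf}|d\theta = O(\epsilon/\log n)\alpha^{\ell}_{I_0}$, one obtains
\[
\int_{-\theta_0}^{\theta_0}e^{kf}e^{\eta}\,d\theta = (1 + i\phi_0)\alpha^{\ell}_{I_0} + O(\epsilon/\log k)\alpha^{\ell}_{I_0}.
\]
Since $|\phi_0|^2 = O(\|\delta\|_\infty^2) = O(\epsilon/\log k)$, the factor $1 + i\phi_0$ may be rewritten as $e^{i\phi_0}(1 + O(\epsilon/\log k))$; multiplying back by $e^{-\epsilon}$ and converting the multiplicative error $1+O(\epsilon/\log k)$ into $e^{-\epsilon\cdot O(1/\log k)}$ yields the claim.

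The main obstacle is that the linear term in $\Im\delta$ is only bounded uniformly by $\|\delta\|_\infty$, which is larger than the target error $\epsilon/\log k$; this forces the introduction of the weighted average $\phi_0$ and the extraction of a genuine phase $e^{iO(\|\delta\|_\infty)}$ in the conclusion rather than of a pure multiplicative error. Verifying that this weighted average has the right size $O(\|\delta\|_\infty)$ relies on the fact (itself a byproduct of the Gaussian analysis in Lemma~\ref{lem:first_bound_integral}) that $\int|e^{kf}|$ and $\int e^{kf}$ are of the same order.
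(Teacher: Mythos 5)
Your overall strategy --- restricting to the Gaussian window $[-\theta_0,\theta_0]$ with $\theta_0=\rho\sqrt{\log k/(kr)}$, discarding the tails via Lemma \ref{lem:first_bound_integral}, and factoring out the constant real part $-\epsilon$ of the perturbation --- is the same as the paper's; the paper simply computes the real and imaginary parts of the central integral directly, whereas you expand $e^{\eta}$ to first order and package the linear term in $\Im\delta$ into the weighted average $\phi_0$.

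There is, however, a genuine gap at your crux step. The quantity $\phi_0=\int e^{kf}\,\Im\delta\,d\theta\big/\int e^{kf}\,d\theta$ is in general \emph{complex}: the denominator is real by the symmetry $f(-\theta)=\overline{f(\theta)}$, but the numerator is not, since $\Im\delta$ has no symmetry in $\theta$ and $e^{kf(\theta)}=e^{k\Re f(\theta)}\bigl(\cos(k\Im f(\theta))+i\sin(k\Im f(\theta))\bigr)$. Hence $e^{i\phi_0}$ is not a pure phase: $|e^{i\phi_0}|=e^{-\Im\phi_0}$, and your bounds only give $|\Im\phi_0|=O(\Vert\delta\Vert_{\infty})=O(\epsilon\log k)$, which is far larger than the admissible relative error $\epsilon/\log k$ in the modulus. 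Read with the precision the lemma is used for downstream (Lemma \ref{lem:Case_J3}, Proposition \ref{prop:case_J2}, and the bounds on $S_3,S_4$ all require $\bigl|\tfrac{1}{2\pi}\int e^{kh}\bigr|=e^{-\epsilon(1+O(1/\log k))}\alpha^{\ell}_{I_0}$ with a \emph{real} phase of size $O(\Vert\delta\Vert_{\infty})$), your argument as written does not deliver the conclusion. The missing ingredient is to show $\Im\phi_0=O(\epsilon/\log k)$, i.e.\ to bound the real contribution $\int e^{k\Re f}\sin(k\Im f)\,\Im\delta\,d\theta$: on the window one has $|k\Im f|=O(kr\theta_0^3)=O\bigl((\log k)^{3/2}/\sqrt{kr}\bigr)=O\bigl((\log n)^{-2}\bigr)$ once $kr\gtrsim\ell\geq(\log n)^7$, so this term is $O\bigl(\Vert\delta\Vert_{\infty}(\log n)^{-2}\bigr)\alpha^{\ell}_{I_0}=O(\epsilon/\log k)\,\alpha^{\ell}_{I_0}$. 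This is exactly the step where the paper invokes $\sin(k\Im f)=O(1/\log n)\cos(k\Im f)$, and it is where the full strength of $\ell\geq(\log n)^{7}$ enters (you use the hypothesis only to get $\theta_0\leq C/\log n$, for which a smaller power would already suffice). With this one additional estimate your proof closes and becomes essentially equivalent to the paper's.
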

\begin{proof}
Set $\theta_0=\rho\sqrt{\frac{\log k}{rk}}$ with $\rho$ large enough. First, for $\vert \theta\vert\geq \theta_0$, by Lemma \ref{lem:study_g},
$$k\Re h(\theta)\leq kf(0)-C\rho^2\log n+O\left(\frac{1}{\log n}\right),$$
so that for $\rho$ large enough and by Lemma \ref{lem:first_bound_integral},
\begin{equation}\label{eq:J_3_negligible_theta}
\left\vert\int_{-\pi}^{-\theta_0}e^{kh(\theta)}d\theta\right\vert+\left\vert\int_{\theta_0}^{\pi}e^{kh(\theta)}d\theta\right\vert\leq \alpha^{\ell} _{I_0}O\left(\frac{1}{n^2}\right)\leq \alpha^{\ell} _{I_0}\exp\left(-\epsilon\right)O\left(\frac{\epsilon}{\log k}\right)
\end{equation}
For $\vert \theta\vert\leq \theta_0$, recall from Lemma \ref{lem:study_f} that $\Im f(\theta)=O\left(r\theta^3\right)$ and using that, for $\theta=O\left(\sqrt{\frac{\log k}{kr}}\right)$, $ O\left(kr\theta^3\right)=O\left(\frac{\log n^{3/2}}{\sqrt{kr}}\right)=o(1)$,
\begin{align*}
\Re\left(\frac{1}{2\pi}\int_{-\theta_0}^{\theta_0}e^{kh(\theta)}d\theta\right)=&\frac{1}{2\pi}\int_{-\theta_0}^{\theta_0}e^{k\Re h(\theta)}\cos\left(\Im \left[kh(\theta)\right]\right)d\theta\\
=&\frac{1}{2\pi}\int_{-\theta_0}^{\theta_0}e^{k\Re(f(\theta)-\frac{\epsilon}{k}\left(1+o\left(\frac{1}{\log k}\right)\right)}\cos\left(k\Im f(\theta)+O\left(\Vert\delta\Vert_{\infty}\right)\right)d\theta\\
=&\frac{1}{2\pi}\int_{-\theta_0}^{\theta_0}e^{k\Re f(\theta)-\epsilon\left(1+o\left(\frac{1}{\log k}\right)\right)}\cos\left(k\Im f(\theta)\right)\\
&\hspace{5cm}\left(1+O\left(\Vert\delta\Vert_{\infty}\max\left(\frac{\log n^{3/2}}{\sqrt{kr}},\Vert \delta\Vert_{\infty}\right)\right)\right)d\theta\\
=&\exp\left(-\epsilon\right)\left(1+\epsilon O\left(\frac{1}{\log k}\right)\right)\alpha^{\ell} _{I_0},
\end{align*}
for $\sqrt{kr}\geq \log n^{7/2}$, where we used on the second equality that $\cos(\Im f(\theta))>c$ for some $c>0$ and for $\theta=O\left(\sqrt{\frac{\log k}{kr}}\right)$. Likewise,
\begin{align*}
\Im\left(\frac{1}{2\pi}\int_{-\theta_0}^{\theta_0}e^{kh(\theta)}d\theta\right)=&\frac{1}{2\pi}\int_{-\theta_0}^{\theta_0}e^{k\Re f(\theta)-\epsilon\left(1+o\left(\frac{1}{\log k}\right)\right)}\sin\left(k\Im f(\theta)+O\left(\Vert\delta\Vert_{\infty}\right)\right)d\theta\\
=&\frac{1}{2\pi}\int_{-\theta_0}^{\theta_0}e^{k\Re f(\theta)-\epsilon\left(1+o\left(\frac{1}{\log k}\right)\right)}\Bigg[\cos\left(k\Im f(\theta)\right)O\left(\Vert\delta\Vert_{\infty}\right)\\
&\hspace{3cm}+\sin(k\Im f(\theta))\left(1+O\left(\Vert\delta\Vert_{\infty}^2\right)\right)\Bigg]d\theta
\end{align*}
First, 
\begin{align*}
\frac{1}{2\pi}\int_{-\theta_0}^{\theta_0}&e^{k\Re f(\theta)-\epsilon\left(1+o\left(\frac{1}{\log k}\right)\right)}\left[\cos\left(k\Im f(\theta)\right)O\left(\Vert\delta\Vert_{\infty}\right)+\sin(k\Im f(\theta))O\left(\Vert \delta\Vert_{\infty}^2\right)\right]d\theta\\
&\hspace{2cm}=\exp\left(-\epsilon\left(1+O\left(\frac{1}{\log k}\right)\right)\right)\alpha^{\ell} _{I_0}O\left(\Vert \delta\Vert_{\infty}\right).
\end{align*}
Then, using that of $f(-\theta)=\bar{f(\theta)}$,
\begin{align*}
\frac{1}{2\pi}\int_{-\theta_0}^{\theta_0}e^{k\Re f(\theta)-\epsilon\left(1+o\left(\frac{1}{\log k}\right)\right)}\sin(k\Im f(\theta))d\theta=&\frac{1}{2\pi}\int_{-\theta_0}^{\theta_0}e^{k\Re f(\theta)}\sin(k\Im f(\theta))d\theta\\
+&\frac{1}{2\pi}\int_{-\theta_0}^{\theta_0}e^{k\Re f(\theta)}O\left(\epsilon\right)\sin(k\Im f(\theta))d\theta\\
=&0+O\left(\frac{\epsilon}{\log k}\right)\frac{1}{2\pi}\int_{-\theta_0}^{\theta_0}e^{k\Re f(\theta)}\cos(k\Im f(\theta))d\theta\\
=&\alpha_{I_0}^{\ell} O\left(\Vert\delta\Vert_{\infty}\right),
\end{align*}
where we used on the second equality that $\sin(k\Im f(\theta))=O(\frac{1}{\log n})=O\left(\frac{1}{\log n}\right)\cos(k\Im f(\theta))$ for $\vert \theta\vert\leq \theta_0$ and $l\geq \log n ^5$ and on the last equality that $\epsilon=O\left(\Vert \delta\Vert_{\infty}\right)$. Hence, 
\begin{align*}
\Im\left(\frac{1}{2\pi}\int_{-\theta_0}^{\theta_0}e^{kh(\theta)}d\theta\right)=&\alpha_{I_0}^{\ell} \exp\left(-\epsilon\left(1+O(\frac{1}{\log k}\right)\right)O\left(\Vert\delta\Vert_{\infty}\right).
\end{align*}

Finally, this yields together with \eqref{eq:J_3_negligible_theta}
\begin{align*}
\frac{1}{2\pi}\int_{-\pi}^{\pi}e^{kh(\theta)}d\theta
&=\exp\left(-\epsilon\left(1+O\left(\frac{1}{\log k}\right)\right)+iO(\Vert \delta\Vert_{\infty}\right)\alpha^{\ell} _{I_0},
\end{align*}
where we used that $\Vert\delta\Vert_{\infty}^2=O\left(\frac{\epsilon}{\log n}\right)$.
\end{proof}

\subsubsection*{Case $\mathcal{J}_1$ :}

For $J\in B_{k,n}$, set $\theta_J=argmax_{[-\pi,\pi]} \Re g_J(re^{i\theta})$. Recall the action $(t,J)\mapsto t\cdot J$ from Section \ref{subsec:summary_bounds_alpha}. Since $g_{1\cdot J}(z)=g_{J}(ze^{2i\pi\theta/n})+i\frac{l}{k}\theta \frac{2\pi}{n}$, there exists $t\in [0,n[$ such that 
$\theta_{t\cdot J}\in [-\pi/(2n),\pi/(2n)]$.

\begin{lemma}\label{lem:Case_J1}
There exists $\kappa>0$ such that for any $\omega\in \mathcal{J}_1$,
$$\vert \lambda_{\omega}^{\ell}\vert=o\left(e^{-\kappa C_1\frac{rk}{\log k}}\right)$$
when $\ell\geq c\log k^2$ for some $c$ only depending on $C_1$.
\end{lemma}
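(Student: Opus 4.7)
The strategy is to bound $|\alpha_J^\ell|$ by steepest descent and compare with $\alpha_{I_0}^\ell$. I start from
$$|\alpha_J^\ell| \leq \frac{1}{2\pi r^{\ell}}\int_{-\pi}^\pi e^{k\Re g_J(re^{i\theta})}\, d\theta,$$
where $r$ is the saddle point from Lemma~\ref{lem:stationary_point} (it depends only on $\ell$ and $k$, hence is the same for $J$ and $I_0$). I first choose a representative $J$ of $\omega$ with $\theta_J \in [-\pi/(2n),\pi/(2n)]$, as justified just before the lemma statement, so that the argmax of $\Re g_J(re^{i\cdot})$ is essentially aligned with the unique maximum of $\Re g_{I_0}(re^{i\cdot})$ at $\theta = 0$.

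The core of the argument is the pointwise estimate
\begin{equation}\label{eq:keycomparison-plan}
\Re g_J(re^{i\theta}) - \Re g_{I_0}(re^{i\theta}) \leq -\frac{\kappa_0 C_1 r}{\log k}
\end{equation}
valid uniformly for $|\theta|\leq \theta_0 = \rho\sqrt{\log k/(kr)}$. To establish this I use the optimal transport map $T^J\colon \widehat{I_0}\to \widehat{J}$ provided by Lemma~\ref{lem:increasing_transport_map}, which satisfies $|T^J(x)|\geq |x|$, and write
$$\Re g_J(re^{i\theta}) - \Re g_{I_0}(re^{i\theta}) = \frac{1}{k}\sum_{x\in \widehat{I_0}}\bigl[\phi_\theta(T^J(x))-\phi_\theta(x)\bigr],\qquad \phi_\theta(u)=\log|1+re^{i(\theta+u)}|.$$
Using $-\phi_\theta'(u)=r\sin(\theta+u)/|1+re^{i(\theta+u)}|^2$, Assumption~\ref{ass:number-particles} and the fact that the relevant $x$ in the $\mathcal{J}_1$-condition satisfy $|T^J(x)|>k\pi/n + C_2/\log n$, the interval $[|x|,|T^J(x)|]$ contains a subinterval of length comparable to $|T^J(x)|-|x|$ on which $|\phi_\theta'|\geq c r$, for some $c=c(\eta)>0$. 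This yields $\phi_\theta(T^J(x))-\phi_\theta(x)\leq -c_0 r(|T^J(x)|-|x|)$. All remaining terms are non-positive by $|T^J(x)|\geq |x|$ and monotonicity of $\phi_\theta$ on $\mathbb{R}^\pm$, so summing and invoking the $\mathcal{J}_1$-condition $\sum_{x}(|T^J(x)|-|x|)\geq C_1 k/\log k$ gives \eqref{eq:keycomparison-plan}.

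To conclude, I split the integral at $|\theta|=\theta_0$. On $|\theta|\leq \theta_0$, estimate \eqref{eq:keycomparison-plan} combined with Lemma~\ref{lem:first_bound_integral} gives
$$\frac{1}{2\pi r^\ell}\int_{|\theta|\leq \theta_0} e^{k\Re g_J(re^{i\theta})}\,d\theta \leq e^{-\kappa_0 C_1 rk/\log k}(1+o(1))\alpha_{I_0}^{\ell}.$$
For the tail $|\theta|>\theta_0$, I use $\Re g_J(re^{i\theta})\leq \Re g_J(re^{i\theta_J})$ together with \eqref{eq:keycomparison-plan} at $\theta=\theta_J$, yielding a bound of $O(\sqrt{kr^2}\,e^{-\kappa_0 C_1 rk/\log k})\alpha_{I_0}^{\ell}$ after dividing by $2\pi r^\ell$ and applying Lemma~\ref{lem:first_bound_integral} to $e^{k\Re g_{I_0}(r)}$; the polynomial prefactor is absorbed by slightly reducing $\kappa_0$ to $\kappa$, which is valid precisely when $rk/\log k \gg \log k$, i.e., $\ell\geq c(\log k)^2$ with $c$ large enough (depending on $C_1$). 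This gives $|\lambda^\ell_\omega|=|\alpha_J^\ell|/\alpha_{I_0}^\ell = O(e^{-\kappa C_1 rk/\log k})$.

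The main obstacle lies in step two: ensuring that $|\phi_\theta'|\geq cr$ on a subinterval of length comparable to $|T^J(x)|-|x|$. The derivative vanishes at $\theta+u\in\{0,\pm\pi\}$, so one must check that the range $[|x|,|T^J(x)|]$ together with the small shift $\theta$ avoids these critical points for a proportional fraction of the interval. This is where the $C_2/\log n$ offset and Assumption~\ref{ass:number-particles} (bounding $k\pi/n$ away from $0$ and $\pi$) enter, and where $C_2$ may need to be chosen large relative to constants depending on $\eta$. A secondary difficulty is that \eqref{eq:keycomparison-plan} must hold pointwise in $\theta$ (not merely in average), which forces the choice of the well-aligned representative $J$ with $\theta_J$ close to $0$.
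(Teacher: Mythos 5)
Your overall architecture is the same as the paper's: pick the representative of $\omega$ with $\theta_J\in[-\pi/(2n),\pi/(2n)]$, compare $\Re g_J$ with $\Re g_{I_0}$ through the transport map of Lemma~\ref{lem:increasing_transport_map}, extract a gain of order $C_1 r/\log k$ from the $\mathcal{J}_1$-condition (your derivative estimate on a good subinterval is essentially the paper's concavity-of-cosine step, and it is correct), and conclude with Lemma~\ref{lem:first_bound_integral}, absorbing a polynomial factor in $kr$ thanks to $\ell\geq c(\log k)^2$. However, there is a genuine gap in your key estimate: you require the comparison $\Re g_J(re^{i\theta})-\Re g_{I_0}(re^{i\theta})\leq -\kappa_0C_1r/\log k$ \emph{uniformly} for $|\theta|\leq\theta_0\asymp\sqrt{\log k/(kr)}$, and you justify the non-indicator terms by saying they are ``non-positive by $|T^J(x)|\geq|x|$ and monotonicity of $\phi_\theta$ on $\mathbb{R}^\pm$''. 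That assertion is only true at $\theta=0$, where $\phi_0(u)$ depends on $|u|$ alone. For $\theta\neq 0$ the function $\phi_\theta$ is not even, and an atom transported across the origin with $|T^J(x)|$ close to $|x|$ (or an image pushed past $\pm\pi$) produces a \emph{positive} contribution of size up to order $r|\theta|$. Since $\theta_0$ is much larger than $1/n$ and the number of such atoms can be of order $k/\sqrt{\log k}$ or more, in the borderline regime $\ell\asymp(\log k)^2$ the total spurious positive contribution is of the same order as (or larger than) the guaranteed gain $C_1 r/\log k$, and dominating it would require a counting argument (involving $\rho$, $C_1$ and the structure of the optimal map) that you do not give. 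You flag the critical points of $\phi_\theta'$ as the main obstacle, but only in connection with the indicator terms, which is the harmless part; the untreated issue is the sign of the remaining terms for $\theta\neq0$.

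The good news is that the uniform-in-$\theta$ claim is not needed, and dropping it removes the gap: as in the paper (and as in your own treatment of the tail $|\theta|>\theta_0$), it suffices to bound the whole integral by $2\pi\exp\bigl(k\sup_\theta\Re g_J(re^{i\theta})\bigr)r^{-\ell}$, note that the supremum is attained at $\theta_J$ with $|\theta_J|\leq\pi/(2n)$, and perform the transport-map comparison at that single point, where the misalignment only costs $O(r/n)$ in total and is negligible against $C_1r/\log k$. Comparing with the steepest-descent value of $\alpha_{I_0}^{\ell}$ from Lemma~\ref{lem:first_bound_integral} then loses only a factor of order $\sqrt{kr}$, which is absorbed exactly as in your last step when $\ell\geq c(\log k)^2$ with $c$ large depending on $C_1$. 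In other words, your bulk/tail splitting and the pointwise bulk estimate buy nothing over the crude sup bound here, and they are precisely where the unproved step sits.
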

\begin{proof}
Let $\omega\in\mathcal{J}_1$ and choose $J\in \omega$ such that $\theta_J\in[-\pi/(2n),\pi/(2n)]$. Then,
$$\vert \lambda_{J}^{\ell}\vert=\frac{\vert\alpha^{\ell} _J\vert}{\vert \alpha^{\ell} _{I_0}\vert}.$$
By definition of $\theta_J$, for all $\theta\in [-\pi,\pi]$,
$$\Re g_{J}(re^{i\theta})\leq \Re g_{J}(re^{i\theta_J})\leq g(r)+\Re g(re^{i\theta_J})-g(r)+\Re g_{J}(re^{i\theta_J})-\Re g(re^{i\theta_J}).$$
First, since $\theta_J\in [-\pi/(2n),\pi/(2n)]$, $\Re g(re^{i\theta_J})-g(r)\leq 0$ by Taylor expansion at $r$, see Lemma \ref{lem:study_g}. Then, 
\begin{align*}
\Re g_{J}(re^{i\theta_J})-\Re g(re^{i\theta_J})\leq \frac{1}{k}\sum_{i=1}^k\Re\log(1+r&e^{i\theta_J+ 2i\pi(J(i)-(k+1)/2)/n})\\
&\hspace{1cm}-\frac{1}{k}\sum_{i=1}^k\Re\log(1+re^{i\theta_J+2i\pi(I_0(i)-(k+1)/2)/n}).
\end{align*}
Remark that 
$$\Re\log(1+re^{i\theta})=\log(\vert 1+re^{i\theta}\vert)=\frac{1}{2}\log(1+r^2+2r\cos\theta)$$
which is smooth on $]-\pi,\pi[$.  Since $\theta_J\in[-\pi/(2n),\pi(2n)]$,
\begin{align*}
\Re g_{J}(re^{i\theta_J})-\Re g(re^{i\theta_J})=& \frac{1}{2k}\sum_{i=1}^k\log(1+r^2+2r\cos(2\pi(J(i)-(k+1)/2)/n))\\
&\hspace{2cm}-\frac{1}{2k}\sum_{i=1}^k\log(1+r^2+2r\cos(2\pi(I_0(i)-(k+1)/2)/n)+O(\frac{1}{n})\\
=& \frac{1}{2k}\int_{\mathbb{R}}\log(1+r^2+2r \cos(\vert\theta\vert))d\mu_J(\theta)\\
&\hspace{2cm}-\frac{1}{2k}\int_{\mathbb{R}}\log(1+r^2+2r \cos(\vert\theta\vert)d\mu_{I_0}(\theta)+O(\frac{1}{n}).
\end{align*}
Let $T$ be an optimal transport map from $\mu_{I_0}$ to $\mu_J$ with $\vert T(x)\vert\geq \vert x\vert$ for all $x\in \widehat{I_0}$, the existence of which is given by Lemma \ref{lem:increasing_transport_map}. Then, since $\log(1+r^2+2r \cos(x))$ is decreasing on $[0,\pi]$,
\begin{align*}
\frac{1}{2k}&\int_{\mathbb{R}}\log(1+r^2+2r \cos(\vert\theta\vert))d\mu_J(\theta)-\frac{1}{2k}\int_{\mathbb{R}}\log(1+r^2+2r \cos(\vert\theta\vert))d\mu_{I_0}(\theta)\\
=&\frac{1}{2k}\int_{\mathbb{R}}\log(1+r^2+2r \cos(\vert T(\theta)\vert))-\log(1+r^2+2r \cos\theta)d\mu_{I_0}(\theta)\\
\leq &\frac{1}{2k}\int_{\mathbb{R}}\mathbf{1}_{\vert T(\theta)\vert>k\pi/n+\frac{C_2}{\log n}}\left(\log(1+r^2+2r \cos(\vert T(\theta)\vert))-\log(1+r^2+2r \cos\theta)\right)d\mu_{I_0}(\theta).
\end{align*}
By concavity of $\cos$ on there is a constant $c$ only depending on $k/n$ such that for $y\geq k\pi/n+\frac{C_2}{\log n}$ and $x\leq k\pi/n$,
$$\log(1+r^2+2r \cos(y))-\log(1+r^2+2r x)d\mu_{I_0}\cos(x)\leq c(x-y).$$
Hence, by hypothesis $\omega\in\mathcal{J}_1$,
\begin{align*}
&\frac{1}{2k}\int_{\mathbb{R}}\mathbf{1}_{\vert T(\theta)\vert>k\pi/n+\frac{C_2}{\log n}}\left(\log(1+r^2+2r \cos(\vert T(\theta)\vert))-\log(1+r^2+2r \cos\theta)\right)d\mu_{I_0}(\theta)\\
\leq& \frac{1}{2k}\int_{\mathbb{R}}\mathbf{1}_{\vert T(x)\vert>k\pi/n+\frac{C_2}{\log n}}cr(\vert \theta\vert-\vert T(\theta)\vert)d\mu_{I_0}(\theta)\\
\leq &-crC_1\frac{1}{\log k}.
\end{align*}
Hence, by Lemma \ref{lem:first_bound_integral},
$$ \frac{\vert\alpha_{\ell}^J\vert}{\alpha_{\ell}^{I_0}}\leq e^{-cC_1\frac{kr}{\log k}+\frac{1}{2}\log (kr)}\leq e^{-\kappa C_1\frac{kr}{\log k}},$$ 
provided $r\geq c\frac{\log k^2}{k}$ for $c$ large enough. Since $r$ is of order $\frac{\ell}{k}$ up to an absolute numeric, the result is deduced.
\end{proof}
\subsubsection*{Case $\mathcal{J}_3$}
Before tackling the case $\mathcal{J}_3$, let us first relate $r$ to $\gamma_{\ell}$.
\begin{lemma}\label{lem:relation_r_gammal}
For $1\leq \ell\leq k/2$
$$\gamma_{\ell}=\frac{2\pi}{n}\Im\left(\frac{re^{-i\frac{k\pi}{n}}}{1+re^{-i\frac{k\pi}{n}}}\right)\left(1+O\left(\frac{1}{n}\right)\right).$$
\end{lemma}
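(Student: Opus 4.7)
The plan is to compute $\Im\bigl(re^{-ik\pi/n}/(1+re^{-ik\pi/n})\bigr)$ in closed form using the explicit expression $r=\sin(\ell\pi/n)/\sin((k-\ell)\pi/n)+O(1/k^2)$ from Lemma~\ref{lem:stationary_point}, and then match it with the asymptotic formula~\eqref{eq:expression_gammal} for $\gamma_{\ell}$.

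First, rationalise the fraction by multiplying numerator and denominator by $1+re^{ik\pi/n}$:
\[
\frac{re^{-ik\pi/n}}{1+re^{-ik\pi/n}}=\frac{r^{2}+re^{-ik\pi/n}}{1+r^{2}+2r\cos(k\pi/n)},
\]
so that, up to a sign convention that ultimately matches $\gamma_{\ell}>0$, the relevant quantity is
\[
\left|\Im\!\left(\frac{re^{-ik\pi/n}}{1+re^{-ik\pi/n}}\right)\right|=\frac{r\sin(k\pi/n)}{1+r^{2}+2r\cos(k\pi/n)}.
\]

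Next, I would substitute the value of $r$ from Lemma~\ref{lem:stationary_point} and simplify both numerator and denominator using sine addition formulas. The denominator factors cleanly: writing $|1+re^{ik\pi/n}|^{2}=(1+r\cos(k\pi/n))^{2}+r^{2}\sin^{2}(k\pi/n)$, the identity $\sin((k-\ell)\pi/n)=\sin(k\pi/n)\cos(\ell\pi/n)-\cos(k\pi/n)\sin(\ell\pi/n)$ gives
\[
1+r\cos(k\pi/n)=\frac{\sin(k\pi/n)\cos(\ell\pi/n)}{\sin((k-\ell)\pi/n)}+O\!\left(\frac{1}{k^{2}}\right),
\]
and combining with $r\sin(k\pi/n)=\sin(\ell\pi/n)\sin(k\pi/n)/\sin((k-\ell)\pi/n)+O(1/k^{2})$ yields by the Pythagorean identity
\[
|1+re^{ik\pi/n}|^{2}=\frac{\sin^{2}(k\pi/n)}{\sin^{2}((k-\ell)\pi/n)}\left(1+O\!\left(\frac{1}{k^{2}}\right)\right).
\]
Dividing, the $\sin(k\pi/n)$ factors almost cancel and one is left with
\[
\frac{r\sin(k\pi/n)}{|1+re^{ik\pi/n}|^{2}}=\frac{\sin(\ell\pi/n)\sin((k-\ell)\pi/n)}{\sin(k\pi/n)}\left(1+O\!\left(\frac{1}{k^{2}}\right)\right).
\]

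Finally, multiplying by $2\pi/n$ and comparing with~\eqref{eq:expression_gammal}, the relation $2\sin(\pi/n)=(2\pi/n)(1+O(1/n^{2}))$ identifies the two expressions, and the error term $O(\ell/n^{3})$ in~\eqref{eq:expression_gammal} is absorbed into the $(1+O(1/n))$ factor since $\gamma_{\ell}\asymp \ell(k-\ell)/n^{3}\asymp \ell/n^{2}$ for $1\leq\ell\leq k/2$. There is no real obstacle here: the argument is a direct trigonometric computation, and the only bookkeeping subtlety is tracking the orientation/sign so that the imaginary part matches the positive quantity $\gamma_{\ell}$ (equivalently, one works with the absolute value or with the natural choice of conjugate contour giving a positive imaginary part).
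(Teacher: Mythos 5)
Your computation is correct and follows essentially the same route as the paper: rationalise the imaginary part to $\tfrac{r\sin(k\pi/n)}{1+r^2+2r\cos(k\pi/n)}$, feed in the stationary-point information (the paper uses the unsolved relation~\eqref{eq:first_rel_r_gamma} and inverts, you substitute the closed form of $r$ and use the sine addition and Pythagorean identities), and then match with~\eqref{eq:expression_gammal} after absorbing the $O(\ell/n^3)$ error using $\gamma_\ell\asymp \ell/n^2$. One small remark: since the additive $O(1/k^2)$ error in $r$ is compared to $r\asymp\ell/k$, the relative error in your displayed quotient is really $O(1/(k\ell))$ rather than $O(1/k^2)$ for small $\ell$, but this is harmless because the lemma only claims a $\left(1+O\left(\frac{1}{n}\right)\right)$ factor.
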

\begin{proof}
First, by  \eqref{eq:expression_gammal},
$$\gamma_{\ell}=2\frac{\pi}{n}\frac{\sin\left(\frac{\ell \pi}{n}\right)\sin\left(\frac{(k-\ell)\pi}{n}\right)}{\sin\left(\frac{k\pi}{n}\right)}\left(1+O\left(\frac{1}{n}\right)\right).$$
Then, by Lemma \ref{lem:stationary_point} and the fact that $\Im \left(\frac{re^{-i\theta}}{1+re^{-i\theta}}\right)= \frac{r\sin(\theta)}{1+r^2+2r\cos(\theta)}$ for $\theta\in \mathbb{R}$ and \eqref{eq:first_rel_r_gamma}
\begin{align*}
\Im \left(\frac{re^{-i\frac{k\pi}{n}}}{1+re^{-i\frac{k\pi}{n}}}\right)^{-1}=&\frac{r^2\sin\left(\frac{k\pi}{n}\right)^2+\left(1+r\cos\left(\frac{k\pi}{n}\right)\right)^2}{r\sin\left(\frac{k\pi}{n}\right)}\left(1+O\left(\frac{1}{n}\right)\right)\\
=&\frac{r^2\sin\left(\frac{k\pi}{n}\right)^2+\frac{r^2\sin\left(\frac{k\pi}{n}\right)^2}{\tan\left(\frac{\ell\pi}{n}\right)^2}}{r\sin\left(\frac{k\pi}{n}\right)}\left(1+O\left(\frac{1}{n}\right)\right)\\
=&r\sin\left(\frac{k\pi}{n}\right)\left(1+\cot\left(\frac{\ell\pi}{n}\right)^2\right)\left(1+O\left(\frac{1}{n}\right)\right)\\
=&\frac{\sin\left(\frac{k\pi}{n}\right)}{\sin\left(\frac{\ell \pi}{n}\right)\sin\left(\frac{(k-\ell)\pi}{n}\right)}\left(1+O\left(\frac{1}{n}\right)\right),
\end{align*}
so that
$$\gamma_{\ell}=\frac{2\pi}{n}\Im\left(\frac{re^{-i\frac{k\pi}{n}}}{1+re^{-i\frac{k\pi}{n}}}\right)\left(1+O\left(\frac{1}{n}\right)\right).$$
\end{proof}
If $\omega\in\mathcal{J}_3$, recall from Section \ref{subsec:summary_bounds_alpha} that there exists $J_{\omega}\in \omega$ such that there exist a pair of partitions $\tau^{\omega}=(\mu^{\omega},\mu^{\omega})$ defined by 
$$\mu^{\omega}=\frac{2\pi}{n}Sort\left(T^{t\cdot J_\omega}(x)-x, x\in \widehat{I_0}\cap \mathbb{R}^+\right),\,\nu^{\omega}=Sort\left(x-T^{t\cdot J_\omega}(x),x\in \widehat{I_0}\cap \mathbb{R}^-\right),$$
with $\max(\mu^{\omega}_1,\nu^{\omega}_1)\leq \frac{2C_2 n}{\pi \log n}$ and $\max(\ell(\mu^{\omega}),\ell(\nu^{\omega}))\leq \frac{2C_2 n}{\pi \log n}$. We set $\vert \tau^{\omega}\vert=\vert \tau^{\omega}\vert$. In this section, we prove the following estimate.
\begin{lemma}\label{lem:Case_J3}
For $\omega\in \mathcal{J}_3$ and $\ell\geq \log k^5$,
$$\left\vert \lambda_{\omega}^{\ell} \right\vert=e^{-\gamma_{\ell}\vert \tau^{\omega}\vert\left(1+O\left(\frac{1}{\log k}\right)\right)}.$$
If $\vert \tau^{\omega}\vert\leq \frac{1}{\gamma_{\ell}\log n}$, then for $J_{\omega}$,
$$\alpha_{J_{\omega}}^{\ell} =e^{-\gamma_{\ell}\vert \tau^{\omega}\vert\left(1+O\left(\frac{1}{\log k}\right)\right)+iO\left(\gamma_\ell\vert \tau^{\omega}\vert\right)}\alpha_{I_0}^{\ell} .$$
\end{lemma}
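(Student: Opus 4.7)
The plan is to analyze the contour-integral representation
$\alpha_{J_\omega}^{\ell} = \tfrac{1}{2\pi}\int_{-\pi}^{\pi}e^{kf_{J_\omega}(\theta)}\,d\theta$
by writing $f_{J_\omega} = f + \tfrac{1}{k}\delta$ with
$$\delta(\theta) = \sum_j \Bigl[\log\bigl(1+re^{i\theta}\xi(J_\omega)_j\bigr) - \log\bigl(1+re^{i\theta}\xi(I_0)_j\bigr)\Bigr],$$
and then to apply Lemma~\ref{lem:integral_small_pertub_f} in the regime where its hypotheses hold. By the choice of $J_\omega$ in Section~\ref{subsec:summary_bounds_alpha}, only particles near $\pm k\pi/n$ are displaced, with integer shifts encoded by $\mu^\omega,\nu^\omega$, each part being at most $\tfrac{2C_2 n}{\pi\log n}$ sites, hence at most $O(1/\log n)$ in angular units. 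This smallness makes a per-particle first-order Taylor expansion admissible.

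The heart of the computation is evaluating $\Re\delta(0)$. Expanding each summand to first order in the angle shift $\tfrac{2\pi}{n}\mu_i^\omega$ (respectively $\tfrac{2\pi}{n}\nu_i^\omega$), and using the Lipschitz continuity of $\phi\mapsto \Im\bigl(\tfrac{re^{i\phi}}{1+re^{i\phi}}\bigr)$ on windows of size $O(1/\log n)$ around $\pm k\pi/n$ to replace each $\phi_j$ by $\pm k\pi/n$ up to relative error $O(1/\log k)$, one obtains from Lemma~\ref{lem:relation_r_gammal}
$$\Re\delta(0) = -\gamma_\ell|\tau^\omega|\bigl(1+O(1/\log k)\bigr).$$
The same first-order expansion yields $\|\delta\|_\infty = O(r|\tau^\omega|/n) = O(\gamma_\ell|\tau^\omega|)$, and a differentiation argument gives $\Re\delta(\theta) = \Re\delta(0)(1+O(1/\log n))$ uniformly on $|\theta|\leq C/\log n$, so that Lemma~\ref{lem:integral_small_pertub_f} becomes applicable once $\epsilon := \gamma_\ell|\tau^\omega|$ sits in its admissible range.

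For the second assertion, when $|\tau^\omega|\leq 1/(\gamma_\ell\log n)$, this $\epsilon$ satisfies $c/n\leq \epsilon\leq C/\log n$, together with $\|\delta\|_\infty = O(\epsilon) \leq O(\epsilon\log k)$ and $\|\delta\|_\infty^2 = O(\epsilon^2) = O(\epsilon/\log k)$, so Lemma~\ref{lem:integral_small_pertub_f} applies and directly produces
$$\alpha_{J_\omega}^\ell = e^{-\gamma_\ell|\tau^\omega|(1+O(1/\log k)) + iO(\gamma_\ell|\tau^\omega|)}\,\alpha_{I_0}^\ell.$$

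For the first assertion in the remaining regime $|\tau^\omega|>1/(\gamma_\ell\log n)$, the hypothesis $\epsilon<C/\log n$ fails and the saddle-point argument must be redone more carefully. On $|\theta|\leq\theta_0 = \rho\sqrt{\log k/(kr)}$ I would decompose $\delta(\theta) = \delta(0)+(\delta(\theta)-\delta(0))$ and bound the fluctuation by $|\theta|\,\|\delta'\|_\infty = O(\gamma_\ell|\tau^\omega|/\log^2 k)$, which is still absorbed into the $(1+O(1/\log k))$ factor. The main obstacle I anticipate is the tail $|\theta|>\theta_0$: $\Re\delta(\theta)$ is not uniformly nonpositive and can reach $+O(\gamma_\ell|\tau^\omega|)$, so $|e^\delta|$ is not bounded by $1$ there and the naive tail estimate from Lemma~\ref{lem:first_bound_integral} is insufficient. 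The likely resolution is either to telescope $J_\omega$ into a chain of configurations each with $|\tau|\asymp 1/(\gamma_\ell\log n)$ and iterate the small-$\tau$ estimate multiplicatively, or to shift the integration contour off $|z|=r$ to recover uniform negativity of the real part of the exponent in the tail.
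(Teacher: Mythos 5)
Your strategy coincides with the paper's up to a point: the same integral representation, the same decomposition $f_{J_\omega}=f+\tfrac1k\delta$, the same first-order per-particle expansion of $\Re\delta$ (the paper's \eqref{eq:J3_real_part}) converted into $-\gamma_\ell|\tau^{\omega}|(1+O(1/\log k))$ via Lemma~\ref{lem:relation_r_gammal}, and the same use of Lemma~\ref{lem:integral_small_pertub_f} for the second assertion. But for the first assertion in the regime $|\tau^{\omega}|\gtrsim 1/(\gamma_\ell\log n)$ you stop exactly where the work is: you leave the tail unestimated and only speculate about fixes, so the main case of the lemma (the one that feeds the bound on $S_3$) is not proved. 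The point you miss is that $\omega\in\mathcal{J}_3$ forces $|\tau^{\omega}|=O\bigl(n^2/(\log n)^2\bigr)$ (both partitions have largest part and length at most $\tfrac{2C_2n}{\pi\log n}$), hence $\|\delta\|_\infty=O\bigl(r|\tau^{\omega}|/n\bigr)=O\bigl(kr/(\log n)^2\bigr)$ and also $\gamma_\ell|\tau^{\omega}|=O\bigl(kr/(\log n)^2\bigr)$, with constants depending only on $C_2,\eta$. One should therefore cut the integral not at $\theta_0=\rho\sqrt{\log k/(kr)}$ but at $C'/\log n$ with $C'$ large: the per-particle expansion of $\Re\delta$ is still uniformly valid there (all angles stay within $O(1/\log n)$ of $\pm k\pi/n$), while for $|\theta|\geq C'/\log n$ the Gaussian decay $k\Re f(\theta)-kf(0)\lesssim-(C')^2kr/(\log n)^2$ dominates both the possible positive part of $\Re\delta$ and the lost factor $e^{+\gamma_\ell|\tau^{\omega}|}$ once $C'$ is large relative to $C_2$; since $kr\asymp\ell\geq(\log k)^5$, the tail is then $e^{-\gamma_\ell|\tau^{\omega}|}\alpha^{\ell}_{I_0}O(n^{-3})$. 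This is precisely how the paper concludes (its inequality \eqref{eq:ineq_Reg_J3}), so the obstacle you flag is real for your choice of cutoff but disappears with the wider one.

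Your two suggested workarounds would not have rescued the argument as stated. The telescoping idea requires comparing $\alpha^{\ell}_{J'}$ to $\alpha^{\ell}_{J''}$ for two perturbed configurations, whereas Lemma~\ref{lem:integral_small_pertub_f} and the whole saddle-point apparatus (the radius $r$, Lemma~\ref{lem:first_bound_integral}) are calibrated at $I_0$; no such relative estimate is available without redoing the analysis at a shifted reference measure, and the eigenvalue ratios do not multiply along a chain of configurations. Shifting the contour off $|z|=r$ likewise destroys the normalization against $\alpha^{\ell}_{I_0}$ computed at the saddle of $g_{I_0}$. A minor additional caveat: in the small-$|\tau^{\omega}|$ case you assert $\epsilon=\gamma_\ell|\tau^{\omega}|\geq c/n$, which need not hold (e.g.\ $\ell\asymp(\log k)^5$, $|\tau^{\omega}|=1$ gives $\epsilon\asymp(\log k)^5/n^2$); the paper applies Lemma~\ref{lem:integral_small_pertub_f} in the same way, and what is really needed is only that the $O(n^{-2})$ tail be $O(\epsilon/\log k)$ relative to $\alpha^{\ell}_{I_0}$, which does hold here, but you should not claim the hypothesis in the stated form without checking it.
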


\begin{proof}
Throughout this proof, we simply set $J=J_{\omega}$. First $\vert \tau^{\omega}\vert\leq C\frac{n^2}{(\log n)^2}$ implies that $W_1(\mu_J,\mu_{I_0})\leq C\frac{n}{\log n^2}$. Hence, for any $c>0$
\begin{align*}
\frac{1}{2\pi}\int_{-\pi}^{-C'/\log n}e^{kg_{J}(re^{i\theta})-l\log r-li\theta}d\theta=&\frac{1}{2\pi}\int_{-\pi}^{-C/\log n}e^{k\left(f(\theta)+O\left(\frac{r}{\log n^2}\right)\right)}d\theta\\
=&\frac{1}{2\pi}\int_{-\pi}^{-C'/\log n}e^{kf(0)-kf''(r)\theta^2)+O\left(\frac{kr}{\log n^2}\right)}d\theta\\
=&\frac{1}{2\pi}\int_{-\pi}^{-C'/\log n}e^{kf(0)-C^{(2)}kr/(\log n^2)+O\left(\frac{kr}{\log n^2}\right)}d\theta\\
=&O\left(e^{kf(0)-c'\frac{kr}{\log n^2}}\right)=O( \alpha^{\ell} _{I_0} e^{-c\frac{kr}{\log n^2}})
\end{align*}
for $C'$ large enough and some constant $c$ only depending on $C'>0$ and under the condition $r\geq \log n^3$. Similarly,
$$\frac{1}{2\pi}\int_{C'/\log n}^{\pi}e^{kg_{J}(re^{i\theta})-l\log r-li\theta}d\theta=O\left(\alpha^{\ell} _{I_0} e^{-c\frac{kr}{\log n^2}}\right).$$
Since $\vert \tau^{\omega}\vert\leq \frac{Cn^2}{\log n ^2}$, $\frac{r\vert \tau^{\omega}\vert}{n}=O(\frac{rn}{\log n^2})$ and for $c$ large enough 
\begin{align*}
e^{-c\frac{kr}{\log n^2}}=e^{-\gamma_{\ell}\vert \tau^{\omega}\vert-\tilde{c}\frac{kr}{\log n^2}},
\end{align*}
where we used Lemma \ref{lem:relation_r_gammal} to relate $r$ to $\gamma_{\ell}$. For $r\geq \frac{\log n^3}{k}$ and $\tilde{c}$ large enough, $e^{-\tilde{c}\frac{kr}{\log n^2}}\leq n^{-3}$, so that 
$$\frac{1}{2\pi}\int_{C'/\log n}^{\pi}e^{kg_{J}(re^{i\theta})-l\log r-li\theta}d\theta=\alpha^{\ell} _{I_0}e^{-\gamma_{\ell}\vert \tau^{\omega}\vert}O\left(\frac{1}{n^3}\right).$$
Recall that $g_{J}(re^{i\theta})=\sum_{j=1}^k\log(1+re^{i(\theta+\theta_{j}^{J})})$ and set $\theta_0=\frac{k\pi}{n}$. First, using that $\mu_i,\nu_i=O(n/\log n)$, for $\theta=O(1/\log(n))$,
\begin{align}
k\Re(g_{J}(re^{i\theta})-g(re^{i\theta}))=&\sum_{i=1}^{l_1}\Re\log\left(\frac{1+re^{i(\theta_i+2\pi\lambda_i/n+\theta)}}{1+re^{i(\theta_i+\theta)}}\right)+\sum_{i=1}^{l_2}\Re\log\left(\frac{1+re^{i(\theta_{k-i+1}+2\pi\mu_{i}/n+\theta)}}{1+re^{i(\theta_{k-i+1}+\theta)}}\right)\nonumber\\
=&\sum_{i=1}^{l_1}\frac{2\pi\lambda_i}{n}\Re\left(\frac{ire^{i\theta_0}}{1+re^{i\theta_0}}\right)\left(1+O\left(\frac{1}{\log n}\right)\right)+O\left(\frac{r\lambda_i^2}{n^2}\right)\nonumber\\
&\hspace{2cm}-\sum_{i=1}^{l_2}\frac{2\pi\mu_i}{n}\Re\left(\frac{ire^{-i\theta_0}}{1+re^{-i\theta_0}}\right)\left(1+O\left(\frac{r}{\log n}\right)\right)+O\left(\frac{r\mu_i^2}{n^2}\right)\nonumber\\
=&-\frac{\vert \tau^{\omega}\vert}{n}2\pi\Im\left(\frac{re^{-i\theta_0}}{1+re^{-i\theta_0}}\right)\left(1+O\left(\frac{1}{\log n}\right)\right),\label{eq:J3_real_part}
\end{align}
Hence, using Lemma \ref{lem:relation_r_gammal},
$$\frac{1}{2\pi}\int_{-C'/\log n}^{C'/\log n}e^{k\Re g_{J}(re^{i\theta})-l\log r}d\theta=e^{-\vert \tau^{\omega}\vert\gamma_{\ell}\left(1+O\left(\frac{1}{\log n}\right)\right)}\frac{1}{2\pi}\int_{-C'/\log n}^{C'/\log n}e^{k\Re f(\theta)}d\theta.$$

By Lemma \ref{lem:first_bound_integral}
$$\frac{1}{2\pi}\int_{-C'/\log n}^{C/\log n}e^{k\Re g(re^{i\theta})-l\log r}d\theta\leq \alpha^{\ell} _{I_0}\left(1+O\left(\frac{1}{kr}\right)\right),$$
and finally,
\begin{equation}\label{eq:ineq_Reg_J3}
\frac{1}{2\pi}\int_{-\pi}^{\pi}e^{k\Re g_{J}(re^{i\theta})-l\log r}d\theta\leq e^{-\gamma_{\ell}\vert \tau^{\omega}\vert\left(1+O\left(\frac{r}{\log k}\right)\right)}\alpha^{\ell} _{I_0}\left(1+O\left(\frac{1}{kr}\right)+O\left(\frac{1}{n^3}\right)\right).
\end{equation}
In particular, if $\vert \tau^{\omega}\vert\geq \frac{n}{r\log n}$, the result is deduced since for $r\geq \frac{\log n^5}{k}$,
$$1+O\left(\frac{1}{kr}\right)+O\left(\frac{1}{n^3}\right)=1+O\left(\frac{1}{\log n^2}\right)=e^{\gamma_{\ell}\vert \tau^{\omega}\vert O\left(\frac{1}{\log k}\right)}.$$
In the case where $\vert \lambda\vert,\vert\mu\vert=O(\frac{n}{r\log n})$, we have
$$g_{J}(re^{i\theta})=g(re^{i\theta})+O\left(\gamma_{\ell}\vert \tau\vert\right),$$
with $O\left(\gamma_{\ell}\vert \tau\vert\right)=O\left(\frac{1}{\log n}\right)$. Hence, applying Lemma \ref{lem:integral_small_pertub_f} with $\delta(\theta)=g_{J}(re^{i\theta})-g(re^{i\theta})$ and $\epsilon=\gamma_{\ell}\vert \tau^{\omega}\vert$ yields
\begin{align*}
 \alpha^{\ell} _{J}&=\exp\left(-\gamma_{l}\vert \tau^{\omega}\vert\left(1+O\left(\frac{1}{\log k}\right)\right)+iO(\gamma_{l}\vert \tau^{\omega}\vert)\right)\alpha^{\ell} _{I_0}
\end{align*}
when $\vert \tau^{\omega}\vert\leq \frac{1}{\gamma_{l}\log n}$. 
\end{proof}

\subsubsection*{Case $\mathcal{J}_2$} 
For each $\omega\in\mathcal{J}_2$, we choose $J\in\omega$ such that 
$$\int\mathbf{1}_{\vert T^J(x)\vert>k\pi/n+\frac{C_2}{\log n}}(\vert T^J(x)\vert-\vert x\vert)d\mu_{I_0}(x)\leq C_1\frac{k}{\log k},$$
where $T^J$ is the optimal transport map from $\mu_{I_0}$ to $\mu_{J}$, see Section \ref{subsec:summary_bounds_alpha}. We will use the following useful bound. Recall that $d_{TV}(\mu,\nu)$ denotes the total-variation distance between $\mu$ and $\nu$.
\begin{lemma}\label{lem:J2_Wasserstein}
There exists $C>0$ only depending on $C_2,C_1$ such that, for $\omega\in \mathcal{J}_2$, there exists $J\in \omega$ with
$$d_{TV}(\mu_J,\mu_{I_0})\leq C\frac{k}{\sqrt{\log k}}.$$
\end{lemma}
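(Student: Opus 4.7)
The plan is to convert the integral estimate from $\omega\in\mathcal{J}_1^c$ into a count of actually-moved atoms, via a two-region partition. Since $\omega\in\mathcal{J}_2\subset\mathcal{J}\setminus\mathcal{J}_1$, there exists $J\in\omega$ with
\[
\int\mathbf{1}_{|T^J(x)|>k\pi/n+C_2/\log n}\bigl(|T^J(x)|-|x|\bigr)\,d\mu_{I_0}(x)<\frac{C_1 k}{\log k},
\]
and by Lemma~\ref{lem:increasing_transport_map} I fix an optimal transport map $T^J$ satisfying $|T^J(x)|\geq|x|$ for every $x\in\widehat{I_0}$. Because $\mu_J$ and $\mu_{I_0}$ are counting measures on $k$ distinct points of a common lattice in $]-\pi,\pi]$,
\[
d_{TV}(\mu_J,\mu_{I_0})=|\widehat{I_0}\setminus\widehat{J}|=\#\{x\in\widehat{I_0}:T^J(x)\ne x\},
\]
so it suffices to bound the number of displaced atoms.

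I would then split the displaced atoms into a far family $S_{\mathrm{far}}=\{x:T^J(x)\ne x,\ |T^J(x)|>k\pi/n+C_2/\log n\}$ and a boundary family $S_{\mathrm{bd}}=\{x:T^J(x)\ne x,\ |T^J(x)|\leq k\pi/n+C_2/\log n\}$. For $x\in S_{\mathrm{far}}$, the bound $|x|\leq\pi(k-1)/n<k\pi/n$ forces $|T^J(x)|-|x|\geq C_2/\log n$, so summing against the integral inequality gives
\[
|S_{\mathrm{far}}|\cdot\frac{C_2}{\log n}\leq\frac{C_1 k}{\log k},\qquad\text{hence}\qquad|S_{\mathrm{far}}|=O(k/C_2)
\]
after using Assumption~\ref{ass:number-particles} to identify $\log n$ with $\log k$ up to a universal constant. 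For $x\in S_{\mathrm{bd}}$, the image $T^J(x)\in\widehat{J}\setminus\widehat{I_0}$ lies in the two arcs $\{|\theta|\in(\pi(k-1)/n,k\pi/n+C_2/\log n]\}$, and these arcs contain at most $O(C_2 n/\log n)=O(C_2 k/\log k)$ lattice points. Since $T^J$ is a bijection, $|S_{\mathrm{bd}}|=O(C_2 k/\log k)$.

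Combining the two contributions,
\[
d_{TV}(\mu_J,\mu_{I_0})\;\leq\;C'\!\left(\frac{k}{C_2}+\frac{C_2\,k}{\log k}\right),
\]
and the claimed bound $Ck/\sqrt{\log k}$ with $C=C(C_1,C_2)$ follows by balancing these two terms via AM--GM, the optimum being realized for $C_2\asymp\sqrt{\log k}$. The delicate point is the precise lattice-count in the boundary annulus, where the monotonicity given by Lemma~\ref{lem:increasing_transport_map} is essential to confine the images of $S_{\mathrm{bd}}$ to the outward side of $\widehat{I_0}$; I expect this bookkeeping to be the main source of technical care, while an alternative involving a best-alignment shift within the orbit $\omega=\mathbb{Z}\cdot J$ could be used to sharpen the constants but is not strictly needed to reach the stated rate.
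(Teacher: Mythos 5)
There is a genuine gap in your treatment of the ``far'' atoms. With $C_1,C_2$ fixed constants (which they are: they are part of the definition of the partition $\mathcal{J}_1,\mathcal{J}_2,\mathcal{J}_3$, and the lemma asserts a bound with $C$ depending only on them), your estimate $|S_{\mathrm{far}}|\cdot\frac{C_2}{\log n}\leq \frac{C_1 k}{\log k}$ only gives $|S_{\mathrm{far}}|=O(k/C_2)$, i.e.\ a bound of order $k$, not $k/\sqrt{\log k}$. The proposed rescue --- ``balancing'' by taking $C_2\asymp\sqrt{\log k}$ --- is not available: $C_2$ cannot be tuned with $k$, since changing it changes the sets $\mathcal{J}_1,\mathcal{J}_2,\mathcal{J}_3$ themselves (and $C_2$ must remain a constant, later chosen large enough but independent of $n$, for the subsequent estimates on $\mathcal{J}_2$ and $\mathcal{J}_3$ to make sense). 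So as written your argument proves only $d_{TV}(\mu_J,\mu_{I_0})=O(k)$ for the actual sets in question.

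The missing idea is a packing (discreteness) lower bound on the transport cost of the far atoms, which is what the paper uses. The images of $S_{\mathrm{far}}$ are distinct points of the lattice $\frac{2\pi}{n}\mathbb{Z}$ all of modulus at least $\frac{k\pi}{n}+\frac{C_2}{\log n}$, and at most two of them can share a given modulus; since every source point has modulus at most $\frac{k\pi}{n}$, ordering the far images by modulus shows the $j$-th cheapest such move costs at least $\frac{C_2}{\log n}+\frac{2\pi}{n}\lfloor j/2\rfloor$. Summing, the constraint from $\omega\notin\mathcal{J}_1$ reads
\begin{equation*}
\frac{N C_2}{\log n}+\frac{\pi N(N+2)}{2n}\;\leq\;\frac{C_1 k}{\log k},
\end{equation*}
with $N=|S_{\mathrm{far}}|$, and it is the \emph{quadratic} term $N^2/n$ that forces $N=O\bigl(\sqrt{nk/\log k}\bigr)=O\bigl(k/\sqrt{\log k}\bigr)$, with no tuning of $C_2$. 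Your treatment of the boundary atoms (at most $O(C_2 n/\log n)=O(k/\log k)$ lattice sites in the two thin arcs) is fine and matches the paper's second step, and your reduction of $d_{TV}$ to counting displaced/unmatched atoms is harmless as an upper bound (though the claimed equality $d_{TV}(\mu_J,\mu_{I_0})=\#\{x:T^J(x)\neq x\}$ need not hold, since an optimal map may permute points inside $\widehat{I_0}\cap\widehat{J}$; only the inequality $\leq$ is guaranteed, which is all you need). With the quadratic packing bound inserted in place of your linear one, the proof closes exactly as in the paper.
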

\begin{proof}
Suppose that $\omega\in \mathcal{J}_2$ and let $J\in \omega$ be such that 
$$\int\mathbf{1}_{\vert T(x)\vert>k\pi/n+\frac{C_2}{\log n}}(\vert T(x)\vert-\vert x\vert)d\mu_{I_0}(x)\leq C_1\frac{k}{\log k},$$
where $T$ be the corresponding increasing transport map from $\mu_{I_0}$ to $\mu_{J}$. Let $N$ be the number of element $x$ of $J$ such that $\vert x\vert\geq k\pi/n+\frac{C_2}{\log n}$. Since at most two elements of $J$ have the same modulus and all atoms of $\mu_{I_0}$ are in $[-k\pi/n,k\pi/n]$,
$$\int\mathbf{1}_{\vert T(x)\vert>k\pi/n+\frac{C_2}{\log n}}(\vert T(x)\vert-\vert x\vert)d\mu_{I_0}(x)\geq 2\sum_{j=1}^{N/2}\left\vert k\pi/n+\frac{C_2}{\log n}+\frac{2j\pi}{n}-k\pi/n\right\vert\geq \frac{NC_2}{\log n}+\frac{\pi N(N+2)}{2n}.$$
We thus have 
$$\frac{NC_2}{\log n}+\frac{\pi N(N+2)}{2n}\leq C_1\frac{k}{\log k},$$
which implies $N\leq C\frac{k}{\sqrt{\log k}}$ for some constant only depending on $C_2$ and $C_1$. Hence, there are at most $C\frac{k}{\sqrt{\log k}}+\frac{2nC_2}{\log n}$ atoms of $\mu_J$ outside of $[-k\pi/n,k\pi/n]$. By the Pigeon's hole principle, we thus have 
$$d_{TV}(\mu_J,\mu_{I_0})\leq C\frac{k}{\sqrt{\log k}}+\frac{2nC_2}{\log n}\leq C\frac{k}{\sqrt{\log k}},$$
for some constant $C>0$.
\end{proof}
For $J\in B_{k,n}$, set 
$$N(J):=\#\{x\in I_0,\,\min(\vert T^J(x)\vert-k\pi/n,k\pi/n-\vert x\vert)\geq\frac{C_2}{\log n}\}.$$
Remark that by definition, see Section \ref{subsec:summary_bounds_alpha}, $N(J)=0$ if and only if $J\in \omega$ for some $\omega\in \mathcal{J}_3$.
\begin{lemma}\label{lem:J2}
Let $c,t>0$ and $\eta$ small enough universal constant. For $C_2$ large enough independent of $\ell$, for any $J\in B_{k,n}$ with $d_{TV}(\mu_J,\mu_{I_0})\leq C\frac{k}{\sqrt{\log k}}$, there exists $\tilde{J}\in B_{k,n}$ with $N(\tilde{J})=0$ (independent of $r,c,t$ and $C_2$) such that, for $\theta\in[-\eta,\eta]$,
$$d(J)^{\frac{2r}{tk\log k}}e^{k\Re g_{J}(re^{i\theta})}\leq c^{N(J)\frac{r}{t\log k}}d(\tilde{J})^{\frac{2r}{tk\log k} }\max\left(e^{k\Re g_{\tilde{J}}(re^{i\theta})}, e^{-krc}\alpha^{\ell} _{I_0}\right)$$
and 
$$\Im g_{\tilde{J}}(re^{i\theta})=\Im g_{J}(re^{i\theta})+O\left(\frac{r}{k}N(J)\right).$$
\end{lemma}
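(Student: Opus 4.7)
My plan is to construct $\tilde J$ from $J$ by undoing the long transport moves of $T^J$. Let $\mathcal{X} = \{x \in \widehat{I_0} : |T^J(x)| \geq k\pi/n + C_2/\log n \text{ and } |x| \leq k\pi/n - C_2/\log n\}$, so that $|\mathcal{X}| = N(J)$. I define $\widehat{\tilde J} = (\widehat J \setminus T^J(\mathcal X)) \cup \mathcal X$; this construction depends only on $J$ (not on $r, c, t, C_2$), and $N(\tilde J) = 0$ by design. I then interpolate via single-particle swaps $J = J^{(0)} \to J^{(1)} \to \cdots \to J^{(N)} = \tilde J$ and bound the two ratios of interest one swap at a time. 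The availability of a free swap at each step will follow from the bound $d_{TV}(\mu_J, \mu_{I_0}) \leq Ck/\sqrt{\log k}$ provided by Lemma~\ref{lem:J2_Wasserstein}, which guarantees that the interior $[-k\pi/n + C_2/\log n, k\pi/n - C_2/\log n]$ has enough free positions to receive each displaced particle.

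For the real part, using $\Re \log(1 + re^{i\phi}) = \tfrac{1}{2}\log(1 + r^2 + 2r\cos\phi)$ together with the monotonicity of $\cos$ on $[0, \pi]$ and $\sin(k\pi/n) \asymp 1$, each swap of a bad particle at $y = T^J(x_{s+1})$ with $|y| \geq k\pi/n + C_2/\log n$ for the free interior position $x_{s+1}$ increases $k \Re g(re^{i\theta})$ by at least $c_0 r C_2/\log n$, uniformly in $\theta \in [-\eta, \eta]$ for $\eta$ small, with $c_0 > 0$ depending only on $\eta$. Iterating yields $e^{k\Re g_J(re^{i\theta})} \leq e^{-c_0 r C_2 N(J)/\log n} \, e^{k\Re g_{\tilde J}(re^{i\theta})}$, and taking $C_2$ large enough (depending on $c$ and $t$) converts this into the factor $c^{N(J) r/(t \log k)}$. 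For the imaginary part, $|\Im \log(1 + re^{i\phi})| = O(r)$ uniformly over the relevant range of $\phi$ away from $\pi$, so each swap shifts $\Im g$ by $O(r/k)$, and iteration produces $\Im g_{\tilde J}(re^{i\theta}) = \Im g_J(re^{i\theta}) + O(rN(J)/k)$.

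For the Vandermonde factor, the ratio $d(J^{(s)})/d(J^{(s+1)})$ is a product over the $k-1$ unchanged particles of ratios of $|\sin|$-distances; choosing $x_{s+1}$ to be an $I_0$-position (which exists in the interior by the counting argument mentioned above) ensures these distances remain comparable to those in $I_0$, so the ratio is at most $e^{O(k)}$ per step. Raising to the tiny exponent $2r/(tk \log k) = O(1/k)$ gives an absorbable factor of the form $e^{O(r/(t \log k))}$, which can be hidden in $c$ by enlarging $C_2$. The fallback term $e^{-krc}\alpha_{I_0}^{\ell}$ in the $\max$ on the right-hand side covers the degenerate case where the accumulated Vandermonde or $\Re g$ distortions cannot be absorbed—this only occurs when $N(J)$ is of order $k$, at which point $e^{k\Re g_J}$ is already exponentially dominated by $e^{-krc}\alpha_{I_0}^\ell$ independently of $\tilde J$. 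The main obstacle will be this control of the Vandermonde ratio through the swaps: one needs to verify that each replacement $y \mapsto x_{s+1}$ can be routed to an interior slot well separated from the other particles of $\widehat{J^{(s)}}$, which is precisely where the TV bound from Lemma~\ref{lem:J2_Wasserstein} is essential.
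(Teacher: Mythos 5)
Your proposal is correct and follows essentially the same route as the paper's proof: swap each of the $N(J)$ far-out particles into a free position of $\widehat{I_0}$, gaining at least $\asymp rC_2/\log n$ in $k\Re g$ per swap while losing at most $e^{O(k)}$ in the Vandermonde factor, which the tiny exponent $\frac{2r}{tk\log k}$ turns into $e^{O(r/(t\log k))}$ and which is then absorbed by taking $C_2$ large (depending on $c,t$), with the $O\left(\frac{r}{k}N(J)\right)$ drift of $\Im g$ coming from the $O(r)$ per-particle change of the argument. The only cosmetic deviations are your one-shot definition $\widehat{\tilde J}=(\widehat J\setminus T^J(\mathcal X))\cup\mathcal X$, which can clash with already occupied sites and must be replaced (as you in fact do) by routing each displaced particle to a free $I_0$-position, exactly as in the paper's induction on $N(J)$, and your interpretation of the fallback term $e^{-krc}\alpha^{\ell}_{I_0}$, which in the paper serves the regime $\theta\notin[-\eta,\eta]$ of the subsequent integral bound rather than large $N(J)$ --- immaterial here since the max only weakens the claimed inequality.
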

\begin{proof}
We will prove by recursion on $N(J)$ that there exists $\tilde{J}\in B_{k,n}$ with $N(\tilde{J})=0$ such that 
$$d(J)^{\frac{2r}{tk\log k}}e^{k\Re g_{J}(re^{i\theta})}\leq c^{\frac{r}{t\log k}}d(\tilde{J})^{\frac{2r}{tk\log k} }e^{k\Re g_{\tilde{J}}(re^{i\theta})}$$
for $\theta \in[-\eta,\eta]$ for $C_2$ large enough.

If $N(J)=1$, there exists a pair $(x,y)$ with $y\in J\setminus I_0, x\in I_0\setminus J$ with either $\vert y\vert\geq \pi k/n+\frac{C_2}{\log n}$ and $\vert x\vert\leq \frac{k\pi}{n}$,  or $\vert y\vert\geq \frac{k\pi}{n}$ and $\vert x\vert\leq \pi k/n-\frac{C_2}{\log n}$. Denote by $\tilde{J}$ the configuration obtained by replacing $y$ by $x$. Then,
$$\#\{x\in \tilde{J},\,\min(\vert T(x)\vert-k\pi/n,k\pi/n-\vert x\vert)\geq\frac{C_2}{\log n}\}=N(J)-1=0,$$
so that $N(\tilde{J})=0$. Let $\eta>0$ be small enough (and universal). For $\theta\in]-\eta,\eta[$, there exists $C>0$ such that
\begin{align*}
\Re g_{J}(re^{i\theta})-\Re g_{\tilde{J}}(e^{i\theta})=&\frac{1}{k}\left(\Re\log(1+re^{i(\theta+y)})-\Re\log(1+re^{i(\theta+x)}\right)\\
\leq& \frac{1}{k}\log\left\vert\frac{1+re^{i(\theta+y)}}{1+re^{i(\theta+x)}}\right\vert\leq \frac{Cr}{k}(x-y)\leq -\frac{CrC_2}{k\log n}.
\end{align*}
Hence, 
$$e^{k\Re g_{J}(re^{i\theta})}\leq e^{-\frac{CrC_2}{\log k}}e^{k\Re g_{\tilde{J}}(re^{i\theta})}.$$
On the other hand, let $l,l'$ be such that $\frac{2\pi}{n}J_{\ell}-\frac{k+1}{2}=x$ and $\frac{2\pi}{n}\tilde{J}_{l'}-\frac{k+1}{2}=y$. Since $\{J_s,s\not=l\}=\{\tilde{J}_s,s\not=l'\}$,
$$\left\vert \frac{d(\tilde{J})}{d(J)}\right\vert =\frac{\prod_{s\not =l'} \sin(\pi \vert J_s-\tilde{J}_{l'}\vert/n)}{\prod_{s\not =l} \sin(\pi \vert J_s-J_{l}\vert/n)}\geq \prod_{s\not =l'} \sin(\pi \vert J_s-\tilde{J}_{l'}\vert/n)\geq \prod_{j=1}^{k/2}\sin(\pi j/n)^2.$$
Remark that 
$$\prod_{j=1}^{k/2}\sin(\pi j/n)^2=\exp\left(k\left(\frac{2}{k}\sum_{j=1}^{k/2} \log \sin(\pi j/n)\right)\right)=\exp\left(2n\left(\int_0^{1/2}\log\sin(\frac{k\pi}{n} t)dt+o(1)\right)\right)\geq \kappa^{-n}$$
for some universal constant $\kappa>0$. 
Hence, 
$$\vert d(J)\vert^2\leq \kappa^{2k}\vert d(\tilde{J})^2,$$
so that 
$$d(J)^{\frac{2r}{tk\log k}}e^{k\Re g_{J}(re^{i\theta})}\leq \alpha^{\frac{2r}{t\log k}} e^{-\frac{CrC_2}{\log k}}d(\tilde{J})^{\frac{2r}{tk\log k} }e^{k\Re g_{\tilde{J}}(re^{i\theta})}.$$
For $C_2$ large enough, $CC_2-\frac{2}{t}\log \kappa>\frac{\log c}{t}$, and then 
$$d(J)^{\frac{2r}{tk\log k}}e^{k\Re g_{J}(re^{i\theta})}\leq c^{\frac{r}{t\log k}}d(\tilde{J})^{\frac{2r}{tk\log k} }e^{k\Re g_{\tilde{J}}(re^{i\theta})},$$
which gives the first step of the recursion.

If $J\in B_{k,n}$ is such that $d_{TV}(\mu_J,\mu_{I_0})\leq C\frac{k}{\sqrt{\log k}}$ and $N(J)>1$, doing the same reasoning as before yields 
$$d(J)^{\frac{2r}{tk\log k}}e^{k\Re g_{J}(re^{i\theta})}\leq c^{\frac{r}{t\log k}}d(\hat{J})^{\frac{2r}{tk\log k}}e^{k\Re g_{\tilde{J}}(re^{i\theta})},$$
with $\hat{J}$ such that $N(\hat{J})=N(J)-1$. Remark moreover that 
$$d_{TV}(\mu_{\hat{J}},\mu_{I_0})\leq d_{TV}(\mu_{J},\mu_{I_0})\leq  C\frac{k}{\sqrt{\log k}}$$
 By induction, there exists $\tilde{J}\in B_{k,n}$ with $N(\tilde{J})=0$ such that 
$$d(\hat{J})^{\frac{2r}{tk\log k}}e^{k\Re g_{\hat{J}}(re^{i\theta})}\leq c^{N(\hat{J})\frac{r}{t\log k}}d(\tilde{J})^{\frac{2r}{tk\log k} }e^{k\Re g_{\tilde{J}}(re^{i\theta})}.$$
Combining the two latter inequalities yields thus 
$\tilde{J}$ such that 
\begin{equation}\label{eq:J2_bound_in_eta}
d(J)^{\frac{2r}{tk\log k}}e^{k\Re g_{J}(re^{i\theta})}\leq c^{N(J)\frac{r}{t\log k}}d(\tilde{J})^{\frac{2r}{tk\log k} }e^{k\Re g_{\tilde{J}}(re^{i\theta})}
\end{equation}
for $\theta\in[-\eta,\eta]$. Then, since $d_{TV}(\mu_J,\mu_{\tilde{J}})=O(N(J))$, $k\Im g_{\tilde{J}}(re^{i\theta})=k\Im g_{J}(re^{i\theta})+O(rN(J))$. 

For $\theta\not\in[-\eta,\eta]$, remark that we have by Lemma \ref{lem:J2_Wasserstein} $g_{J}(re^{i\theta})\leq g(re^{i\theta})+\frac{\tilde{C}}{\sqrt{\log k}}\leq g(r)-cr$ for some $c>0$ and $k$ large enough. Since $\frac{d(J)^{\frac{2r}{t\log k}}}{d(\tilde{J})^{\frac{2r}{t\log k}}}\leq \kappa^\frac{2rk}{t\log k}$, adjusting $c$ and taking $k$ large enough yields
\begin{equation}\label{eq:J2_bound_out_eta}
d(J)^{\frac{2r}{tk\log k}}e^{k\Re g_{J}(re^{i\theta})-l\log r}\leq c^{\frac{r}{t\log k}}d(\tilde{J})^{\frac{2r}{tk\log k} }e^{-crk}\alpha^{\ell} _{I_0}
\end{equation}
for $\theta\not\in[-\eta,\eta]$. Combining \eqref{eq:J2_bound_in_eta} and \eqref{eq:J2_bound_out_eta} yields
\begin{align*}
d(J)^{\frac{2r}{tk\log k}}\vert \alpha_J^{\ell} \vert&\leq d(J)^{\frac{2r}{tk\log k}}\left\vert\frac{1}{2\pi}\int_{-\pi}^{\pi}e^{k\Re g_{J}(re^{i\theta})-l\log r}d\theta\right\vert\\
&\leq d(\tilde{J})^{\frac{2r}{tk\log k} }\max\left(c^{N(J)\frac{r}{t\log k}}\frac{1}{2\pi}\int_{-\pi}^{\pi}e^{k\Re g_{\tilde{J}}(re^{i\theta})-l\log r}d\theta,e^{-krc}\alpha^{\ell} _{I_0}\right).
\end{align*}
\end{proof}
%N(J) to improve to sum J_i max t oget the log n factor ?%.
\begin{proposition}\label{prop:case_J2}
Let $c_2,t_0>0$. For $C_2$ large enough, for all $\omega\in\mathcal{J}_2$ there exists $\tilde{\omega}\in \mathcal{J}_3$ such that for any $t\geq t_0$, denoting $\tau$ the corresponding pair of partitions given by $J_{\tilde{\omega}}$,
$$d(\omega)^{\frac{2r}{tk\log k}}\vert \lambda_{\omega}^{\ell} \vert\leq d(\tilde{\omega})^{\frac{2r}{tk\log k}}\exp\left(-\gamma_{\ell}\vert \tau^{\tilde{\omega}}\vert\left(1+O\left(\frac{1}{\log k}\right)\right)-\frac{rc_2}{t\log k}\right),$$
provided $\ell\geq \log k^{7}$.
\end{proposition}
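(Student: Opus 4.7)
The plan is to assemble Lemma~\ref{lem:J2_Wasserstein}, Lemma~\ref{lem:J2}, and the $\mathcal{J}_3$ integral estimate \eqref{eq:ineq_Reg_J3} from the proof of Lemma~\ref{lem:Case_J3}. First, given $\omega\in\mathcal{J}_2$, I would invoke Lemma~\ref{lem:J2_Wasserstein} to pick a representative $J\in\omega$ with $d_{TV}(\mu_J,\mu_{I_0})\leq Ck/\sqrt{\log k}$; membership $\omega\in\mathcal{J}_2$ forces $N(J)\geq 1$. Applying Lemma~\ref{lem:J2} with its internal constant $c$ chosen equal to $e^{-c_2}$ and $C_2$ taken large enough produces $\tilde{J}\in B_{k,n}$ with $N(\tilde{J})=0$; integrating the pointwise inequality over $[-\eta,\eta]$ and combining with the coarse bound outside $[-\eta,\eta]$ (as at the end of the proof of Lemma~\ref{lem:J2}), then dividing by $\alpha_{I_0}^{\ell}$, I obtain
\[
d(\omega)^{\frac{2r}{tk\log k}}\vert\lambda_{\omega}^{\ell}\vert\leq d(\tilde{\omega})^{\frac{2r}{tk\log k}}\max\!\left(c^{N(J)r/(t\log k)}\frac{1}{2\pi\alpha_{I_0}^{\ell}}\int_{-\pi}^{\pi}e^{k\Re g_{\tilde{J}}(re^{i\theta})-\ell\log r}\,d\theta,\;e^{-krc}\right),
\]
where $\tilde{\omega}$ denotes the equivalence class of $\tilde{J}$.

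The next step is to verify $\tilde{\omega}\in\mathcal{J}_3$. Since $N(\tilde{J})=0$, $\tilde{\omega}\notin\mathcal{J}_2$ is immediate. To exclude $\tilde{\omega}\in\mathcal{J}_1$, I would note that for $\tilde{J}$ the only atoms $x\in\widehat{I_0}$ with $|T^{\tilde{J}}(x)|>k\pi/n+C_2/\log n$ must have $|x|>k\pi/n-C_2/\log n$; such atoms number $O(n/\log n)$ and each contributes at most $\pi$ to the $\mathcal{J}_1$-integral, yielding a total of $O(n/\log n)$, which is strictly less than $C_1 k/\log k$ provided $C_1$ is chosen large with respect to $C_2$. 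This is compatible with the order of constants in the proof of Proposition~\ref{prop:upper_bound_1}, where $C_1$ is fixed first and $C_2$ later, so $\tilde{\omega}\in\mathcal{J}_3$ and the pair $\tau^{\tilde{\omega}}$ is well-defined.

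Finally, I would apply \eqref{eq:ineq_Reg_J3}, which bounds the inner integral by $\alpha_{I_0}^{\ell}\exp(-\gamma_{\ell}|\tau^{\tilde{\omega}}|(1+O(1/\log k)))$. With $c=e^{-c_2}$ and $N(J)\geq 1$, the first argument of the max reduces to $\exp(-c_2 r/(t\log k)-\gamma_{\ell}|\tau^{\tilde{\omega}}|(1+O(1/\log k)))$, matching the claimed bound. For the second argument, Lemma~\ref{lem:relation_r_gammal} gives $\gamma_{\ell}\asymp r/n$ and, since $\tilde{\omega}\in\mathcal{J}_3$, one has $|\tau^{\tilde{\omega}}|=O(n^2/\log^2 n)$, whence $\gamma_{\ell}|\tau^{\tilde{\omega}}|=O(rn/\log^2 n)$ and $c_2 r/(t\log k)=O(r)$, both dominated by $krc$; so $e^{-krc}$ is absorbed into the desired upper bound.

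The analytic work is essentially done in Lemmas~\ref{lem:J2} and~\ref{lem:Case_J3}; the contribution of this proof is to splice them through the representative choice provided by Lemma~\ref{lem:J2_Wasserstein}. The main obstacle is therefore one of bookkeeping: making sure that a single choice of the constants $C_1\gg C_2\gg c_2,1$ simultaneously makes Lemma~\ref{lem:J2} applicable with $c=e^{-c_2}$, forces $\tilde{\omega}$ into $\mathcal{J}_3$ rather than $\mathcal{J}_1$, and is compatible with the constants needed in the parallel $\mathcal{J}_1$- and $\mathcal{J}_3$-bounds used later in the proof of Proposition~\ref{prop:upper_bound_1}.
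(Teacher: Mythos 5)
Your first two steps (choosing the representative via Lemma~\ref{lem:J2_Wasserstein}, applying Lemma~\ref{lem:J2}, and identifying the class $\tilde{\omega}$ of $\tilde{J}$; also the absorption of the $e^{-krc}$ branch of the max) follow the paper's proof, and your side remark that one should check $\tilde\omega\notin\mathcal{J}_1$, with $C_1$ large relative to $C_2$, is a fair piece of bookkeeping that the paper only asserts implicitly. The genuine gap is in your final step: you quote \eqref{eq:ineq_Reg_J3} as if it bounded the integral by $\alpha_{I_0}^{\ell}\exp\left(-\gamma_{\ell}\vert\tau^{\tilde\omega}\vert\left(1+O\left(\tfrac{1}{\log k}\right)\right)\right)$, but that estimate carries an additional multiplicative error $\left(1+O\left(\tfrac{1}{kr}\right)+O\left(\tfrac{1}{n^3}\right)\right)$, and dropping it is fatal in part of the range covered by the proposition. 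Indeed $\tfrac{1}{kr}\asymp\tfrac{1}{\ell}$, while the gain you must retain is $e^{-c_2 r/(t\log k)}$ with $\tfrac{r}{t\log k}\asymp\tfrac{\ell}{tk\log k}$; when $N(J)=O(1)$, $\vert\tau^{\tilde\omega}\vert=O(1)$ and $\ell$ is only polylogarithmic (say $\ell=\log^{7}k$, which the statement allows), the error $O(1/\ell)$ is enormously larger than the factor $1-\Theta\!\left(\tfrac{\ell N(J)}{tk\log k}\right)$ coming from Lemma~\ref{lem:J2}, and it cannot be absorbed into $\gamma_{\ell}\vert\tau^{\tilde\omega}\vert\,O\!\left(\tfrac{1}{\log k}\right)$ either, since $\gamma_{\ell}\vert\tau^{\tilde\omega}\vert\asymp \tfrac{r\vert\tau^{\tilde\omega}\vert}{n}$ is then tiny. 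So the chain of inequalities you propose does not yield the claimed bound uniformly over $\mathcal{J}_2$.

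This is exactly why the paper's proof splits into cases. When $N(J)\gtrsim \tfrac{t}{r\log k}$ or $\tfrac{r\vert\tau^{\tilde\omega}\vert}{n}\geq\tfrac{1}{\log n}$, the crude real-part bound \eqref{eq:ineq_Reg_J3} suffices because the $O\!\left(\tfrac{1}{kr}\right)$ error can be absorbed either into half of the exponential gain (this is why the paper takes the constant in Lemma~\ref{lem:J2} large and only keeps $c_2=c/2$, whereas your choice $c=e^{-c_2}$ leaves no slack when $N(J)=1$) or into $\gamma_{\ell}\vert\tau^{\tilde\omega}\vert\,O\!\left(\tfrac{1}{\log k}\right)$. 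In the complementary regime (both $N(J)$ and $\vert\tau^{\tilde\omega}\vert$ small) the paper abandons \eqref{eq:ineq_Reg_J3} and instead applies the perturbative estimate Lemma~\ref{lem:integral_small_pertub_f} with $\delta$ built from $k(g_{\tilde J}-g)$, the phase error $iO(rN(J))$ from Lemma~\ref{lem:J2}, and $\epsilon=cN(J)\tfrac{r}{t\log k}+\gamma_{\ell}\vert\tau^{\tilde\omega}\vert$, after verifying $\Vert\delta\Vert_\infty=O(\epsilon\log k)$ and $\Vert\delta\Vert_\infty^2=O(\epsilon/\log k)$; this produces a relative error proportional to $\epsilon/\log k$ rather than the absolute $O(1/(kr))$, and it is precisely where the hypothesis $\ell\geq\log^{7}k$ is used. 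Your proposal needs this second case (or some substitute for it) to be a complete proof.
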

\begin{proof}
Let $\omega\in\mathcal{J}_2$ and $J\in \omega$ the corresponding particle configuration introduced at the beginning of the section. By Lemma \ref{lem:J2_Wasserstein} and Lemma \ref{lem:J2}, for $C_2$ large enough, there exists $\tilde{J}$ with $N(\tilde{J})=0$ such that
$$d(J)^{\frac{2r}{tk\log k}}\vert \alpha_J^{\ell} \vert\leq  e^{-cN(J)\frac{r}{t\log k}}d(\tilde{J})^{\frac{2r}{tk\log k} }\max\left(\frac{1}{2\pi}\int_{-\pi}^{\pi}e^{kg_{\tilde{J}}(re^{i\theta})+iO(rN(J))-l\log r}d\theta, e^{-krc}\alpha^{\ell} _{I_0}\right).$$
Let $\tilde{\omega}\in\mathcal{J}_3$ be the equivalence class of $\tilde{J}$ and $J_{\tilde{\omega}}$ the corresponding representative given in Section \ref{subsec:summary_bounds_alpha}. Denote by $\tau^{\tilde{\omega}}=(\mu^{\tilde{\omega}},\nu^{\tilde{\omega}})$ the corresponding pair of partitions. Let us first assume that either $\frac{r\vert \tau^{\tilde{\omega}}\vert}{n}\geq \frac{1}{\log n}$ or $N(J)\geq \frac{t\tilde{c}}{cr\log k}$, with $\tilde{c}$ numeric large enough to be specified later. By \eqref{eq:ineq_Reg_J3}, 
\begin{align*}
e^{-cN(J)\frac{r}{t\log k}}\left\vert \frac{1}{2\pi}\int_{-\pi}^{\pi}e^{kg_{\tilde{J}}(re^{i\theta})+iO(rN(J))-l\log r}d\theta\right\vert\leq & \frac{1}{2\pi}\int_{-\pi}^{\pi}e^{k\Re g_{\tilde{J}}(re^{i\theta})-l\log r-cN(J)\frac{r}{t\log k}}d\theta\\
\leq& e^{-\gamma_{\ell}\vert \tau^{\tilde{\omega}}\vert\left(1+O\left(\frac{1}{\log k}\right)\right)-cN(J)\frac{r}{t\log k}+O\left(\frac{1}{kr}\right)}\alpha^{\ell} _{I_0}.
\end{align*} 
If $N(J)\geq \frac{t\tilde{c}}{cr\log k^2}$, then $cN(J)\frac{r}{t\log k}\leq\frac{\tilde{c}}{\log k^3}$ and for $l\geq \log k^4$, which guarantees $\frac{1}{kr}=o\left(\frac{1}{\log k^3}\right)$,
$$-cN(J)\frac{r}{t\log k}+O\left(\frac{1}{kr}\right)\leq  -\frac{c_2 r N(J)}{t\log k}$$
for $c_2=c/2$. Likewise, if $\frac{r\vert \tau^{\tilde{\omega}}\vert}{n}\geq \frac{1}{\log n}$ then $\frac{1}{kr}=O\left(\frac{r\vert \tau^{\tilde{\omega}}\vert}{n\log k}\right)$. Hence, in any case,
$$\exp^{-\gamma_{\ell}\vert \tau^{\tilde{\omega}}\vert\left(1+O\left(\frac{1}{\log k}\right)\right)-cN(J)\frac{r}{t\log k}+O\left(\frac{1}{kr}\right)}\alpha^{\ell} _{I_0}\leq e^{-\gamma_{\ell}\vert \tau^{\tilde{\omega}}\vert\left(1+O\left(\frac{1}{\log k}\right)\right)-\frac{c_2}{\log k}}\alpha^{\ell} _{I_0},$$
and the result is deduced. 

Assume now that $N(J)\leq \frac{t\tilde{c}}{cr\log k}$ and $\frac{r\vert \tau^{\tilde{\omega}}\vert}{n}\leq \frac{1}{\log n}$. Then, setting $\delta(\theta)=k(g_J(re^{i\theta})-g(re^{i\theta}))-cN(J)\frac{r}{t\log k}+iO\left(rN(J)\right)$ and $\epsilon=cN(J)\frac{r}{t\log k}+\gamma_{\ell}\vert \tau^{\tilde{\omega}}\vert$, we have $\Vert\delta\Vert_{\infty}=O\left(\epsilon\log k\right)$. Moreover, if $\frac{r\vert \tau^{\tilde{\omega}}\vert}{n}\geq cN(J)\frac{r}{t}$, then the condition $\frac{r\vert \tau^{\tilde{\omega}}\vert}{n}\leq \frac{1}{\log n}$ ensures 
$$\Vert\delta\Vert_{\infty}^2=O\left(\left(\frac{r\vert \tau^{\tilde{\omega}}\vert}{n}\right)^2\right)=O\left(\frac{1}{\log k}\frac{r\vert \tau^{\tilde{\omega}}\vert}{n}\right)=O\left(\frac{\epsilon}{\log k}\right).$$
If $cN(J)\frac{r}{t}\geq \frac{r\vert \tau^{\tilde{\omega}}\vert}{n}$, then, since $\frac{N(J)r}{t}=O\left(\frac{1}{\log k}\right)$,
$$\Vert\delta\Vert_{\infty}^2=O\left(\left(cN(J)\frac{r}{t}\right)^2\right)=\frac{1}{\log k^2}O\left(\left(cN(J)\frac{r}{t}\right)\right)=\frac{1}{\log k}O\left(\left(cN(J)\frac{r}{t\log k }\right)\right)=O\left(\frac{\epsilon}{\log k}\right).$$
Finally, $\Re\delta(\theta)\leq -\epsilon\left(1+\frac{1}{\log k}\right)$ on $[-\frac{\tau\sqrt{\log k}}{\sqrt{kr}},\frac{\tau\sqrt{\log k}}{\sqrt{kr}}]$ with $\tau$ large enough. Hence, applying Lemma \ref{lem:integral_small_pertub_f} yields
\begin{align*}
e^{-cN(J)\frac{r}{t\log k}}\left\vert \frac{1}{2\pi}\int_{-\pi}^{\pi}e^{kg_{\tilde{J}}(re^{i\theta})+iO(rN(J))-l\log r}d\theta\right\vert\leq\exp\left(-\left(\gamma_{\ell}\vert \tau^{\tilde{\omega}}\vert+\frac{cN(J)r}{t\log k}\right)\left(1+O\left(\frac{1}{\log k}\right)\right)\right)\alpha^{\ell} _{I_0},
\end{align*}
provided $\ell\geq \log n^7$. Since 
$\vert \tau^{\tilde{\omega}}\vert\leq \frac{n^2}{\log n^2}$ for $\tilde{J}\in\mathcal{J}_3$, and $N(J)\leq C\frac{k}{\sqrt{\log k}}$ by the proof of Lemma \ref{lem:J2_Wasserstein},
$$\gamma_{\ell}\vert \tau^{\tilde{\omega}}=O\left(\frac{rn^2}{n^2\log n^2}\right)=O\left(\frac{r}{\log n^2}\right),\, \frac{cN(J)r}{t\log k}=O\left(\frac{rk}{\log k^{3/2}}\right),$$
and thus 
$$e^{-krc}=O\left(\exp\left(-\left(\gamma_{\ell}\vert \tau^{\tilde{\omega}}\vert+\frac{cN(J)r}{t\log k}\right)\left(1+O\left(\frac{1}{\log k}\right)\right)\right)\right).$$
The result is then deduced.
\end{proof}
\section{Estimates in the sub-logarithmic regime}\label{sec:estimates_lowerlog}
The goal of this section is to prove Proposition \ref{prop:estimates_J} in the following cases :
\begin{enumerate}
\item $\omega\in\mathcal{J}_1$ and $\ell\leq C_1'\left(\log k\right)^2$ for some $C_1'>0$,
\item $\omega\in\mathcal{J}_2$ for $C_2$ large enough and $\ell\geq (\log n)^s$ for some $s>0$,
\item $\omega\in\mathcal{J}_3$ and $\ell\geq (\log n)^7$.
\end{enumerate}
As in the former section, we assume that 
$0\leq\ell\leq k/2$. Let us recall the Newton's identities
$$e_{\ell}(x_1,\ldots,x_k)=\sum_{\substack{\lambda\vert \ell\\\lambda=1^{m_1}\ldots k^{m_k}}}(-1)^{n-\sum_{s=1}^km_s}\frac{\prod_{s=1}^kp_s(x_1,\ldots,x_k)^{m_s}}{\prod_{s=1}^km_s!s^{m_s}}.$$
\begin{lemma}\label{lem:asymptotic_el_p1}
Suppose that $J\in B_{k,n}$ is such that $d_{TV}(\mu_J,\mu_{I_0})\leq C\frac{k}{\sqrt{\log k}}$ and $\ell\leq \log k^s$ for some $s\geq 1$. Then, writing $p_i=p_i(\xi(J))$ for $i\geq 1$,
$$e_{\ell}(\xi(J))=\frac{1}{\ell!}\left(p_1^{\ell} -\frac{\ell(\ell-1)}{2}p_1^{\ell-2}p_2+a_1(\ell)p_1^{\ell-3}p_3-a_2(\ell)p_1^{\ell-4}p_2^2+ p_1^{\ell} O\left(\frac{\log k^{s'}}{k^3}\right)\right),$$
with $a_1(\ell),a_2(\ell)$ polynomials in $\ell$, $s'$ only depending on $s$ and $O(.)$ only depending on $C$.
\end{lemma}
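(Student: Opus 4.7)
The approach is to use Newton's identity in generating-function form:
$$\sum_{\ell\geq 0}e_\ell(\xi(J))\,t^\ell \;=\; \exp\Bigl(\sum_{s\geq 1}(-1)^{s-1}\tfrac{p_s(\xi(J))}{s}\,t^s\Bigr) \;=\; e^{p_1 t}\,e^{R(t)}, \qquad R(t):=\sum_{s\geq 2}(-1)^{s-1}\tfrac{p_s}{s}\,t^s.$$
The four stated main terms correspond to extracting the $[t^\ell]$-coefficient of $e^{p_1 t}$ times the low-order truncation $M(t) := 1 - p_2 t^2/2 + p_3 t^3/3 + p_2^2 t^4/8$ of $e^{R(t)}$. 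A direct convolution produces $a_1(\ell)=\ell(\ell-1)(\ell-2)/3$ and a polynomial of degree four in $\ell$ for the $p_2^2$ term, so the task reduces to bounding $[t^\ell]\bigl(e^{p_1 t}(e^{R(t)}-M(t))\bigr)$ by $p_1^\ell\,\ell!^{-1}\cdot O((\log k)^{s'}/k^3)$.

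Next I would record the two elementary bounds on the power sums. Since $p_1(\xi(I_0))=\sin(k\pi/n)/\sin(\pi/n)\asymp n$ under Assumption~\ref{ass:number-particles} and $|p_1(\xi(J))-p_1(\xi(I_0))|\leq d_{TV}(\mu_J,\mu_{I_0})\leq Ck/\sqrt{\log k}$, we get $|p_1(\xi(J))|\geq cn$ for $k$ large. Combined with the trivial estimate $|p_s(\xi(J))|\leq k$, this yields the crucial decay
$$\bigl|p_s(\xi(J))/p_1(\xi(J))^s\bigr|\;\leq\;C/n^{s-1},\qquad s\geq 2,$$
which is the workhorse of the estimate.

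The error sum runs over tuples $(m_s)_{s\geq 2}$ with $M:=\sum_{s\geq 2}s m_s\leq\ell$ and $A:=\sum_{s\geq 2}(s-1)m_s\geq 3$. Rescaling by $p_1^\ell$, applying the decay bound and $\ell!/(\ell-M)!\leq\ell^M=\ell^{A+r}$ (with $r:=\sum_{s\geq 2}m_s\leq A$), the error is dominated by $|p_1|^\ell\,\ell!^{-1}$ times
$$\sum_{A\geq 3}\frac{1}{n^A}\sum_{r=1}^{A}(C\ell)^r\,\ell^A\sum_{\substack{(m_s)_{s\geq 2}\\ \sum m_s = r,\ \sum (s-1)m_s=A}}\frac{1}{\prod_{s\geq 2}m_s!\,s^{m_s}} \;\leq\; \sum_{A\geq 3}A\,p(A)\bigl(C\ell^2/n\bigr)^A.$$
Since $\ell\leq(\log k)^{s}$ implies $\ell^2/n\to 0$, the right-hand side is a convergent geometric series, bounded by $O\bigl((\ell^2/n)^3\bigr)=O\bigl((\log k)^{6s}/k^3\bigr)$, producing the lemma with $s'=6s$ (a polynomial in $s$).

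The main technical obstacle is keeping the combinatorial estimate uniform in $J$ and in $s$, and checking that constants really do depend only on $C$. The crude bounds above suffice because $\ell^2/n\to 0$, but if one wanted a sharper $s'$ it would be necessary to split the sum at some threshold $s_0$ and exploit the refined estimate $|p_s(\xi(J))|\leq 1/|\sin(s\pi/n)|+d_{TV}(\mu_J,\mu_{I_0})\leq Cn/s$ valid for small $s$ (which follows from the closed form $p_s(\xi(I_0))=\sin(sk\pi/n)/\sin(s\pi/n)$), gaining an extra factor $1/s^{m_s}$ inside the combinatorial sum.
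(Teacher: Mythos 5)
Your proposal is correct and it recovers exactly the stated main terms (with $a_1(\ell)=\ell(\ell-1)(\ell-2)/3$ and a degree-four polynomial for the $p_2^2$ coefficient) together with an admissible exponent $s'$. It starts from the same point as the paper, namely Newton's identities with the main contribution coming from the monomials $\prod_{s}p_s^{m_s}$ having $\sum_{s\ge 2}(s-1)m_s\le 2$, but the tail is handled differently. The paper bounds every $|p_s|$, $s\ge 2$, by $|p_1(\xi(J))|$ and then identifies the resulting sum with the cycle-counting generating function of $S_\ell$, i.e. $\prod_{i=0}^{\ell-1}(1+i/x)$ with $x=|p_1(\xi(J))|$, whose expansion yields a relative error $O(\ell^4/x^3)$, hence $s'=4s$; you instead keep the partition structure, use the cruder $|p_s(\xi(J))|\le k$ but the stronger $|p_1(\xi(J))|\ge cn$ (both consequences of the total-variation hypothesis and of $\eta\le k/n\le 1-\eta$), and sum a series indexed by $A=\sum_{s\ge 2}(s-1)m_s$ which is geometrically small because $\ell^2/n\to 0$, giving the slightly weaker but equally sufficient $s'=6s$. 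A side benefit of your route is that it never needs the comparison $|p_s(\xi(J))|\le|p_1(\xi(J))|$, which the paper deduces from $|p_s(\xi(I_0))|\le\alpha|p_1(\xi(I_0))|$ and which implicitly requires $\alpha<1$; the price is the worse exponent, irrelevant for the application. One cosmetic correction: from $|p_s|\le k$ and $|p_1|\ge cn$ the decay bound reads $|p_s|/|p_1|^s\le C^{s}/n^{s-1}$ rather than $C/n^{s-1}$, so the term-by-term bound carries a factor $C^{A+r}$; this is harmless, being absorbed into the convergent series $\sum_{A\ge 3}A\,p(A)\bigl(C'\ell^2/n\bigr)^{A}=O\bigl((\ell^2/n)^3\bigr)$, but you should state it in that form so that the constants depend only on $C$ and $\eta$, as in the lemma.
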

\begin{proof}
Following Newtons identities, we write 
$$e_{\ell}(x_1,\ldots,x_k)=\frac{1}{\ell!}p_1^{\ell} -\frac{p_1^{\ell-1}p_{2}}{2(l-2)!}+\frac{p_1^{\ell-3}p_3}{3(l-3)!}+\frac{p_1^{\ell-4}p_2^2}{2^3(\ell-4)!}+
\sum_{\substack{\lambda\vert l\\\lambda=1^{m_1}\ldots l^{m_{\ell}}\\\sum_{i=1}^{\ell} m_i\leq l-3}}(-1)^{\ell-\sum_{s=1}^{\ell} m_s}\frac{\prod_{s=1}^{\ell} p_s(x_1,\ldots,x_k)^{m_s}}{\prod_{s=1}^{\ell} m_s!s^{m_s}}.$$
Next, remark that $p_s(\xi(J))=\int_{-\pi}^\pi e^{isx}d\mu_j(x)$. Since $d_{TV}(\mu_J,\mu_{I_0})\leq C\frac{k}{\sqrt{\log k}}$ for some constant $C>0$, for $1\leq s\leq k$,
$$\vert p_s(\xi(I_0))\vert-C\frac{k}{\sqrt{\log k}}\leq \vert p_s(\xi(J))\vert\leq \vert p_s(\xi(I_0))\vert+C\frac{k}{\sqrt{\log k}}.$$ 
Since $\vert p_s(\xi(I_0))\vert=\frac{\vert \sin(sk\pi/n)\vert}{\sin(s\pi/n)}$, for $s\geq 2$ we have $\vert p_s(\xi(I_0))\vert\leq \alpha \vert p_1(\xi(I_0))\vert$ for some constant $\alpha$ only depending on $k/n$. Hence, for $n$ large enough,
$$\vert p_s(\xi(J))\vert\leq \left\vert p_1(\xi(J))\right\vert$$
for $2\leq s\leq k$. For $s=1$, the inequality is trivially satisfied, so that
\begin{align}
\left\vert \sum_{\substack{\lambda\vert l\\\lambda=1^{m_1}\ldots l^{m_{\ell}}\\\sum_{i=1}^{\ell} m_i\leq l-3}}(-1)^{\ell-\sum_{s=1}^{\ell} m_s}\frac{\prod_{s=1}^{\ell} p_s(x_1,\ldots,x_k)^{m_s}}{\prod_{s=1}^{\ell} m_s!s^{m_s}}\right\vert\leq&\sum_{\substack{\lambda\vert l\\\lambda=1^{m_1}\ldots l^{m_{\ell}}\\\sum_{i=1}^{\ell} m_i\leq l-3}}\frac{\left\vert p_1(\xi(J))\right\vert^{\sum_{s=1}^{\ell} m_s}}{\prod_{s=1}^{\ell} m_s!s^{m_s}}\nonumber\\
=&\left(\sum_{\substack{\lambda\vert l\\\lambda=1^{m_1}\ldots l^{m_{\ell}}}}-\sum_{\substack{\lambda\vert l\\\lambda=1^{m_1}\ldots l^{m_{\ell}}\\\sum_{i=1}^{\ell} m_i\geq l-2}}\right)\frac{\left\vert p_1(\xi(J))\right\vert^{\sum_{s=1}^{\ell} m_s}}{\prod_{s=1}^{\ell} m_s!s^{m_s}}\nonumber\\
=&\frac{1}{\ell!}\left(\sum_{\sigma\in S_{\ell}}p_1(\xi(J))^{l(\sigma)}-\sum_{\substack{\sigma\in S_{\ell}\\l(\sigma)\geq l-2}}\left\vert p_1(\xi(J))\right\vert^{l(\sigma)}\right).\label{eq:small_l_bound_el}
\end{align}
Using that the generating series of cycle lengths of the permutation group is $\sum_{\sigma\in S_{\ell}}\left\vert p_1(\xi(J))\right\vert^{l(\sigma)}=\prod_{i=0}^{\ell-1}(\left\vert p_1(\xi(J))\right\vert+i)$, we get, setting $x=\left\vert p_1(\xi(J))\right\vert$
\begin{align*}
\left(\sum_{\sigma\in S_{\ell}}x^{l(\sigma)}-\sum_{\substack{\sigma\in S_{\ell}\\l(\sigma)\geq l-2}}x^{l(\sigma)}\right)&=x^{l}\left(\prod_{i=0}^{\ell-1}\left(1+\frac{i}{x}\right)-\sum_{\substack{\sigma\in S_{\ell}\\l(\sigma)\geq l-2}}x^{l(\sigma)-l}\right).
\end{align*}
Then,
\begin{align*}
\prod_{i=0}^{\ell-1}\left(1+\frac{i}{x}\right)=&\exp(\sum_{i=0}^{\ell-1}\log(1+\frac{i}{x}))\\
=&\exp\left(\frac{1}{x}\sum_{i=0}^{\ell-1}i-\frac{1}{2x^2}\sum_{i=0}^{\ell-1}i^2+o\left(\frac{1}{x^3}\sum_{i=0}^{\ell-1}i^3\right)\right)\\
=&1+\frac{\ell(\ell-1)}{2x}+\frac{a_1(\ell)+a_2(\ell)}{x^2}+O\left(\frac{l^4}{x^3}\right).
\end{align*}
Since $\sum_{\substack{\sigma\in S_{\ell}\\l(\sigma)\geq \ell-2}}x^{l(\sigma)-\ell}=1+\frac{\ell(\ell-1)}{2x}+\frac{a_1(\ell)+a_2(\ell)}{x^2}$, we get 
$$\left(\sum_{\sigma\in S_{\ell}}x^{l(\sigma)-\ell}-\sum_{\substack{\sigma\in S_{\ell}\\l(\sigma)\geq l-2}}x^{l(\sigma)-\ell}\right)=O\left(\frac{\ell^4}{x^3}\right)=O\left(\frac{\log k^{s'}}{k^3}\right).$$
The statement of the lemma is then deduced by plugging the latter estimate in \eqref{eq:small_l_bound_el}. 
\end{proof}
\begin{proposition}\label{prop:case_J3_small}
When $\ell\leq \log n^7$ and $\omega\in\mathcal{J}_3$,
$$\lambda_{J_{\omega}}=\exp\left(-\frac{\gamma_{\ell}\vert \tau^{\tilde{\omega}}\vert}{n}(1+O(1/\log n))+iO\left(\gamma_{\ell}\vert \tau^{\omega}\vert\right)\right).$$
\end{proposition}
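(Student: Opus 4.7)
The strategy is to apply Lemma~\ref{lem:asymptotic_el_p1} to both $e_\ell(\xi(J_\omega))$ and $e_\ell(\xi(I_0))$, which reduces the computation of $\lambda^{\ell}_{J_\omega} = e_\ell(\xi(J_\omega))/e_\ell(\xi(I_0))$ to a comparison of the low-order power sums of the two configurations. The hypothesis of the lemma is satisfied because, for $\omega \in \mathcal{J}_3$, the supports of $\mu_{J_\omega}$ and $\mu_{I_0}$ differ at at most $2(\ell(\mu^\omega) + \ell(\nu^\omega)) = O(n/\log n) \ll k/\sqrt{\log k}$ atoms. Dividing the two expansions and factoring out the leading power of $p_1$ gives $\lambda^{\ell}_{J_\omega} = (p_1(\xi(J_\omega))/p_1(\xi(I_0)))^\ell (1 + E)$, where $E$ is a rational expression in the quotients $p_s/p_1^s$ ($s = 2, 3$) evaluated at $J_\omega$ and $I_0$, plus the $O(\log^{s'} k/k^3)$ remainder.

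For the dominant factor, I would write $p_1(\xi(J_\omega)) - p_1(\xi(I_0)) = \sum_{x \in \widehat{I_0}}(e^{iT^{J_\omega}(x)} - e^{ix})$. By the definition of $\mathcal{J}_3$, $T^{J_\omega}$ fixes every atom of $\widehat{I_0}$ at distance at least $C_2/\log n$ from $\pm k\pi/n$, while the remaining atoms near the extremes are shifted outward by amounts $(2\pi/n)\mu^\omega_i$ and $(2\pi/n)\nu^\omega_i$, all of order $O(n/\log n)$. Using $e^{2\pi i\mu/n} - 1 = (2\pi i\mu/n)(1 + O(\mu/n))$ and $e^{i\pi(k - 2i + 1)/n} = e^{ik\pi/n}(1 + O(i/n))$, a direct summation and division by $p_1(\xi(I_0)) = \sin(k\pi/n)/\sin(\pi/n)$ yield
\[
\frac{p_1(\xi(J_\omega)) - p_1(\xi(I_0))}{p_1(\xi(I_0))} = -\frac{2\pi^2 |\tau^\omega|}{n^2}\left(1 + O\!\left(\tfrac{1}{\log n}\right)\right) + i \cdot O\!\left(\frac{|\tau^\omega|}{n^2}\right).
\]
Multiplying by $\ell$, invoking the asymptotic $\gamma_\ell = (2\pi^2 \ell/n^2)(1 + O(1/n))$ from~\eqref{eq:expression_gammal} (valid for $\ell \leq \log^7 n \ll n$), and expanding the logarithm to first order produces the announced exponent $-\gamma_\ell |\tau^\omega|(1 + O(1/\log n)) + i\, O(\gamma_\ell |\tau^\omega|)$. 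The quadratic correction from the $\log(1 + x)$ expansion is of size $\ell (|\tau^\omega|/n^2)^2 = O(\gamma_\ell |\tau^\omega|\cdot |\tau^\omega|/n^2) = o(\gamma_\ell |\tau^\omega|/\log n)$, using $|\tau^\omega| \leq C n^2/\log^2 n$ from Section~\ref{subsec:summary_bounds_alpha}.

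For the correction factor $1 + E$, the same Lipschitz argument yields $p_s(\xi(J_\omega)) - p_s(\xi(I_0)) = O(s |\tau^\omega|/n)$ for $s = 2, 3$, while $p_s(\xi(I_0))/p_1(\xi(I_0))^s = O(n^{-(s-1)})$. The difference between the numerator and denominator $p_2/p_1^2$-corrections is therefore $\ell^2 \cdot O(|\tau^\omega|/n^3)$, and the contributions of the $p_3$ and $p_2^2$ terms (as well as the $O(\log^{s'} k/k^3)$ remainder) are of strictly smaller order. In particular $|E|$ is $o(1)$, and $|E|/(\gamma_\ell |\tau^\omega|/\log n) \leq C\ell (\log n)/n = o(1)$ in the regime $\ell \leq \log^7 n$, so that $1 + E = \exp(o(\gamma_\ell |\tau^\omega|/\log n))$ is absorbed into the error of the main exponent. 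The main obstacle will be the clean bookkeeping of these errors when $\gamma_\ell |\tau^\omega|$ is polylogarithmically large, where one cannot expand $\exp$ to first order; the safe route is to carry out all comparisons at the level of logarithms and verify that each relative error stays strictly smaller than $1/\log n$.
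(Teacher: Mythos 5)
Your proposal is correct and follows essentially the same route as the paper: invoke Lemma~\ref{lem:asymptotic_el_p1} for both $e_\ell(\xi(J_\omega))$ and $e_\ell(\xi(I_0))$, compute $p_1(\xi(J_\omega))/p_1(\xi(I_0))=1-\gamma_1\vert\tau^\omega\vert(1+O(1/\log n))+iO(\gamma_1\vert\tau^\omega\vert)$ from the partition description of $J_\omega$, raise to the power $\ell$ using $\gamma_\ell\asymp 2\pi^2\ell/n^2$, and show the $p_2,p_3,p_2^2$ corrections cancel between numerator and denominator up to $o(\gamma_\ell\vert\tau^\omega\vert/\log n)$. The only (harmless) deviations are cosmetic: you factor out $p_1^{\ell}$ instead of the paper's $p_1^{\ell-4}$ and control the surviving corrections by absolute differences of $p_s/p_1^s$ rather than the paper's relative bounds on $p_r(\xi(J))/p_r(\xi(I_0))$, which if anything is slightly more robust.
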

\begin{proof}
We suppose here $\ell\geq 4$, the proof being exactly similar for $l\leq 3$. Let $\omega\in \mathcal{J}_3$ and consider $J_{\omega}$ introduced in Section \ref{subsec:summary_bounds_alpha}. First, $d_{TV}(\mu_{J_{\omega}},\mu_{I_0})\leq C\frac{k}{\log k}$, using Lemma \ref{lem:asymptotic_el_p1} with ,
\begin{align}
&\frac{\alpha_{J_{\omega}}^{\ell} }{\alpha_{I_0}^{\ell} }=\frac{ p_1^{\ell-4}}{ q_1^{\ell-4}}\cdot\frac{p_1^4-\frac{\ell(\ell-1)}{2}p_1^{2}p_2+a_1(\ell)p_1p_3-a_2(\ell)p_2^2+ p_1^4O\left(\frac{\log k^{s'}}{k^3}\right)}{q_1^4-\frac{\ell(\ell-1)}{2}q_1^{2}q_2+a_1(\ell)q_1q_3-a_2(\ell)q_2^2+ q_1^4O\left(\frac{\log k^{s'}}{k^3}\right)},\label{eq:ratio_small_l}
\end{align}
where $p_i:=p_i(\xi(J_{\omega}))$, $q_i=p_i(\xi(I_0))$ for $1\leq i\leq 3$. Recall from Section \ref{subsec:summary_bounds_alpha} that $\hat{J}_{\omega}=Supp(\mu_{J_{\omega}})$ is decomposed as 
$$\hat{J}_{\omega}=\frac{2\pi}{n}\left\{\frac{k+1}{2}-j+\mu^{\omega}_j,1\leq j\leq \frac{k+1}{2}\right\}\sqcup \left\{\frac{2\pi}{n}\{-\frac{k+1}{2}+j-\nu^{\omega}_j,1\leq j< \frac{k+1}{2}\right\},$$
so that
$$p_1(\xi(J))=p_1(\xi(I_0))+\sum_{j=1}^{\frac{k+1}{2}}e^{\frac{2i\pi}{n} \left(\frac{k+1}{2}-j+\mu^{\omega}_j\right)}-e^{\frac{2i\pi}{n}\left(\frac{k+1}{2}-j\right)}+\sum_{j=1}^{\frac{k}{2} }e^{\frac{2i\pi}{n} \left(-\frac{k+1}{2}+j-\nu^{\omega}_j\right)}-e^{\frac{2\pi}{n}\left(-\frac{k+1}{2}+j\right)}.$$
Since $\mu^{\omega}_1\leq \frac{2C_2n}{\pi\log n}$,
\begin{align*}
e^{\frac{2i\pi}{n} \left(\frac{k+1}{2}-j+\mu^{\omega}_j\right)}-e^{\frac{2i\pi}{n}\left(\frac{k+1}{2}-j\right)}=&e^{2\pi i(j-(k-1)/2)/n}(2\pi i\mu^{\omega}_j/n(1+O(1/\log n)))\\
=&e^{i\pi k/n}\frac{2\pi i\mu^{\omega}_j}{n}(1+O(1/\log n)),
\end{align*}
with $O(\cdot)$ only depending on $c''$ and $O(\cdot)=0$ if $\mu^{\omega}_j=0$ and the same holds for $\nu^{\omega}$ Hence, since $\max(\ell(\mu^{\omega}),\ell(\nu^{\omega}))\leq \frac{2 C_2 n}{\pi\log n}$
\begin{align*}
p_1(\xi(J))=&p_1(\xi(I_0))+2\pi i\left[e^{i\pi k/n}\vert \lambda\vert-e^{-i\pi k/n}\vert \mu\vert\right]\left(1+O(1/\log n)\right)\\
=&\frac{\sin(k\pi/n)}{\sin(\pi/n)}+2\pi i\left[e^{i\pi k/n}\vert \lambda\vert-e^{-i\pi k/n}\vert \mu\vert\right]\left(1+O(1/\log n)\right)\\
=&p_1(\xi(I_0))\left(1-\gamma_1\vert \tau^{\omega}\vert(1+O(1/\log n))+iO\left(\gamma_1\vert \tau^{\omega}\vert\right)\right),
\end{align*}
with $\gamma_1=\frac{2\pi^2}{n^2 \sin\left(\frac{k\pi}{n}\right)}$ and $\vert \tau^{\omega}\vert=\vert \mu^{\omega}\vert+\vert \nu^{\omega}\vert$, so that 
\begin{equation}\label{eq:ratio_sum_small_l}
\frac{p_1(\xi(J))}{p_1(\xi(I_0))}=1-\frac{\gamma_1\vert \tau^{\omega}\vert}{n}\left(1+O\left(\frac{1}{\log n}\right)\right)+iO\left(\gamma_1\vert \tau^{\omega}\vert\right).
\end{equation}
One proves similarly that 
\begin{equation}\label{eq:ratio_power_sum_small_l}\frac{p_r(\xi(J))}{p_r(\xi(I_0))}=1+O\left(\frac{\gamma_1\vert \tau^{\omega}\vert}{n}\right)
\end{equation}
for $r=2,3$. 

First, by \eqref{eq:ratio_sum_small_l},
\begin{align*}
\frac{ p_1^{\ell-4}}{ q_1^{\ell-4}}=&\left( 1-\frac{\gamma_1\vert \tau^{\omega}\vert}{n}\left(1+O\left(\frac{1}{\log n}\right)\right)+iO\left(\gamma_1(\vert \tau^{\omega}\vert\right)\right)^{\ell-4}\\
=& \exp\left(-\frac{(\ell-4)\gamma_1\vert \tau^{\omega}\vert}{n}(1+O(1/\log n))+iO\left(\gamma_\ell(\vert \tau^{\omega}\vert\right)\right).
\end{align*}
Then, using \eqref{eq:ratio_small_l} and \eqref{eq:ratio_power_sum_small_l}, we get 
\begin{align*}
&p_1^4-\frac{\ell(\ell-1)}{2}p_1^{2}p_2+a_1(\ell)p_1p_3-a_2(\ell)p_2^2+ p_1^4O\left(\frac{\log k^{s'}}{k^3}\right)\\
&=q_1^4\left(1-\frac{4\gamma_1\vert \tau^{\omega}\vert}{n}\left(1+O\left(\frac{1}{\log n}\right)+4iO\left(\gamma_1\vert \tau^{\omega}\vert\right)\right)\right)\\
&\hspace{1cm}+\left[-\frac{\ell(\ell-1)}{2}q_1^{2}q_2+a_1(\ell)q_1q_3-a_2(\ell)q_2^2+ q_1^4O\left(\frac{\log k^{s'}}{k^3}\right)\right]\left(1+O\left(\frac{\gamma_1\vert \tau^{\omega}\vert}{n}\right)\right).
\end{align*}
Since 
\begin{equation}\label{eq:small_l_negligible_rest}
\left[-\frac{\ell(\ell-1)}{2}q_1^{2}q_2+a_1(\ell)q_1q_3-a_2(\ell)q_2^2+ q_1^4O\left(\frac{\log k^{s'}}{k^3}\right)\right]=q_1^4O\left(\frac{1}{\log n}\right),
\end{equation}
we thus have
\begin{align*}
&p_1^4-\frac{\ell(\ell-1)}{2}p_1^{2}p_2+a_1(\ell)p_1p_3-a_2(\ell)p_2^2+ p_1^4O\left(\frac{\log k^{s'}}{k^3}\right)\\
=&q_1^4\left(1-\frac{4\gamma_1\vert \tau^{\omega}\vert}{n}\left(1+O\left(\frac{1}{\log n}\right)\right)+4iO\left(\gamma_1\vert \tau^{\omega}\vert\right)\right),
\end{align*}
so that, using again \eqref{eq:small_l_negligible_rest},
\begin{align*}
\left\vert\frac{p_1^4-\frac{\ell(\ell-1)}{2}p_1^{2}p_2+a_1(\ell)p_1p_3-a_2(\ell)p_2^2+ p_1^4O\left(\frac{\log k^{s'}}{k^3}\right)}{q_1^4-\frac{\ell(\ell-1)}{2}q_1^{2}q_2+a_1(\ell)q_1q_3-a_2(\ell)q_2^2+ q_1^4O\left(\frac{\log k^{s'}}{k^3}\right)}\right\vert=&1-\frac{q_1^4\frac{4\gamma_1\vert \tau^{\omega}\vert}{n}\left(1+O\left(\frac{1}{\log n}\right)\right)}{q_1^4\left(1+O\left(\frac{1}{\log n}\right)\right)+4iO\left(\gamma_1\vert \tau^{\omega}\vert\right)}\\
=&1-\frac{4\gamma_1\vert \tau^{\omega}\vert}{n}\left(1+O\left(\frac{1}{\log n}\right)\right)\\
&\hspace{2cm}+4iO\left(\gamma_1\vert \tau^{\omega}\vert\right).
\end{align*}
Combining both estimates yields
\begin{align*}
\lambda_{J_{\omega}}=\frac{\alpha_J^{\ell} }{\alpha_{I_0}^{\ell} }=& \exp\left(-\frac{\ell\gamma_1\vert \tau^{\omega}\vert}{n}(1+O(1/\log n))+iO\left(\gamma_{\ell}\vert \tau^{\omega}\vert\right)\right)\\
=& \exp\left(-\frac{\gamma_{\ell}\vert \tau^{\omega}\vert}{n}(1+O(1/\log n))+iO\left(\gamma_{\ell}\vert \tau^{\omega}\vert\right)\right).
\end{align*}
\end{proof}
\begin{proposition}\label{prop:case_j2_small}
Let $\ell\leq \log n^s$. Let $\eta>0$. For $C_2$ large enough, for all $\omega\in\mathcal{J}_2$ there exists $\tilde{\omega}\in \mathcal{J}_3$ such that for any $t\geq t_0$, 
$$d(\omega)^{\frac{\gamma_{\ell}}{tk^2\log k}}\left\vert \lambda_{\omega}^{\ell} \right\vert\leq d(\tilde{\omega})^{\frac{\gamma_{\ell}}{tk^2\log k}}\exp\left(-\frac{lc_2}{tk\log k}\right)\left\vert\lambda^{\ell} _{\tilde{\omega}}\right\vert.$$
\end{proposition}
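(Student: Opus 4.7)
The plan is to mirror Proposition~\ref{prop:case_J2}, replacing the steepest-descent analysis of $\alpha^{\ell}_J$ by the direct power-sum expansion from Lemma~\ref{lem:asymptotic_el_p1}. Given $\omega\in\mathcal{J}_2$, I would first use Lemma~\ref{lem:J2_Wasserstein} to pick $J\in\omega$ with $d_{TV}(\mu_J,\mu_{I_0})\leq C k/\sqrt{\log k}$, observing that $N(J)\geq 1$ by the very definition of $\mathcal{J}_2$. I then iterate the elementary exchange of Lemma~\ref{lem:J2}: at each step swap an atom $y\in J\setminus I_0$ with an atom $x\in I_0\setminus J$ obeying the $C_2/\log n$-gap condition, so that $N$ drops by one. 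After $N(J)$ swaps one reaches $\tilde J\in\tilde\omega\in\mathcal{J}_3$, and $d_{TV}(\mu_{J'},\mu_{I_0})$ remains $O(k/\sqrt{\log k})$ at every stage $J'$ of the chain.

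Next, I would track the effect of a single swap on $p_1(\xi(\cdot))$: the contribution is $\Delta p_1=e^{ix}-e^{iy}$, and concavity of $\cos$ on $[0,\pi]$ gives $\Re(\Delta p_1)\geq c\,C_2/\log n$. Since $p_1(\xi(I_0))\asymp k$ is real positive and $|\Im p_1(\xi(J'))|\leq C k/\sqrt{\log k}$ throughout the iteration, accumulating the $N(J)\geq 1$ swaps yields
\[
\frac{|p_1(\xi(\tilde J))|}{|p_1(\xi(J))|}\;\geq\; 1+c\,\frac{N(J)\,C_2}{k\log n}\,.
\]
Applying Lemma~\ref{lem:asymptotic_el_p1} to both $J$ and $\tilde J$ exactly as in the proof of Proposition~\ref{prop:case_J3_small}, the ratio $\lambda_\omega^{\ell}/\lambda_{\tilde\omega}^{\ell}$ equals $(p_1(\xi(J))/p_1(\xi(\tilde J)))^\ell$ times a remainder built from $p_2,p_3$ which, for $\ell\leq (\log n)^s$ and $N(J)\leq Ck/\sqrt{\log k}$, is of multiplicative size $1+O(\ell^2 N(J)/k^2)$ and is therefore dominated by the swap gain. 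This yields
\[
\frac{|\lambda_\omega^{\ell}|}{|\lambda_{\tilde\omega}^{\ell}|}\;\leq\; \exp\!\left(-\,\ell\,\frac{c\,N(J)\,C_2}{2k\log n}\right),
\]
and choosing $C_2$ large enough (depending on $c_2$ and $t_0$), together with $\log n\asymp \log k$ and $N(J)\geq 1$, bounds this by $\exp(-2\ell c_2/(tk\log k))$ uniformly in $t\geq t_0$.

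The $d$-factor is controlled exactly as in Lemma~\ref{lem:J2}: the bound $d(J_i)/d(J_{i+1})\leq \kappa^k$ per swap telescopes to $d(\omega)/d(\tilde\omega)\leq \kappa^{kN(J)}$, so
\[
\Bigl(\frac{d(\omega)}{d(\tilde\omega)}\Bigr)^{\!\gamma_\ell/(tk^2\log k)}\leq \kappa^{N(J)\gamma_\ell/(tk\log k)}=\kappa^{O(\ell/(tk^2(\log k)^{3/2}))},
\]
using $\gamma_\ell\asymp \ell/k^2$ and $N(J)=O(k/\sqrt{\log k})$. Since this equals $1+O(\ell/(tk^2(\log k)^{3/2}))$, which is much smaller than $\exp(\ell c_2/(tk\log k))-1$, it is absorbed by the extra slack coming from the factor~$2$ in the previous display, producing the announced bound $\exp(-\ell c_2/(tk\log k))$. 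The main obstacle is to show that the $p_2,p_3$-corrections in Lemma~\ref{lem:asymptotic_el_p1} are stable enough under a single swap that their discrepancy between $J$ and $\tilde J$ is dominated by the linear gain of order $\ell/(k\log n)$ coming from $\Re(\Delta p_1)$; this is what forces the use of both the sharper $O(k/\sqrt{\log k})$ total-variation bound from Lemma~\ref{lem:J2_Wasserstein} and the uniform lower bound $|p_1(\xi(J'))|\asymp k$ preserved along the entire swap chain.
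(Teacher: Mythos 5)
Your overall architecture (swap chain reducing $N(J)$ by one per step, Newton-identity expansion of $e_\ell$ via Lemma~\ref{lem:asymptotic_el_p1}, and a $\kappa^{k}$-per-swap control of the $d$-factor) is the same as the paper's, but there is a genuine gap at the key accumulation step for $p_1$. You deduce $\vert p_1(\xi(\tilde J))\vert/\vert p_1(\xi(J))\vert\geq 1+cN(J)C_2/(k\log n)$ from the per-swap gain $\Re(e^{ix}-e^{iy})\geq cC_2/\log n$ together with the bound $\vert \Im p_1(\xi(J'))\vert\leq Ck/\sqrt{\log k}$ along the chain. This does not follow: writing $p_1(\xi(J'))=A+iB$ with $A\asymp k$ and only $\vert B\vert\leq Ck/\sqrt{\log k}$, a single swap changes the imaginary part by an uncontrolled amount of order $1$, so the change of $\vert p_1\vert^2$ contains a cross term $2B\,\Im(e^{ix}-e^{iy})$ of size up to $Ck/\sqrt{\log k}$, which dominates the real-part gain $2A\cdot cC_2/\log n\asymp kC_2/\log k$ (note $1/\sqrt{\log k}\gg C_2/\log k$). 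Hence $\vert p_1\vert$ may actually \emph{decrease} at some swaps, and even globally the discrepancy $\vert p_1(\xi(J))\vert-\Re p_1(\xi(J))$, which can be of order $B^2/A\asymp k/\log k$, is not beaten by the total gain $N(J)cC_2/\log n$ when $N(J)$ is small (e.g.\ $N(J)=1$). Since the comparison of $\vert\lambda^\ell_\omega\vert$ with $\vert\lambda^\ell_{\tilde\omega}\vert$ through Lemma~\ref{lem:asymptotic_el_p1} hinges precisely on this growth of $\vert p_1\vert$, the central estimate of your proof is unproven as stated.

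The missing device is the recentering used in the paper: after each swap the configuration is shifted by the $\mathbb{Z}$-action, $J^{(m+1)}=\tilde J+s\mathbf{1}_k$, so that $\arg p_1(\xi(J^{(m+1)}))\in\left]-\pi/(2n),\pi/(2n)\right]$; this leaves $\vert p_1\vert$, $d(\cdot)$ and $\vert\lambda^\ell_\cdot\vert$ unchanged but forces $\vert\Im p_1\vert=O(1)$ before every swap, so that the cross term is $O(1)$ and the inequality $\vert p_1(\xi(J^{(m+1)}))\vert\geq\vert p_1(\xi(J^{(m)}))\vert+c'C_2/\log n$ holds by induction (termination then being read off from $\vert p_1\vert\leq k$). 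A cheaper fix in your setup would be a single initial rotation of $J$ making $\vert\Im p_1(\xi(J))\vert=O(1)$, then bounding $\vert p_1(\xi(J))\vert\leq\Re p_1(\xi(J))+O(1/k)$ and $\vert p_1(\xi(\tilde J))\vert\geq\Re p_1(\xi(\tilde J))$, since the real part does accumulate the swap gains; either way some use of the shift invariance is indispensable, and without it your chain of inequalities breaks. The remaining ingredients of your proposal (domination of the $p_2,p_3$ corrections, whose variation is $O(\ell^2N(J)/k^2)$ versus a gain $\asymp\ell N(J)C_2/(k\log k)$, and the absorption of the $d$-factor after choosing $C_2$ large in terms of $c_2$, $t_0$ and $\log\kappa$) are sound and match the paper's computations.
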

\begin{proof}
Let $\omega\in\mathcal{J}_2$ and $J\in \omega$ satisfying 
$$ \int\mathbf{1}_{\vert T^J(x)\vert>k\pi/n+\frac{C_2}{\log n}}(\vert T^J(x)\vert-\vert x\vert)d\mu_{I_0}(x)\geq C_1\frac{k}{\log k}.$$
First, since $d_{TV}(\mu_J,\mu_{I_0})\leq \frac{C k}{\sqrt{\log k}}$ by Lemma \ref{lem:J2_Wasserstein}, 
\begin{equation}\label{eq:J_2_small_Re_lowerbound}\Re p_1(\xi(J))\geq \frac{\sin((k+1)\pi/n)}{\sin(\pi/n)}- \frac{C k}{\sqrt{\log k}}\geq ck
\end{equation}
for some constant $c>0$.  Recall that for $I\in B_{k,n}$,
$$N(I):=\#\{x\in I_0,\,\min(\vert T^I(x)\vert-k\pi/n,k\pi/n-\vert x\vert)\geq\frac{C_2}{\log n}\}$$
and that, when $N(I)=0$, there exists $\tilde{\omega}\in \mathcal{J}_3$ such that $I\in\tilde{\omega}$, see the definition of $\mathcal{J}_3$ in Section \ref{subsec:summary_bounds_alpha}.

Construct from $J$ a sequence $(J^{(m)})_{m\geq 0}$ of elements of $B_{k,n}$ as follows :
\begin{itemize}
\item $J^{(0)}=J$,
\item if $N(J^{(m)}>0$, let $(x,y)$ with $y\in \widehat{J^{(m)}}\setminus \widehat{I_0}, x\in \widehat{I_0}\setminus \widehat{J^{(m)}}$ and either $\vert y\vert\geq \pi k/n+\frac{C_2}{\log n}$ and $\vert x\vert\leq \frac{k\pi}{n}$,  or $\vert y\vert\geq \frac{k\pi}{n}$ and $\vert x\vert\leq \pi k/n-\frac{C_2}{\log n}$. Let $\tilde{J}$ be the configuration obtained by replacing $y$ by $x$. Then, set $J^{(m+1)}=\tilde{J}+s\mathbf{1}_k$ as the unique rotated version of $\tilde{J}$ such that $\arg p_1(\xi(J^{(m+1)}))\in ]-\frac{\pi}{2n},\frac{\pi}{2n}]$.
\item if $N(J^{(m)})=0$, $J^{(m+1)}=J^{(m)}$.
\end{itemize}
The second step simply stop the algorithm if it encounters some configuration belonging to some $\tilde{\omega}\in\mathcal{J}_3$. Let $\varrho=\min(n\geq 1, N(J^{(m)})=0)$, with the convention $\min \emptyset=+\infty$. Let us prove that  for $m<\varrho$,
\begin{enumerate}
\item $\vert p_1(\xi(J^{(m+1)}))\vert \geq \vert p_1(\xi(J^{(m)}))\vert+\frac{c'C_2}{\log n}\geq ck$,
\item $\vert d(J^{(m)})\vert \leq \kappa^k \vert d(J^{(m+1)})\vert$,
\end{enumerate}
for some $\kappa,c'>0$ independent of $n$ and $C_2$. Let us prove the first assumption by recursion on $0\leq n<\varrho$.
Let $n<\varrho$. Since $N(J^{(m)})>0$ there exists $(x,y)$ with $y\in \widehat{J^{(m)}}\setminus \widehat{I_0}, x\in \widehat{I_0}\setminus \widehat{J^{(m)}}$ and either $\vert y\vert\geq \pi k/n+\frac{C_2}{\log n}$ and $\vert x\vert\leq \frac{k\pi}{n}$,  or $\vert y\vert\geq \frac{k\pi}{n}$ and $\vert x\vert\leq \pi k/n-\frac{C_2}{\log n}$. Let $\tilde{J}$ be the configuration obtained by replacing $y$ by $x$. Either by \eqref{eq:J_2_small_Re_lowerbound} for the initialization or by induction otherwise, 
$$\Re p_1(\xi(J^{(m)}))\geq ck.$$
Then, since $\arg p_1(\xi(J^{(m)}))\in ]-\pi/(2n),\pi/(2n)]$, 
$$\vert \Im p_1(\xi(J^{(m)}))\vert \leq \epsilon$$
for some $\epsilon$ only depending on $k$ and $n$. Since $p_1(\xi(\tilde{J}))=p_1(\xi(J^{(m)}))+e^{ix}-e^{iy}$,
$$\Re p_1(\xi(\tilde{J}))\geq \Re p_1(\xi(J^{(m)}))+\frac{C}{\log n}.$$ Hence,
\begin{align*}
\left\vert p_1(\xi(\tilde{J}))\right\vert^2\geq \Re p_1(\xi(\tilde{J}))^2\geq \Re p_1(\xi(J^{(m)}))^2+2ck\frac{C_2}{\log n}\geq \left\vert p_1(\xi(J^{(m)}))\right\vert^2+2ck\frac{C_2}{\log n}-\epsilon ^2.
\end{align*}
Therefore, 
$$\left\vert p_1(\xi(\tilde{J}))\right\vert\geq\left\vert p_1(\xi(J^{(m)}))\right\vert^2+c\frac{C_2}{\log n}$$
for some $c>0$, and by rotation and induction,
$$\vert p_1(\xi(J^{(m+1)}))\vert \geq \vert p_1(\xi(J^{(m)}))\vert+\frac{c'C_2}{\log n}\geq ck,$$
where we used the induction hypothesis on the last inequality.
For the second assertion, let $l,l'$ be such that $\frac{2\pi}{n}\left(J^{(m)}_{\ell}-\frac{k+1}{2}\right)=x$ and $\frac{2\pi}{n}\left(\tilde{J}_{l'}-\frac{k+1}{2}\right)=y$. Since $\{J^{(m)}_s,s\not=l\}=\{\tilde{J}_s,s\not=l'\}$,
$$\left\vert \frac{d(\tilde{J})}{d(J^{(m)})}\right\vert =\frac{\prod_{s\not =l'} \sin(\pi \vert J^{(m)}_s-\tilde{J}_{l'}\vert/n)}{\prod_{s\not =l} \sin(\pi \vert J^{(m)}_s-J_{l}\vert/n)}\geq \prod_{s\not =l'} \sin(\pi \vert J^{(m)}_s-\tilde{J}_{l'}\vert/n)\geq \prod_{j=1}^{k/2}\sin(\pi j/n)^2.$$
Remark that 
$$\prod_{j=1}^{k/2}\sin(\pi j/n)^2=\exp\left(k\left(\frac{2}{k}\sum_{j=1}^{k/2} \log \sin(\pi j/n)\right)\right)=\exp\left(2n\left(\int_0^{1/2}\log\sin\left(\frac{k\pi}{n} t\right)dt+o(1)\right)\right)\geq \kappa^{-k}$$
for some universal constant $\kappa>0$. 
Hence, 
$$\vert d(J^{(m)})\vert\leq \kappa^{k}\vert d(\tilde{J})\vert.$$
Since $d(J^{m+1})=d(\tilde{J})$, the second assertion is also proven.

By (1), $\vert p_1(\xi(J^{(m)}))\vert\geq ck +m\frac{c'C_2}{\log n}$ for $m\leq \tau$. Since $\vert p_1(\xi(J^{(m)}))\leq k$, this implies that $\tau <+\infty$, and thus there exists $m_0$ such that $N(J^{(m_0)})=0$. Let $\tilde{\omega}\in\mathcal{J}_3$ be such that $J^{(m_0)}\in \tilde{\omega}$.

Then, by Lemma \ref{lem:asymptotic_el_p1},
$$\frac{\left\vert \lambda_{\tilde{\omega}}\right\vert}{\left\vert \lambda_{\omega}\right\vert}=\left\vert\frac{\alpha_{J^{(m_0)}}^{\ell} }{\alpha_{J}^{\ell} }\right\vert=\left\vert\frac{e_{\ell}(\xi(J^{(m_0)}))}{e_{\ell}(\xi(J))}\right\vert=\frac{\left\vert p_1(\xi(J^{(m_0)}))^{\ell-2}\left(p_1(\xi(J^{(m_0)}))^2-\frac{\ell(\ell-1)}{2}p_2(\xi(J^{(m_0)}))+O\left(\frac{\log k^s}{k^2}\right)\right)\right\vert}{\left\vert p_1(\xi(J))^{\ell-2}\left(p_1(\xi(J))^2-\frac{\ell(\ell-1)}{2}p_2(\xi(J))+O\left(\frac{\log k^s}{k^2}\right)\right)\right\vert}.$$
Since $\vert p_2(\xi(J^{(m_0)})-p_2(\xi(J))\vert \leq m_0$ and $p_1(\xi(J))\geq ck$,
\begin{align*}
p_1(\xi(J^{(m_0)}))^2-\frac{\ell(\ell-1)}{2}p_2(\xi(J^{(m_0)}))\geq& (p_1(\xi(J))+\frac{mc'C_2}{\log n})^2-\frac{\ell(\ell-1)}{2}(p_2(\xi(J))+m)\\
\geq& p_1(\xi(J))^2-\frac{\ell(\ell-1)}{2}p_2(\xi(J)) +2p_1(\xi(J))\frac{mc'C_2}{\log n}-\frac{\ell(\ell-1)}{2}m\\
\geq &p_1(\xi(J))^2-\frac{\ell(\ell-1)}{2}p_2(\xi(J)) +p_1(\xi(J))\frac{mc'C_2}{\log n}
\end{align*}
for $n$ large enough, where we used that $\ell\leq\log k^s$ in the last inequality. Finally, there exists $c>0$ such that 
$$\left\vert \lambda^{\ell} _{\omega}\right\vert\leq \left(1-\frac{mcC_2}{k\log k}\right)^{\ell} \left\vert \lambda^{\ell} _{\tilde{\omega}}\right\vert.$$
Since $d(\tilde{\omega})\geq \kappa^{km}d(\omega)$ by $(2)$ the latter inequality yields for $t\geq t_0$
$$d(\omega)^{\frac{ \ell}{tk^2\log k}}\left\vert \lambda_{\omega}^{\ell} \right\vert\leq e^{ \frac{m \ell}{k\log k}\left(\frac{\log(\theta)}{t_0}-cC_2\right)}d(\tilde{\omega})^{\frac{\ell}{tk^2\log k}}\left\vert \lambda_{\tilde{\omega}}^{\ell} \right\vert,$$
and, for $c_2>0$, choosing $C_2$ large enough (only depending on $t_0$ and $\theta$ and $c$) yields 
$$d(\omega)^{\frac{\ell}{t k^2\log k}}\left\vert \lambda_{\omega}^{\ell} \right\vert\leq e^{-\frac{cl}{k\log k}}d(\tilde{\omega})^{\frac{\ell}{t k^2\log k}}\left\vert \lambda_{\tilde{\omega}}^{\ell} \right\vert$$
for $t\geq t_0$. Since $\gamma_{\ell}=\frac{2\pi^2\ell}{k^2}\left(1+O\left(\frac{\log ^sn}{n}\right)\right)$, the result is deduced.
\end{proof}

\begin{lemma}\label{lem:subgroup}
For all $J$ and for all $s\in\llbracket 1,\ell\rrbracket$, we have
\[
\left|p_s\left(\xi(J)\right)\right|\leq \left|p_1\left(\xi(I_0)\right)\right|\left(1+O\left(\frac{s}{n}\right)\right)\, .
\]
\end{lemma}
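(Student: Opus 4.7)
The plan is to exploit the structure of $|p_s(\xi(J))|$ as a sum of $n/d$-th roots of unity, where $d = \gcd(s,n)$, maximize over admissible configurations, and then compare with the closed form of $|p_1(\xi(I_0))|$.

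Write $p_s(\xi(J)) = \omega^{-s(k-1)/2}\sum_{j=1}^k \omega^{sJ(j)}$, so $|p_s(\xi(J))| = |\sum_j \omega^{sJ(j)}|$. Setting $d=\gcd(s,n)$ and $n'=n/d$, the value $\omega^{sJ(j)}$ depends on $J(j)$ only modulo $n'$, and at most $d$ of the $J(j)$'s can share a given residue modulo $n'$. Grouping by residue and reindexing (using that $\omega^s$ is a primitive $n'$-th root of unity, so the set $\{\omega^{sr}\}_r$ is a permutation of $\{e^{2i\pi r/n'}\}_r$), I can express
\[
|p_s(\xi(J))|=\left|\sum_{r=0}^{n'-1}c_r\,e^{2i\pi r/n'}\right|,\quad c_r\in\{0,1,\dots,d\},\quad \sum_r c_r=k.
\]

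Next I would maximize over admissible $c$. Since $c\mapsto|\sum_r c_r e^{2i\pi r/n'}|$ is convex on the polytope $\{c\in[0,d]^{n'}:\sum c_r=k\}$, the maximum is attained at an extreme point. Such a point has $q=\lfloor k/d\rfloor$ coordinates equal to $d$, one coordinate equal to $u=k-qd\in[0,d)$, and the rest zero. Applying the triangle inequality and the standard bound $|\sum_{r\in A}e^{2i\pi r/n'}|\le \sin(|A|\pi/n')/\sin(\pi/n')$ (attained when $A$ is a consecutive arc, by the geometric series identity), the maximum is at most
\[
d\,\frac{\sin(q\pi/n')}{\sin(\pi/n')}+u=d\,\frac{\sin((k-u)\pi/n)}{\sin(d\pi/n)}+u.
\]
Dividing by $|p_1(\xi(I_0))|=\sin(k\pi/n)/\sin(\pi/n)$ and Taylor expanding, the factor $d\sin(\pi/n)/\sin(d\pi/n)$ equals $1+O(d^2/n^2)=1+O(s/n)$, valid since $d\le s\le k/2\le (1-\eta)n/2$ keeps $d\pi/n$ bounded away from $\pi$. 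Expanding $\sin((k-u)\pi/n)=\sin(k\pi/n)\cos(u\pi/n)-\cos(k\pi/n)\sin(u\pi/n)$ and using Assumption \ref{ass:number-particles} to lower-bound $\sin(k\pi/n)\ge \sin(\eta\pi)$, the ratio $\sin((k-u)\pi/n)/\sin(k\pi/n)$ equals $1+O(u/n)=1+O(s/n)$, while the additive $u$-term contributes $u\sin(\pi/n)/\sin(k\pi/n)=O(s/n)$ for the same reason. Combining yields $|p_s(\xi(J))|/|p_1(\xi(I_0))|\le 1+O(s/n)$.

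The main obstacle will be the extremal step: identifying the extreme points of the polytope and showing that, at each such point, the ``consecutive arc'' placement controls $|\sum c_r e^{2i\pi r/n'}|$. The convexity-plus-rearrangement argument is classical but must be executed carefully, especially the triangle inequality that removes the $u$-entry from the geometric part. Once the extremal bound is in place, the remaining computation is a routine Taylor expansion invoking Assumption \ref{ass:number-particles} to keep $\sin(k\pi/n)$ bounded below by a constant depending only on $\eta$.
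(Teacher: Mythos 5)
Your proof is correct and follows essentially the same route as the paper: both reduce $p_s(\xi(J))$, via the subgroup generated by $\omega^s$ (equivalently $d=\gcd(s,n)$, the paper's $s'$), to a sum over the $n/d$-th roots of unity with integer weights at most $d$ summing to $k$, and then bound this by $d$ times a consecutive-arc geometric sum before comparing with $\left|p_1(\xi(I_0))\right|=\sin\left(\tfrac{k\pi}{n}\right)/\sin\left(\tfrac{\pi}{n}\right)$. The only real difference is that the paper asserts the extremal ``arc'' inequality directly, whereas you justify it by convexity/extreme points plus the rearrangement fact and handle the leftover weight $u$ by the triangle inequality --- a valid, slightly more detailed execution of the same step.
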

\begin{proof}[Proof of Lemma~\ref{lem:subgroup}]
Let $\Phi$ be the group morphism which maps $\omega\in U_n$ to $\omega^s$, where $U_n$ is the cyclic group of order $n$. The image $\Phi(U_n)$ is a subgroup of $U_n$, so it is of the form $\{\omega^{is'}\}_{0\leq i <r}$, for some $s'\leq s$ and $r=n/s'$. The kernel of $\Phi$ is also a subgroup, with size $s'$. Hence, since each $\omega^{J_i s}$ is in the image of $\Phi$,
\begin{align*}
p_s\left(\xi(J)\right)&= \sum_{i=1}^k \omega^{J_i s}=\sum_{i=0}^{r-1}d_i\omega^{is'}\, ,
\end{align*}
with $d_i\in\llbracket 0,s'\rrbracket$. Now since $\sum_{i=0}^{r-1}d_i=k$, we have
\[
\left|p_s\left(\xi(J)\right)\right|\leq  s'\left|\sum_{i=0}^{k/s'}\omega^{s'i}\right|=\frac{s'\sin\left(\frac{(k+s')\pi}{n}\right)}{\sin\left(\frac{s'\pi}{n}\right)}=\frac{n\sin\left(\frac{k\pi}{n}\right)}{\pi}\left(1+O\left(\frac{s}{n}\right)\right)\, .
\]
The result then follows from the fact that
\[
\left|p_1\left(\xi(I_0)\right)\right|=\frac{n\sin\left(\frac{k\pi}{n}\right)}{\pi}\left(1+O\left(\frac{1}{n}\right)\right)\, .
\]
\end{proof}

\begin{lemma}\label{lem:bound_J_1_small}
If there exists $C_1'>0$ such that $\ell\leq C_1' (\log k)^2$, then for all $\omega\in\mathcal{J}_1$, 
\[
\left\vert \lambda^{\ell}_{\omega}\right\vert=\left(1+O\left(\frac{\ell}{\sqrt{k}}\right)\right)e^{-\frac{C_1n^2\gamma_{\ell}}{2\pi^2\log k}}\, .
\]
\end{lemma}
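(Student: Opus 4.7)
The strategy is to exploit the sub-logarithmic bound $\ell\leq C_1'(\log k)^2$ to reduce $\lambda_\omega^\ell=e_\ell(\xi(J))/e_\ell(\xi(I_0))$ to a power of the leading Newton sum via Lemma \ref{lem:asymptotic_el_p1}, and then to control this ratio using the transport map underlying the definition of $\mathcal{J}_1$.

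First, I pick a representative $J\in\omega$ by cyclic rotation so that $\arg p_1(\xi(J))\in(-\pi/n,\pi/n]$. A Pigeon-hole count on the $\mathcal{J}_1$ defining inequality (analogous to the proof of Lemma \ref{lem:J2_Wasserstein}: an $x$ contributing to the restricted integral satisfies $|T^J(x)|-|x|\geq C_2/\log n$, bounding the number of outlier atoms of $\mu_J$) gives $d_{TV}(\mu_J,\mu_{I_0})=O(k/\sqrt{\log k})$, so Lemma \ref{lem:asymptotic_el_p1} applies. Combined with Lemma \ref{lem:subgroup} (which bounds $|p_s(\xi(J))|\leq |p_1(\xi(I_0))|(1+O(s/n))\asymp k$ for $s=1,2,3$), it yields
\[
\lambda_\omega^\ell=\left(\frac{p_1(\xi(J))}{p_1(\xi(I_0))}\right)^\ell\bigl(1+O(\ell^2/k)\bigr)=\left(\frac{p_1(\xi(J))}{p_1(\xi(I_0))}\right)^\ell\bigl(1+O(\ell/\sqrt{k})\bigr),
\]
the error being absorbed since $\ell^2/k=O(\ell/\sqrt{k})$ for $\ell\leq C_1'(\log k)^2$. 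The rotation also gives $|p_1(\xi(J))|=\Re\,p_1(\xi(J))(1+O(1/n^2))$.

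Next, taking the monotone optimal transport $T:\widehat{I_0}\to\widehat{J}$ of Lemma \ref{lem:increasing_transport_map},
\[
p_1(\xi(I_0))-\Re\,p_1(\xi(J))=\sum_{x\in\widehat{I_0}}\bigl[\cos|x|-\cos|T(x)|\bigr],
\]
each summand nonnegative. I restrict to the $x$'s with $|T(x)|>k\pi/n+C_2/\log n$; the monotonicity $|T(x)|\geq|x|$ forces these to be the outermost atoms of $\widehat{I_0}$, and a count bounds their number by $O(k/\log k)$, so their moduli cluster within $O(1/\log k)$ of $k\pi/n$. A first-order mean-value expansion (with the quadratic remainder controlled against the $\mathcal{J}_1$ first-order bound via a Cauchy--Schwarz estimate) gives $\cos|x|-\cos|T(x)|\geq \sin(k\pi/n)(|T(x)|-|x|)(1+o(1))$, and combining with the $\mathcal{J}_1$-hypothesis $\sum(|T(x)|-|x|)\geq C_1 k/\log k$ produces
\[
p_1(\xi(I_0))-\Re\,p_1(\xi(J))\geq \sin(k\pi/n)\cdot\frac{C_1 k}{\log k}(1+o(1)).
\]
Dividing by $p_1(\xi(I_0))=\sin(k\pi/n)/\sin(\pi/n)$ and invoking the asymptotic $n^2\gamma_\ell/(2\pi^2)=\ell\sin((k-\ell)\pi/n)/\sin(k\pi/n)(1+O(1/n))=\ell(1+O(\ell/k))$ from \eqref{eq:expression_gammal}, I identify the resulting linear decrement of $|p_1(\xi(J))/p_1(\xi(I_0))|$ with $C_1 n^2\gamma_\ell/(2\pi^2\ell\log k)$; raising to the power $\ell$ and folding the remaining corrections into the multiplicative factor $(1+O(\ell/\sqrt{k}))$ yields the stated bound.

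\textbf{Main obstacle.} The delicate point is the transport-map step: extracting the sharp sine factor $\sin(k\pi/n)$ in the lower bound on $\cos|x|-\cos|T(x)|$ requires both the monotonicity of $T$ (forcing the $\mathcal{J}_1$-active atoms $x$ to concentrate near $\pm k\pi/n$) and a uniform control of the quadratic remainder against the $\mathcal{J}_1$-hypothesis. Matching the precise constant in the target exponential $C_1 n^2\gamma_\ell/(2\pi^2\log k)$, via the explicit form of $\gamma_\ell$ and the normalization of $p_1(\xi(I_0))$, is the most technical part of the argument.
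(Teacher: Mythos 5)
Your proposal hinges on reducing $\lambda^{\ell}_\omega$ to $\bigl(p_1(\xi(J))/p_1(\xi(I_0))\bigr)^{\ell}\,(1+O(\ell/\sqrt k))$ via Lemma \ref{lem:asymptotic_el_p1}, but that reduction is not available on $\mathcal{J}_1$. First, the hypothesis $d_{TV}(\mu_J,\mu_{I_0})\leq Ck/\sqrt{\log k}$ cannot be extracted from the $\mathcal{J}_1$ condition: the pigeonhole argument of Lemma \ref{lem:J2_Wasserstein} uses the \emph{upper} bound $\int\mathbf{1}_{|T^J(x)|>k\pi/n+C_2/\log n}(|T^J(x)|-|x|)\,d\mu_{I_0}\leq C_1k/\log k$, which holds for $\mathcal{J}_2\subset\mathcal{J}\setminus\mathcal{J}_1$, whereas for $\omega\in\mathcal{J}_1$ the restricted transport cost is only bounded \emph{below}, and configurations with $\Theta(k)$ atoms outside $[-k\pi/n,k\pi/n]$ (e.g.\ nearly equispaced particles on the whole circle) lie in $\mathcal{J}_1$ with $d_{TV}(\mu_J,\mu_{I_0})\asymp k$. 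Second, even where the expansion of Lemma \ref{lem:asymptotic_el_p1} is written, factoring out $p_1^{\ell}$ and declaring the remainder a $(1+O(\ell^2/k))$ factor requires a lower bound $|p_1(\xi(J))|\gtrsim k$ (as the paper carefully establishes for $\mathcal{J}_2$ in Proposition \ref{prop:case_j2_small}); on $\mathcal{J}_1$, $|p_1(\xi(J))|$ can be $o(k)$ or even vanish, so the terms in $p_2,p_3$ are not subordinate to $p_1^{\ell}$ and your multiplicative error is unjustified. There is also a quantitative flaw in the transport step: the linearization $\cos|x|-\cos|T(x)|\geq \sin(k\pi/n)\,(|T(x)|-|x|)(1+o(1))$ fails once $|T(x)|$ is far beyond $k\pi/n$ (take $k/n=1/2$, $|x|=\pi/2$, $|T(x)|$ near $\pi$: the left side is $1$ while the right side is near $\pi/2$); only a lower bound with a constant depending on $\eta$ holds, which is why the paper's large-$\ell$ treatment (Lemma \ref{lem:Case_J1}) carries an unspecified constant $\kappa$. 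Likewise the claim that the active atoms number $O(k/\log k)$ and cluster near $\pm k\pi/n$ is not forced by the $\mathcal{J}_1$ inequality.

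The paper's proof takes a different, upper-bound-only route precisely to avoid needing any closeness of $J$ to $I_0$: it expands $e_\ell(\xi(J))$ by Newton's identities, bounds every power sum uniformly by $\Lambda=|p_1(\xi(I_0))|(1+O(\ell/n))$ via Lemma \ref{lem:subgroup}, splits the sum over permutations according to the number of cycles, uses $|p_1(\xi(J))|\leq(1-C_1/\log k)|p_1(\xi(I_0))|$ only on the permutations with at least $\ell-c$ cycles (which have many fixed points), and kills the remaining permutations by the crude count $\Lambda^{\ell}(\ell^2/\Lambda)^{c}$ with $c$ fixed large. If you want to salvage your strategy, you would have to first prove a genuine upper bound $|p_1(\xi(J))|\leq(1-cC_1/\log k)|p_1(\xi(I_0))|$ for all $\omega\in\mathcal{J}_1$ (via the transport map with a constant depending only on $\eta$) and then control the non-$p_1^{\ell}$ Newton terms without any lower bound on $|p_1(\xi(J))|$ — which is essentially what the paper's cycle-count decomposition accomplishes.
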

\begin{proof}[Proof of Lemma~\ref{lem:bound_J_1_small}]
We first observe that for $\ell\leq c (\log k)^2$, we have
\[
\left|e_\ell\left(\xi(I_0)\right)\right|=\frac{\prod_{j=k-\ell+1}^k \sin\left(\frac{\pi j}{n}\right)}{\prod_{j=1}^\ell \sin\left(\frac{\pi j}{n}\right)}=\left(1+O\left(\frac{\ell^2}{n}\right)\right)\frac{1}{\ell !}\left(\frac{\sin\left(\frac{\pi k}{n}\right)}{\sin\left(\frac{\pi }{n}\right)}\right)^\ell=\left(1+O\left(\frac{\ell^2}{n}\right)\right)\frac{1}{\ell !} \left|p_1\left(\xi(I_0)\right)\right|^\ell\, .
\]
On the other hand, using Newton's identities and the triangle inequality, we have
\[
\left|e_\ell\left(\xi(J)\right)\right|\leq \frac{1}{\ell !}\sum_{\sigma\in S_\ell}\prod_{s=1}^\ell \left|p_s\left(\xi(J)\right)\right|^{m_s}\, ,
\]
where $m_s$ denotes the number of cycles of length~$s$ in permutation~$\sigma$. Using Lemma~\ref{lem:subgroup}, we get
\[
\ell ! \left|e_\ell\left(\xi(J)\right)\right|\leq \sum_{\sigma\in S_\ell}\left|p_1\left(\xi(J)\right)\right|^{m_1} \Lambda^{\sum_{s=2}^\ell m_s}\, ,
\]
where $\Lambda=\left|p_1\left(\xi(I_0)\right)\right|\left(1+O\left(\frac{\ell}{n}\right)\right)$. In the sum above, we consider separately the terms corresponding to permutations $\sigma$ with $C(\sigma)\geq \ell -c$ and with $C(\sigma)<\ell -c$, where $C(\sigma)=\sum_{s=1}^\ell m_s$ is the total number of cycles of~$\sigma$, and where $c$ is a fixed integer to be specified later. First, if $C(\sigma)\geq \ell -c$, then there at least $\ell-2c$ fixed points. Using that there are less than $\ell^{r}$ permutations with $\ell-r$ fixed points, we thus have
\[
\sum_{\substack{\sigma\in S_\ell\\ C(\sigma)\geq \ell -c}}\left|p_1\left(\xi(J)\right)\right|^{m_1} \Lambda^{\sum_{s=2}^\ell m_s}\leq \sum_{r=0}^{2c}\ell^r \left|p_1\left(\xi(J)\right)\right|^{\ell-r} \Lambda^{r/2}\, .
\]
Now, if $J\in\mathcal{J}_1$, then $\left|p_1\left(\xi(J)\right)\right|\leq \left(1-\frac{C_1}{\log k}\right)\left|p_1\left(\xi(I_0)\right)\right|\leq \left(1-\frac{C_1}{\log k}\right)\Lambda$, which gives
\begin{align*}
\sum_{\substack{\sigma\in S_\ell\\ C(\sigma)\geq \ell -c}}\left|p_1\left(\xi(J)\right)\right|^{m_1} \Lambda^{\sum_{s=2}^\ell m_s} &\leq \Lambda^\ell e^{-\frac{C_1\ell}{\log k}}\sum_{r=0}^{2c} \left(\frac{\ell}{\sqrt{\Lambda}\left(1-\frac{C_1}{\log k}\right)}\right)^r\\
&\leq \Lambda^\ell e^{-\frac{C_1\ell}{\log k}} \frac{1}{1-\frac{2\ell}{\sqrt{\Lambda}}}=\Lambda^\ell e^{-\frac{C_1\ell}{\log k}} \left(1+O\left(\frac{\ell}{\sqrt{\Lambda}}\right)\right)
\end{align*}
Moreover, for the terms corresponding to $C(\sigma)<\ell -c$, we have
\begin{align*}
\sum_{\substack{\sigma\in S_\ell\\ C(\sigma)< \ell -c}}\left|p_1\left(\xi(J)\right)\right|^{m_1} \Lambda^{\sum_{s=2}^\ell m_s} &\leq \sum_{\substack{\sigma\in S_\ell\\ C(\sigma)< \ell -c}} \Lambda^{C(\sigma)} \\
&\leq \sum_{j=1}^{\ell-c-1}\Lambda^j{\ell \choose j}\ell^{\ell -j}\, ,
\end{align*}
where we used the fact that $\sum_{\sigma \in S_\ell}\Lambda^{C(\sigma)}=\prod_{i=0}^{\ell -1}(\Lambda+i)$, so that the coefficient in front of $\Lambda^j$ is less than ${\ell \choose j}\ell^{\ell -j}$. Therefore,
\begin{align*}
\sum_{\substack{\sigma\in S_\ell\\ C(\sigma)< \ell -c}} \Lambda^{C(\sigma)}&\leq \Lambda^\ell \sum_{j=1}^{\ell-c-1}{\ell \choose j}\left(\frac{\ell}{\Lambda}\right)^{\ell-j}\\
&=  \Lambda^\ell \sum_{j=c+1}^{\ell-1} {\ell \choose j} \left(\frac{\ell}{\Lambda}\right)^{j}\\
&\leq \Lambda^\ell \sum_{j=c+1}^{\ell-1} \left(\frac{\ell^2}{\Lambda}\right)^{j}\leq  \Lambda^\ell \left(\frac{\ell^2}{\Lambda}\right)^{c}\, \cdot
\end{align*}
Note that taking $c$ large enough with respect to $C_1$ and $C_1'$, we can always ensure that $\left(\frac{\ell^2}{\Lambda}\right)^{c}\leq \tfrac{1}{\Lambda}e^{-\frac{C_1\ell}{\log k}}$. All in all, we obtain
\[
\frac{\left|e_\ell\left(\xi(J)\right)\right|}{\left|e_\ell\left(\xi(I_0)\right)\right|}\leq \left(1+O\left(\frac{\ell}{\sqrt{\Lambda}}\right)\right)e^{-\frac{C_1\ell}{\log k}}\, .
\]
Since $\gamma_{\ell}=\frac{2\pi^2\ell}{n^2}\left(1+O\left(\frac{\log k^2}{n}\right)\right)$ by \eqref{eq:expression_gammal}, the result is deduced.
\end{proof}

\bibliographystyle{abbrvnat}
\bibliography{biblio}

\end{document}